\numberwithin{equation}{section}
\newtheorem{theorem}{Theorem}[section]
\newtheorem{corollary}[theorem]{Corollary}
\newtheorem{lemma}[theorem]{Lemma}
\newtheorem{proposition}[theorem]{Proposition}
\theoremstyle{definition}
\newtheorem{definition}[theorem]{Definition}
\renewcommand{\leq}{\leqslant}
\renewcommand{\geq}{\geqslant}
\def\N{\mathbb{N}}
\def\Z{\mathbb{Z}}
\def\R{\mathbb{R}}
\newcommand{\norm}[1]{\left\lVert#1\right\rVert}
\title{A Fisher-KPP model with \\ a fast
	diffusion line in periodic media\thanks{It is a pleasure
		to thank Luca Rossi and Henri Berestycki
		for very interesting discussions.}}
\author{Elisa Affili\thanks{Dipartimento di Matematica, Universit\`a degli studi di Milano,
		Via Saldini 50, 20133 Milan, Italy, and 
		Centre d'Analyse et de Math\'ematique Sociales,
		\'Ecole des Hautes \'Etudes en Sciences Sociales,
		54 Boulevard Raspail,
		75006 Paris, France. {\tt elisa.affili@unimi.it}}}
\begin{document}

\maketitle

\begin{abstract}
We treat a model of population dynamics in a periodic environment presenting a fast diffusion line. The
``road-field'' model, introduced in \cite{brr}, is a system of coupled reaction-diffusion equations set in domains of different dimensions. Here, we consider for the first time the case of a reaction term depending on a spatial variable in a periodic fashion, which is of great interest for both its mathematical difficulties and for its applications. We derive necessary and sufficient conditions for the survival of the species in terms of the sign of a suitable generalised principal eigenvalue, defined recently in \cite{romain}. Moreover, we compare the long time behaviour of a population in the same environment without the fast diffusion line, finding  that this element has no impact on the survival chances.
\end{abstract}

{\bf Keywords:} KPP equations, reaction-diffusion system, line with fast diffusion, generalized principal eigenvalue, periodic media, periodic nonlinearity, heterogeneous models.

\vspace{0.5em}

{\bf 2010 Mathematics Subject Classification:} 35K57, 92D25, 35B40, 35K40, 35B53, 35B10, 35J35.

\section{Setting and main results}

This paper investigates some effects of a fast diffusion line in an ecological dynamics problem.
Various examples in the literature showed that, in the presence of roads or trade lines, some species or infections spread faster along these lines, and then diffuse in the surroundings. This was observed in the case of the Processionary caterpillar, whose spreading in France and Europe has been accelerated by accidental human transport \cite{robinet2012human}. Another striking proof was given in  \cite{gatto2020spread}, where the authors point out that the COVID-19 epidemics in Northern Italy at the beginning of 2020 diffused faster along the highways.  

A model for biological diffusion in a homogeneous medium presenting a fast diffusion line was proposed by Berestycki, Roquejoffre and Rossi in \cite{brr}, and since then is called the \emph{road-field model}. The authors proved an acceleration effect due to the road on the spreading speed of an invading species. 
Since then, a growing number of articles treated variations of the same system, investigating in particular the effect of different type of diffusion or different geometries \cite{berestycki2014speed, berestycki2015effect, rossi2017effect}.

However, natural environments are usually far from being homogeneous and, more often than not, territories are a composition of different habitats. Living conditions and heterogeneity play a strong impact on the survival chances of a species and on the equilibria at which the population can settle. 

Road-field models on heterogeneous environments have been  little studied so far, being more complex to treat. One of the few example is the paper \cite{giletti2015kpp} for periodic exchange terms between the population on the road and the one in the field. Recently, Berestycki, Ducasse and Rossi introduced a notion of generalised principal eigenvalue for the road-field system in \cite{romain} and, thanks to it, they were able to treat the case of an ecological niche facing climate change in \cite{econiches}. 

Here, we propose an analysis of the asymptotic behaviour of an invasive population under the assumption of spatial periodicity of the reaction term. Of course, under this hypothesis we can investigate deeper the dependence of the population on a natural-like environment and the effects of the road in this balance.
Under which conditions does the population survive in a periodic medium? And does the road play some role on the survival chances of a species, perturbing the environment and scattering the individuals, or rather permitting them to reach advantageous zones more easily?
These are the questions we are going to tackle.

\subsection{The model}

In this paper, we study the reaction-diffusion model regulating the dynamics of a population living in a periodic environment with a fast diffusion channel. The equivalent of this model for homogeneous media was first introduced by Berestycki, Roquejoffre and Rossi in \cite{brr}. Consider the half plane $\Omega:=\R\times \R^+$, where we mean $\R^+=(0, +\infty)$.
The proposed model imposes the diffusion of a species in $\Omega$ and prescribes that on $\partial \Omega=\R\times \{ y=0\}$ the population diffuses at a different speed. 
We call $v(x,t)$ the density of population for $(x,y)\in\Omega$, hence on the ``field'', and $u(x)$ the density of population for $x\in\R$, i.e. on the ``road''; moreover, we take $D$, $d$, $\nu$, $\mu$ positive constants and $c\geq 0$. Then, the system we analyse reads
\begin{equation}\label{ch1sys:fieldroad}
\left\{
\begin{array}{lr}
\partial_t u-D  u '' -c u' - \nu  v|_{y=0} + \mu u= 0,   & x\in \R,  \\
\partial_t v -d \Delta v-c\partial_x v  =f(x,v),  & (x, y)\in \Omega, \\
-d  \partial_y{v}|_{y=0} + \nu v|_{y=0} -\mu u=0, & x\in\R.
\end{array}
\right.
\end{equation}
In $\Omega$, the population evolves with a net birth-death rate represented by $f$, that depends on the variable $x$. 
This embodies the heterogeneity of the media: in fact, environments are typically not uniform and some zone are more favourable than others. 
There is no dependence in the variable $y$, since the presence of the road itself creates enough heterogeneity in that direction.
The function  $f:\R\times \R_{\geq 0}\to \R $
is always supposed to be $\mathcal{C}^{1}$ in $x$, locally in $v$, and Lipschitz in $v$, uniformly in $x$; moreover, we suppose that the value $v=0$ is an equilibrium, that is
\begin{equation}\label{ch1hyp:0}
f(x,0)=0, \quad \text{for all} \ x\in \R,
\end{equation}
and that 		
\begin{equation}\label{ch1hyp:M}
\exists M>0 \ \text{such that} \ f(x, v)<0 \quad \text{for all} \ v>M \ \text{and all} \ x\in \R. 
\end{equation}
We will derive some inequalities on the generalised principal eigenvalue of \eqref{ch1sys:fieldroad} for the general case of $f$ respecting these hypothesis and 
$c$ possibly nonzero.

The characterisation of extinction or persistence of the species is performed for the case of $c=0$ and $f$ a periodic function, reflecting the periodicity of the environment in which the population diffuses. 
We will analyse the  case of a KPP nonlinearity, that is, we require that 
\begin{equation}\label{ch1hyp:KPP}
\frac{f(x,s_2)}{s_2}< \frac{f(x,s_1)}{s_1} \quad  \text{for all} \ s_2>s_1>0 \  \text{and all} \ x\in\R.
\end{equation}
Then, we suppose that there exists $\ell> 0$ such that
\begin{equation}\label{ch1hyp:per}
f(x+\ell, s)=f(x,s) \quad \text{for all} \ s >0 \  \text{and all} \ x\in\R.
\end{equation}


To study the effect of the line of fast diffusion, we will compare the behaviour of \eqref{ch1sys:fieldroad} to the one of the system
\begin{equation}\label{ch1sys:symmetric}
\left\{
\begin{array}{ll}
v_t-d\Delta v - c\partial_x v= f(x,v),  & (x,y)\in\Omega,\\
-\partial_y v|_{y=0} =0, & x\in\R,
\end{array}
\right.
\end{equation}
whose solution is a function $v(x,y)$ that can be extended by symmetry to the whole plane, thanks to the Neumann border condition.
It is natural to consider system \eqref{ch1sys:symmetric} as the counterpart of system \eqref{ch1sys:fieldroad} in the case without the road, since it presents the same geometry, including the same boundary condition exception made for the exchange terms that are in place for the case of a fast diffusion channel.

\subsection{State of the art}

We present here the background that led us consider system \eqref{ch1sys:fieldroad} and some useful results that are known in the community.

The study of reaction-diffusion equations started with the works by Fisher \cite{fisher} and by Kolmogorov, Petrowskii and Piskunov \cite{KPP}, who modelled the spacial diffusion of an advantageous gene in a population living in a one-dimensional environment through the equation
\begin{equation}\label{ch1eq:KPP}
\partial_t v -d \, \partial_{xx}^2 v = f(v) 
\end{equation}
for $x\in\R$ and $t\geq 0$. For \eqref{ch1eq:KPP}, it is supposed that $d>0$ and $f\geq 0$ is a $\mathcal{C}^1$ function satisfying $f(0)=f(1)=0$ and the KPP hypothesis $f(v)\leq f'(0)v$ for $v\in[0,1]$. The first example was a nonlinearity of logistic type, so $f(v)= av(1-v)$ for some $a>0$.
It was shown any solution $v$ issued from a nonnegative initial datum $v_0$ converges to 1 as $t$ goes to infinity, locally uniformly in space; this long time behaviour is called \emph{invasion}. 
The generalisation in higher dimension of equation \eqref{ch1eq:KPP} was then used to study the spatial diffusion of animals, plants, bacteria and epidemics \cite{skellam, okubo1980diffusion}.

A vast literature has been originated from the pioneer works, studying various aspects of the homogeneous equation \eqref{ch1eq:KPP}, in particular concerning the \emph{travelling fronts}. These are solutions of the form $v(t,x)= V(x \cdot e +ct)$ with $V:\R\to[0,1]$, for $e$ a direction, the \emph{direction of propagation},  and $c$ the \emph{speed of propagation of the travelling front}. Other than this, researchers have investigated the \emph{asymptotic speed of propagation} at which level sets of a solution starting from $v_0$ expands. These topics arose already in \cite{fisher} and \cite{KPP}, and their investigation was continued in many interesting articles, among which \cite{fife} and \cite{weinberger2}.

The correspondence of the theoretical results with actual data as seen in \cite{skellam} was encouraging, however it was clear that natural environments, even at macroscopic levels, were not well represented by a homogeneous medium, due to the alternation of forests, cultivated fields, plains, scrubs and many other habitats, as well as roads, rivers and other barriers \cite{kinezaki2003modeling}. It was necessary to look at more sophisticated features, as the effects of inhomogeneity, fragmentation, barriers and fast diffusion channels, and on the top of that, climate change.

A first analysis was carried out 
in \cite{shigesada1986traveling, shigesada1997biological} and 
in \cite{kinezaki2003modeling} for the so-called the \emph{patch model}. The authors considered a periodic mosaic of two different homogeneous habitats, one favorable and one unfavorable for the invading species. 
In \cite{kinezaki2003modeling}, the authors studied the long time behaviour of the population starting from any nonnegative initial datum. For further convenience, let us give the following definition:
\begin{definition}\label{ch1def:pers_extin}
	For the equation of \eqref{ch1eq:KPP} or the system \eqref{ch1sys:fieldroad}, we say that 
	\begin{enumerate}
		\item \emph{extinction} occurs if any solution starting from a non negative bounded initial datum converges to $0$ or to $(0,0)$  uniformly as $t$ goes to infinity.
		\item \emph{persistence} occurs if any solution starting from a non negative, non zero, bounded initial datum converges to a positive  stationary solution locally uniformly as $t$ goes to infinity.
	\end{enumerate}
\end{definition}

In \cite{kinezaki2003modeling}, it was empirically shown that the stability of the trivial solution $v=0$ determines the long time behaviour of the solutions. A solid mathematical framework for a general periodic environment was given in \cite{bhroques}. There, the authors considered the equation
\begin{equation}\label{ch1eq:bhroques}
\partial_t v - \nabla \cdot (A(x)\cdot \nabla v) = f(x, v)
\end{equation}
for $x\in \R^N$ and $t\geq 0$. 
The diffusion matrix $A(x)$ is supposed to be $\mathcal{C}^{1, \alpha}$, uniformly elliptic and periodic; however, for our interest we can suppose $A(x)=d\, I_{N}$, where $I_N$ is the identity matrix. The nonlinearity $f: \R^N \times \R_{\geq0} \to \R$ is supposed to be $\mathcal{C}^{1}$ in $x$, locally in $v$, and Lipshitz in $v$, uniformly in $x$, respecting hypothesis \eqref{ch1hyp:0}-\eqref{ch1hyp:KPP} and such that for some $L=(L_1, \dots, L_N)$, with $L_i\geq 0$, it holds
\begin{equation}\label{ch1hyp:per'}
f(x+L,s)=f(x,s) \quad \text{for all} \  s\geq 0 \ \text{and all} \ x\in\R^N.
\end{equation}

The criterion for persistence or extinction is given via a notion of periodic eigenvalue, that is the unique number $\lambda_p(-\mathcal{L}, \R^N)$
such that there exists a solution $\psi\in W_{loc}^{2, p}(\R^N)$ to the system
\begin{equation}\label{ch1sys:L_RN_p}
\left\{
\begin{array}{ll}
\mathcal{L'}(\psi) + \lambda \psi = 0, & x\in\R^N, \\
\psi> 0, &  x\in\R^N, \\
|| \psi ||_{\infty}=1, \\
\psi \ \text{is periodic in $x$ of periods $L$},
\end{array}
\right.
\end{equation}
where $\mathcal{L'}$ is given by
\begin{equation}\label{ch1def:mathcal_L'}
\mathcal{L'}(\psi):=  d \Delta \psi  + f_v(x,0)\psi.
\end{equation}
We point out that the existence and uniqueness of $\lambda_p(-\mathcal{L}, \R^N)$ is guaranteed by Krein-Rutman theory. The long time behaviour result in \cite{bhroques} is the following:

\begin{theorem}[Theorem 2.6 in \cite{bhroques}]\label{ch1thm:2.6inbhroques}
	Assume $f$ satisfies \eqref{ch1hyp:0}-\eqref{ch1hyp:KPP} and \eqref{ch1hyp:per'}. Then: 
	\begin{enumerate}
		\item If $\lambda_p(-\mathcal{L'}, \R^N)<0$, persistence occurs for \eqref{ch1eq:bhroques}.
		\item If $\lambda_p(-\mathcal{L'}, \R^N)\geq 0$, extinction occurs for \eqref{ch1eq:bhroques}.
	\end{enumerate}
\end{theorem}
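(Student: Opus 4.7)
The plan is to leverage the KPP hypothesis \eqref{ch1hyp:KPP}, which implies $f(x,v)\leq f_v(x,0)v$ for all $v\geq 0$ while $f(x,s)/s\to f_v(x,0)$ as $s\to 0^+$, in order to transfer the stability information encoded in $\lambda_p(-\mathcal{L'},\R^N)$ from the linearised problem at $v=0$ to the full nonlinear equation. Multiples of the periodic principal eigenfunction $\psi$ of \eqref{ch1sys:L_RN_p} serve as sub- and supersolutions, and the parabolic comparison principle (valid since $f$ is Lipschitz in $v$) does the rest.

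\textbf{Extinction, $\lambda_p\geq 0$.} Set $w(t,x):=Ke^{-\lambda_p t}\psi(x)$. A direct computation using \eqref{ch1sys:L_RN_p} shows $\partial_t w-d\Delta w-f_v(x,0)w=0$, and by KPP $\partial_t w-d\Delta w-f(x,w)\geq 0$, so $w$ is a supersolution of \eqref{ch1eq:bhroques}. Since $\inf\psi>0$ by positivity and periodicity, for any bounded $v_0\geq 0$ one can choose $K$ with $K\psi\geq v_0$; comparison gives $0\leq v(t,x)\leq Ke^{-\lambda_p t}\|\psi\|_\infty$, yielding uniform decay when $\lambda_p>0$. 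When $\lambda_p=0$, $K\psi$ is a stationary supersolution and the solution $\overline v$ issued from it is periodic in $x$, nonincreasing in $t$, and converges to a periodic bounded nonnegative stationary solution $q$. Testing $-d\Delta q=f(x,q)$ against $\psi$ on a period cell and using $-d\Delta\psi=(f_v(x,0)+\lambda_p)\psi$ yields
\begin{equation*}
\lambda_p\int_{\mathrm{cell}}\psi q\,dx=\int_{\mathrm{cell}}\psi q\Bigl(\frac{f(x,q)}{q}-f_v(x,0)\Bigr)dx;
\end{equation*}
the left-hand side vanishes while the integrand on the right is strictly negative wherever $q>0$ by the strict KPP inequality, forcing $q\equiv 0$. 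Dini's theorem on the period cell combined with periodicity upgrades pointwise to uniform convergence, and comparison $v\leq\overline v$ concludes.

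\textbf{Persistence, $\lambda_p<0$.} The constant $K>M$ is a stationary supersolution by \eqref{ch1hyp:M}. For a subsolution, test $\varepsilon\psi$:
\begin{equation*}
d\Delta(\varepsilon\psi)+f(x,\varepsilon\psi)=\varepsilon\psi\Bigl(\frac{f(x,\varepsilon\psi)}{\varepsilon\psi}-f_v(x,0)\Bigr)-\lambda_p\varepsilon\psi,
\end{equation*}
which is nonnegative for $\varepsilon$ small, since $f(x,s)/s\to f_v(x,0)$ uniformly in $x$ by KPP and periodicity while $-\lambda_p>0$. Monotone iteration between the periodic subsolution $\varepsilon\psi$ and the periodic supersolution $K$ yields a positive bounded periodic stationary solution $p$ with $\varepsilon\psi\leq p\leq K$.

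\textbf{Convergence to $p$ and main obstacle.} For any bounded $v_0\geq 0$, $v_0\not\equiv 0$, the upper bound comes from comparison with the solution issued from the constant $\max(M,\|v_0\|_\infty)$, which is nonincreasing in $t$ and converges to a positive periodic stationary solution. The lower bound uses the strong maximum principle (giving $v(t_1,\cdot)>0$) together with the KPP spreading property, which produces, on any prescribed ball, a lower bound of the form $\varepsilon\psi$ after some further waiting time; the solution issued from $\varepsilon\psi$ is nondecreasing in $t$ and converges to a positive periodic stationary solution. The main obstacle is uniqueness of the positive bounded periodic stationary solution, which identifies both limits with $p$. This is the classical Berestycki sliding argument: given two such solutions $p_1,p_2$, the largest $\tau\in(0,1]$ with $\tau p_2\leq p_1$ must equal $1$ — otherwise strict KPP at a contact point (existing by compactness from periodicity) combined with the strong maximum principle produces a contradiction — and symmetry yields $p_1=p_2$. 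A squeezing argument then gives the claimed locally uniform convergence.
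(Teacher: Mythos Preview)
The paper does not prove this theorem: it is quoted from \cite{bhroques} and used as a black box, so there is no in-paper proof to compare against. I will assess your argument on its own.

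Your extinction argument is correct and clean, including the integral identity that disposes of the critical case $\lambda_p=0$.

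Your persistence argument has a genuine gap in the lower barrier. You want to trap $v$ from below by the solution issued from the global periodic function $\varepsilon\psi$. But $\inf_{\R^N}\varepsilon\psi>0$, whereas for a generic bounded initial datum $v_0$ (say compactly supported) one has $v(t,x)\to 0$ as $|x|\to\infty$ for every finite $t$; the inequality $v(t,\cdot)\geq\varepsilon\psi$ can therefore never hold on all of $\R^N$. The phrase ``KPP spreading property, which produces, on any prescribed ball, a lower bound of the form $\varepsilon\psi$'' does not rescue this: either you mean $\varepsilon\psi$ globally (impossible as above), or $\varepsilon\psi$ truncated to a ball and extended by zero (not a subsolution, so the solution issued from it is not monotone), or you are invoking spreading results that are themselves consequences of the persistence statement you are proving.

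The standard repair, and the route in \cite{bhroques}, is to replace $\varepsilon\psi$ by a \emph{compactly supported} subsolution. Since $\lambda_p<0$ and $\mathcal{L'}$ is self-adjoint, the Dirichlet principal eigenvalue on a sufficiently large ball $B_R$ is also negative, and the corresponding eigenfunction $\varphi_R$, extended by zero, gives for small $\varepsilon$ a genuine subsolution. After any positive time the strong maximum principle yields $v(1,\cdot)>0$, hence $v(1,\cdot)\geq\varepsilon\varphi_R(\cdot-kL)$ for any period shift $kL$ and $\varepsilon$ small enough. This fix, however, forces you to strengthen your uniqueness statement: the monotone limits obtained from the translated $\varepsilon\varphi_R$ are positive bounded stationary solutions that are not a priori periodic, so uniqueness among \emph{periodic} stationary solutions (where your ``contact point by compactness'' works) is insufficient. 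One must prove uniqueness among all positive bounded stationary solutions, which requires first showing that any such solution has positive infimum (via sliding the compactly supported subsolutions in space, exactly as in Lemma~\ref{ch1lemma:pos_inf} of the present paper) and then running the $\tau$-argument with sequences $x_n\to\infty$ and compactness up to period-translation, rather than a touching point on a single cell.
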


To prove Theorem \ref{ch1thm:2.6inbhroques}, the authors performed an analysis of $\lambda_p(-\mathcal{L}, \R^N)$, proving that it coincide with the limit of eigenvalues for a sequence of domains invading $\R^N$, so that it coincides with the generalised principal eigenvalue of the system ``without the road'' \eqref{ch1sys:symmetric}. Nowadays, that and many other properties of this eigenvalue can be found as part of a broader framework in \cite{br}. In Section \ref{ch1s:eigenvalues}, we will provide further comments on it.

Another important fact highlighted both in the series in \cite{shigesada1986traveling, shigesada1997biological, kinezaki2003modeling} and in \cite{bhroques} is that the presence of multiple small unfavourable zones gives less chances of survival than one large one, the surface being equal. 

\medskip

A new difficulty that one may consider while studying ecological problems is, sadly, the issue of a changing climate. A 1-dimensional model in this sense was first proposed in
\cite{berestycki2009can} and \cite{potapov2004climate}, and was later treated in higher dimension in \cite{br2}. 
The authors first imagined that a population lives in a favourable region enclosed into a disadvantageous environment; due to the climate change, the favourable zone starts to move in one direction, but keeps the same surface. The resulting equation is 
\begin{equation}\label{ch11709}
\partial_t v -  \Delta v=f(x-ct e,v) \quad \text{for} \ x\in\R^N,
\end{equation}
with $e$ a direction in $\mathbb{S}^{N-1}$ and $f: \R^N\times \R_{\geq 0} \to \R$. It was observed that a solution to \eqref{ch11709} in the form of a travelling wave $v(x,t)=V(x-cte)$ solves the equation
\begin{equation}\label{ch1eq:cc}
\partial_t V -  \Delta V- c\,e\cdot \nabla V=f(x,V) \quad \text{for} \ x\in\R^N,
\end{equation}
which is more treatable. The main question is if the population keeps pace with the shifting climate, that is, if the species is able to migrate with the same speed of the climate. The answer to this question is positive if a solution to \eqref{ch1eq:cc} exists; this depends on the value of $c$. We point out that already in \cite{br2} the authors considered the general case of a possible periodic $f(x,v)$.

\medskip

As mentioned before, another feature worth investigation is the effect of fast diffusion channels on the survival and the spreading of species. In fact, the propagation of invasive species as well as epidemics is influenced by the presence of roads \cite{robinet2012human, gatto2020spread}.
This observations led Berestycki, Roquejoffre and Rossi to propose a model for ecological diffusion in the presence of a fast diffusion channel in \cite{brr}, the so-called \emph{road-field model}. The field is modelled with the halfplane $\Omega=\R \times \R_{+}$ and the line with the $x$ axis; the main idea is to use two different variables for modelling the density of population along the line, $u$, and on the half plane, $v$. The system reads
\begin{equation*} 
\left\{
\begin{array}{ll}
\partial_t u(x,t) - D \partial_{xx}^2 u (x,t) = \nu v (x,0,t) - \mu u(x,t), &  x\in \R, t > 0, \\
\partial_t v(x,y,t) - d \Delta v (x,y,t)= f(v), & (x,y) \in \Omega, t>0, \\
-d \partial_y v(x,0,t) = -\nu v(x,0,t) + \mu u(x,t), & x \in \R, t>0, 
\end{array} \right.
\end{equation*}
for $D$, $d$, $\nu$, $\mu$ positive constants; moreover, $f\in \mathcal{C}^1$ was supposed to satisfy
\begin{equation*}
f(0)=f(1)=0, \quad 0< f(s) < f'(0)s \ \text{for} \ s \in (0,1), \quad f(s)<0 \ \text{for}  \ s>1.
\end{equation*}
The three equations describe, respectively, the dynamic on the line, the dynamic on the half plane and the exchanges of population between the line and the half plane. On the line, the diffusion is faster than in  $\Omega $ if $D>d$. 
In \cite{brr}, the authors identify the unique positive stationary solution $\left(\frac{1}{\mu}, 1 \right)$ and prove persistence of the population. 
Moreover, they show that the presence of the line increases the spreading speed. Another version of the model with a reaction term for the line was presented by the same authors in \cite{BRR2}, while many variation of the models were proposed by other authors: with nonlocal exchanges in the direction of the road \cite{pauthier2015uniform, pauthier2016influence}, with nonlocal diffusion \cite{berestycki2015effect, berestycki2014speed}, and with different geometric settings \cite{rossi2017effect}. For a complete list, we refer to \cite{tellini2019comparison}.

The case of heterogeneous media for systems of road-field type has been so far not much treated, due to its difficulties. A first road-field model with exchange terms that are periodic in the direction of the road was proposed in \cite{giletti2015kpp}. There, the authors 
recovered the results of persistence and of acceleration on the propagation speed due to the road known in the homogeneous case;
they also studied the spreading of solution with exponentially decaying initial data and calculated their speeds.

Recently, Berestycki, Ducasse and Rossi introduced in \cite{romain} a new generalised principal eigenvalue fitting road-field system for possibly heterogeneous reaction term; here, we give its definition directly for the system \eqref{ch1sys:fieldroad}.  
Calling
\begin{equation}\label{ch1sys:operators}
\left\{
\begin{array}{l}
\mathcal{R}(\phi, \psi):=D \phi''+c \phi'+\nu {\psi}|_{y=0}-\mu \phi, \\
\mathcal{L}(\psi):= d\Delta \psi +c \partial_x \psi -f_v(x,0)\psi, \\
B(\phi, \psi):=d \partial_y {\psi}|_{y=0}+\mu \phi- \nu {\psi}|_{y=0},
\end{array}
\right.
\end{equation}
this eigenvalue is defined as 
\begin{equation}\label{ch1def:lambda1_S_Omega}
\begin{split}
\lambda_1( \Omega)=\sup \{ \lambda \in \R \ : \ \exists (\phi, \psi)\geq (0,0), \ (\phi, \psi) \not\equiv(0,0), \ \text{such that} \\ \mathcal{L}(\psi) + \lambda \psi \leq 0 \ \text{in} \ \Omega, \ \mathcal{R}(\phi, \psi) +\lambda \phi \leq 0  
\ \text{and} \ B(\phi, \psi)\leq 0 \ \text{in} \ \R \},
\end{split}
\end{equation}
with $(\phi, \psi)$ belonging to $W_{loc}^{2,3}(\R)\times W_{loc}^{2,3}(\overline{\Omega})$. Together with the definition, many interesting properties and bounds were studied; we will recall some of them later.

Thanks to that, the same authors were able to investigate the case of 
a favourable ecological niche, possibly facing climate change, in \cite{econiches}. It was proven that the sign of $\lambda_1( \Omega)$ characterises
the extinction or the persistence of the population; moreover, comparing the results with  the ones found for the model without the road, a deleterious effect of the road on the survival chances is always found when there is no climate change. On the other hand, if the ecological niche shifts, the road has in some cases a positive effect on the persistence.

\subsection{Main results}

We are now ready to present the main results of this paper.

\subsubsection{The case of a periodic $f(x,v)$}

Here, we consider the case of a nonlinearity that respects the KPP hypothesis and is periodic in the direction of the road. Moreover, here we always consider $c=0$.

We begin by the following result on long time behaviour for solutions of system \eqref{ch1sys:fieldroad}:

\begin{theorem} \label{ch1thm:char}
	Assume $f$ satisfy \eqref{ch1hyp:0}-\eqref{ch1hyp:per}, $c=0$ and let $\lambda_1(\Omega)$ be as in \eqref{ch1def:lambda1_S_Omega}.
	Then the following holds:
	\begin{enumerate}
		\item if $\lambda_1( \Omega)\geq 0$, then  extinction occurs.
		\item if $\lambda_1(\Omega)<0$, then persistence occurs and the  positive stationary solution $(u_{\infty}, v_{\infty})$ is unique and periodic in $x$.
	\end{enumerate} 
\end{theorem}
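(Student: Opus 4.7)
The plan is to adapt the classical KPP dichotomy---stability of the trivial state detected by the sign of the principal eigenvalue---to the road-field setting with periodic coefficients. First I would construct a positive $\ell$-periodic eigenpair for $\lambda_1(\Omega)$: under \eqref{ch1hyp:per} I expect to find $(\phi_p, \psi_p) > 0$, $\ell$-periodic in $x$, solving $\mathcal{L}(\psi_p) + \lambda_1 \psi_p = 0$ in $\Omega$, $\mathcal{R}(\phi_p, \psi_p) + \lambda_1 \phi_p = 0$ and $B(\phi_p, \psi_p) = 0$ on $\R$. This is done by solving the principal eigenvalue problem on the periodic cell $(0,\ell)\times\R^+$ (periodic BC in $x$, exchange BC at $y=0$) via Krein--Rutman, the required decay in $y$ being obtained as in \cite{romain}; one then identifies the resulting periodic eigenvalue with $\lambda_1(\Omega)$ by testing $(\phi_p, \psi_p)$ in \eqref{ch1def:lambda1_S_Omega} and applying the periodic-approximation characterisation available from \cite{romain}. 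A uniform a priori upper bound is then provided by \eqref{ch1hyp:M}: the constant pair $(\nu M/\mu, M)$ is a stationary supersolution of \eqref{ch1sys:fieldroad} for $M \geq M_0$, and parabolic comparison for the road-field system bounds every solution issued from bounded non-negative data.

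For the extinction case, assume first $\lambda_1 > 0$. Given $(u_0, v_0)$ bounded and non-negative, choose $K$ large so that $(u_0, v_0) \leq K(\phi_p, \psi_p)$. The KPP condition \eqref{ch1hyp:KPP} gives $f(x,v) \leq f_v(x, 0) v$, so $(u, v)$ is a subsolution of the linearisation of \eqref{ch1sys:fieldroad} at $0$, while $Ke^{-\lambda_1 t}(\phi_p, \psi_p)$ solves that linear system exactly; comparison yields $(u, v)(\cdot, t) \leq Ke^{-\lambda_1 t}(\phi_p, \psi_p) \to 0$ uniformly. The borderline case $\lambda_1 = 0$ is handled by showing that no positive periodic stationary solution can exist: if $(u_s, v_s)$ were one, it would be a positive periodic eigenpair, at eigenvalue $0$, for the linearisation of \eqref{ch1sys:fieldroad} at $(u_s, v_s)$, whose reaction coefficient is $f(x, v_s)/v_s$. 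Strict KPP gives $f(x, v_s)/v_s < f_v(x, 0)$, so standard monotonicity of the principal eigenvalue with respect to the zero-order coefficient forces $0 > \lambda_1(\Omega)$, contradicting $\lambda_1 = 0$. Hence the $\ell$-periodic, time-monotone decreasing solution starting from the constant supersolution converges to $0$, and extinction for arbitrary non-negative bounded data follows by comparison.

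In the persistence regime $\lambda_1 < 0$, for $\varepsilon > 0$ small the pair $\varepsilon(\phi_p, \psi_p)$ is a strict subsolution of \eqref{ch1sys:fieldroad}: inserting it into the nonlinear system and using $f(x, \varepsilon \psi_p) = f_v(x, 0)\varepsilon \psi_p + o(\varepsilon)$ together with $\lambda_1 < 0$, the eigenfunction equations leave a residual of sign $-\lambda_1 \varepsilon \psi_p$ which absorbs the nonlinear $o(\varepsilon)$. The solution starting from $\varepsilon(\phi_p, \psi_p)$ is then time-monotone increasing, $\ell$-periodic in $x$, and bounded above by the constant supersolution, so it converges as $t \to \infty$ to a positive $\ell$-periodic stationary solution $(u_\infty, v_\infty)$. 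Uniqueness is proven by a sliding argument: for two positive periodic stationary solutions $(u_1, v_1), (u_2, v_2)$, set $\kappa^* := \inf\{\kappa > 0 : \kappa(u_1, v_1) \geq (u_2, v_2)\}$, which is finite by periodicity; if $\kappa^* > 1$, a touching point together with strict KPP and the strong maximum principle applied to $\kappa^*(u_1, v_1) - (u_2, v_2)$ (the Hopf lemma at $y = 0$ handling the exchange BC, as in \cite{brr, romain}) yields a contradiction, so $\kappa^* \leq 1$ and by symmetry the two solutions coincide.

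Finally one needs the locally uniform convergence $(u, v)(\cdot, t) \to (u_\infty, v_\infty)$ for arbitrary non-negative non-zero bounded initial data, which does not follow directly from monotonicity since $(u_0, v_0)$ is neither monotone in $t$ nor $\ell$-periodic in $x$. The upper bound $\limsup_{t \to \infty} (u, v)(\cdot, t) \leq (u_\infty, v_\infty)$ is obtained by comparison with the solution of \eqref{ch1sys:fieldroad} issued from the constant supersolution, which decreases to $(u_\infty, v_\infty)$ by the previous step and uniqueness. For the matching lower bound, after an arbitrarily small time $t_0 > 0$ the strong maximum principle makes $(u, v)(\cdot, t_0)$ strictly positive on every compact set; a spreading / hair-trigger argument in the spirit of \cite{brr, econiches} then ensures that, for $t$ large enough, $(u, v)(\cdot, t) \geq \varepsilon(\phi_p, \psi_p)$ on arbitrarily large compact sets of $\overline{\Omega}$, after which comparison with the increasing solution from $\varepsilon(\phi_p, \psi_p)$ gives $\liminf_{t \to \infty}(u, v)(\cdot, t) \geq (u_\infty, v_\infty)$ locally uniformly. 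I expect the main obstacles to be precisely (i) the rigorous construction of the periodic eigenpair and its identification with $\lambda_1(\Omega)$ on the half-space with the exchange boundary condition, and (ii) the spreading step that propagates initial local positivity along the road so as to produce the lower bound on arbitrarily large compact sets.
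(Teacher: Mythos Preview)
Your overall architecture matches the paper's: periodic eigenpair, sub/supersolution comparison, monotone iteration, sliding for uniqueness, sandwich for general data. But there is one structural issue you underestimate, and it infects several steps: the domain $\Omega=\R\times(0,\infty)$ is unbounded in $y$, and nothing in your construction controls the periodic road--field eigenfunction $\psi_p$ as $y\to\infty$.

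Concretely, in the extinction case you write ``choose $K$ large so that $(u_0,v_0)\le K(\phi_p,\psi_p)$'' and then compare with $Ke^{-\lambda_1 t}(\phi_p,\psi_p)$. This requires $\inf_\Omega\psi_p>0$, which is not established; the paper in fact treats both the case where the periodic eigenfunction is bounded and the case where it is unbounded in $y$. Their supersolution is not $K\psi_p$ but a composite $\eta v_p+\varepsilon\chi(y)\psi_p^{1D}$, where $\psi_p^{1D}$ is the one-dimensional periodic eigenfunction of $-d\psi''-f_v(x,0)\psi$ extended constant in $y$ and $\chi$ is a cutoff; this is what guarantees a positive lower bound on the whole half-plane. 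After building this supersolution they still have to argue the \emph{uniform} (not just locally uniform) convergence to $(0,0)$ separately, by a compactness argument on translates $v(t,x+x_n,y+y_n)$ with $y_n\to\infty$, invoking the field-only result (their Theorem~\ref{ch1thm:2.6inbhroques}).

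The same unbounded-$y$ issue reappears in your uniqueness sliding: even between two $\ell$-periodic-in-$x$ stationary solutions, the touching sequence for $\kappa^*$ can escape to $y=\infty$, so ``a touching point together with the strong maximum principle'' is not enough. The paper first proves a separate lemma (their Lemma~\ref{ch1lemma:pos_inf}) that any positive bounded stationary solution has $\inf_\Omega v>0$, via a second sliding in the $y$-direction using the field-only periodic eigenfunction on strips $\R\times(-r,r)$; only then can the $\kappa^*$ argument be run, and even so the case $y_n\to\infty$ is handled explicitly by passing to a limiting entire solution of the field equation on $\R^2$. Your obstacle (i) is real, but obstacle (ii) is not: the lower barrier for general data is obtained directly from the compactly supported half-ball eigenpair $(u_R,v_R)$, since after time $1$ the solution is strictly positive on $\overline{\Omega_R}$ and $(u_R,v_R)$ vanishes on $\partial\Omega_R$, so $\varepsilon(u_R,v_R)\le(u,v)(1,\cdot)$ for small $\varepsilon$ without any spreading argument.
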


Now, we compare the behaviour of solutions to the system \eqref{ch1sys:fieldroad} with the ones of system \eqref{ch1sys:symmetric}. 
This allows us to highlight the effects of the fast diffusion channel on the survival chances of the population.
Actually, since solutions of \eqref{ch1sys:symmetric} can be extended by refection to the whole plane, we can make the comparison with equation \eqref{ch1eq:bhroques} for $A(x)=d I_2$ and $L=(\ell, 0)$.
The comparison is performed thanks to the generalised principal eigenvalue $\lambda_1(\Omega)$ for system \eqref{ch1sys:fieldroad} and the periodic eigenvalue $\lambda_p(-\mathcal{L}, \R^2)$, as defined in \eqref{ch1sys:L_RN_p}, for the operator $\mathcal{L}$ in dimension 2.
We obtain the following:

\begin{theorem}\label{ch1thm:comparison}
	Assume $f$ respects hypothesis \eqref{ch1hyp:0}-\eqref{ch1hyp:per}, $c=0$. Then:
	\begin{enumerate}
		\item if  $\lambda_p(-\mathcal{L}, \R^2)<0$, then $\lambda_1( \Omega)<0$, that is, if persistence occurs  for the system ``without the road'' \eqref{ch1eq:bhroques}, then it occurs also for system ``with the road'' \eqref{ch1sys:fieldroad}.
		\item if $\lambda_p(-\mathcal{L}, \R^2)\geq 0$, then $\lambda_1( \Omega)\geq 0$, that is, if extinction occurs for the system ``without the road''  \eqref{ch1eq:bhroques}, then it occurs also for system ``with the road'' \eqref{ch1sys:fieldroad}.
	\end{enumerate}	
\end{theorem}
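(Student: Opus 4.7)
The approach hinges on the periodic principal eigenfunction $\psi_p$ from \eqref{ch1sys:L_RN_p}: since the coefficient $f_v(x,0)$ of $\mathcal L'$ depends only on $x$, uniqueness in Krein-Rutman theory forces $\psi_p=\psi_p(x)$, a positive $\ell$-periodic function on $\R$ satisfying the one-dimensional eigenvalue equation $d\psi_p''+f_v(x,0)\psi_p+\lambda_p\psi_p=0$ (writing $\lambda_p:=\lambda_p(-\mathcal L,\R^2)$). Because $\psi_p$ is independent of $y$, it couples cleanly with road functions depending only on $x$. I plan to establish the two items by opposite routes: an integration argument giving $\lambda_1(\Omega)\le\lambda_p$ for Part 1, and an explicit construction of a test pair giving $\lambda_1(\Omega)\ge\lambda_p$ for Part 2.

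For \textbf{Part 1}, let $(\phi,\psi,\lambda)$ be any triple admissible in \eqref{ch1def:lambda1_S_Omega}. Multiply the field inequality $\mathcal L(\psi)+\lambda\psi\le 0$ by $\psi_p$ and integrate over the strip $[0,\ell]\times(0,R)$, eventually letting $R\to\infty$ (justifying decay of the test functions by a truncation argument). Green's identity in $y$ and the eigenvalue equation for $\psi_p$ reduce the bulk to $(\lambda-\lambda_p)\int\psi\psi_p\,dx\,dy$ plus a boundary contribution at $y=0$ proportional to $\int_0^\ell\psi_p\,\partial_y\psi|_{y=0}\,dx$. Substituting the Robin condition $B\le 0$ rewrites this boundary term as $-\mu\int_0^\ell\phi\psi_p\,dx+\nu\int_0^\ell\psi|_{y=0}\psi_p\,dx$. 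A parallel integration of the road inequality $\mathcal R(\phi,\psi)+\lambda\phi\le 0$ against $\psi_p$ on $[0,\ell]$, using $\ell$-periodicity to integrate by parts twice on the $D\phi''$ term and substituting $\psi_p''=-(f_v+\lambda_p)\psi_p/d$, yields a second relation in which the $\nu\int\psi|_{y=0}\psi_p$ terms appear with opposite signs and cancel on combination. After collecting, the remaining inequality forces $\lambda\le\lambda_p$ in the limit $R\to\infty$; taking the supremum gives $\lambda_1(\Omega)\le\lambda_p$, so $\lambda_p<0$ implies $\lambda_1(\Omega)<0$.

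For \textbf{Part 2}, I construct an admissible pair $(\phi,\psi_p)$ at $\lambda=\lambda_p\ge 0$. Setting $\psi(x,y):=\psi_p(x)$, the field inequality $\mathcal L(\psi_p)+\lambda_p\psi_p=0$ holds trivially. Since $\mu-\lambda_p>0$, the periodic ODE $-D\phi''+(\mu-\lambda_p)\phi=\nu\psi_p$ is coercive and admits a unique positive $\ell$-periodic solution $\phi$, making the road inequality an equality. The remaining condition $B(\phi,\psi_p)=\mu\phi-\nu\psi_p\le 0$ is the crux: inverting the resolvent and using $\psi_p''=-(f_v+\lambda_p)\psi_p/d$ reduces it to a pointwise inequality that may fail for some $x$. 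I would remedy this by modifying the ansatz to $\psi(x,y):=\psi_p(x)e^{-\beta y}$ for small $\beta>0$, which adds a $d\beta\,\psi_p$ of slack in the boundary inequality while only shifting the attainable $\lambda$ down to $\lambda_p-d\beta^2$; letting $\beta\to 0^+$ then yields $\lambda_1(\Omega)\ge\lambda_p\ge 0$. The hard part of the entire argument is precisely this boundary verification---the pointwise sign condition depends intricately on the ratio $D/d$ and on the variation of $f_v(x,0)$ across a period, and closing it requires carefully balancing the $e^{-\beta y}$ perturbation against the shift in $\lambda$.
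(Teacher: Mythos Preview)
Your Part~2 aims to prove the wrong inequality. You are trying to show $\lambda_1(\Omega)\ge\lambda_p(-\mathcal L,\R^2)$ (via the limit $\beta\to 0^+$), but the paper explicitly remarks after Corollary~\ref{ch1thm:ineq2} that this fails in general: one always has $\lambda_1(\Omega)\le\mu$, while by choosing $f_v(x,0)$ very negative one can make $\lambda_p(-\mathcal L,\R^2)$ arbitrarily large. So the ``hard boundary verification'' you identify is not merely hard---it is impossible, because the conclusion is false. What is true is only the \emph{sign} implication $\lambda_p\ge 0\Rightarrow\lambda_1(\Omega)\ge 0$, and the paper obtains this not by constructing a test pair at $\lambda=\lambda_p$, but by invoking Proposition~\ref{ch1prop:ineq} (from \cite{econiches}) together with the identity $\lambda_p(-\mathcal L,\R^2)=\lambda_1(-\mathcal L,\R^2)$ for self-adjoint $\mathcal L$ (Theorem~\ref{ch1thm:1.7inbr}).

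Your Part~1 conclusion $\lambda_1(\Omega)\le\lambda_p$ is correct, but the integration argument as written does not prove it. Admissible triples $(\phi,\psi,\lambda)$ in \eqref{ch1def:lambda1_S_Omega} are only $W^{2,3}_{loc}$ and need not be $\ell$-periodic in $x$, so when you integrate over $[0,\ell]\times(0,R)$ the boundary terms $[\phi'\psi_p-\phi\psi_p']_0^\ell$ and $\int_0^R[\psi_p\partial_x\psi-\psi\partial_x\psi_p]_0^\ell\,dy$ do not vanish; likewise the test functions have no decay in $y$, so the $R\to\infty$ truncation is unjustified. The paper instead proves the corresponding inequality $\lambda_1(-\mathcal L,\R^2)\ge\lambda_1(\Omega)$ (Theorem~\ref{ch1thm:ineq}) by a comparison of Dirichlet eigenvalues on nested bounded domains $B_R(C)\subset\Omega_{R'}$ and a passage to the limit via Propositions~\ref{ch1prop:romain} and~\ref{ch1prop:lim_B_R}; the equality $\lambda_1(-\mathcal L,\R^2)=\lambda_p(-\mathcal L,\R^2)$ then finishes. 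Your integration idea could be salvaged if you first restricted to \emph{periodic} test pairs, i.e.\ worked with $\lambda_p(\Omega)$, but then you would need the non-trivial identity $\lambda_1(\Omega)=\lambda_p(\Omega)$ of Theorem~\ref{ch1thm:eq_1_p}, whose proof uses the Rayleigh-quotient structure on the truncated domains.
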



Theorem \ref{ch1thm:comparison} says that the road has no negative impact on the survival chances of the population in the case of a periodic medium depending only on the variable in the direction of the road.  
This is surprising if compared to the results obtained in \cite{econiches} (precisely Theorem 1.5, part \emph{(ii)}), where the authors find that the existence of the road is deleterious in presence of an ecological niche, and even more counter-intuitive owing the fact that fragmentation of the environment lessens the survival chances of a population, as shown in \cite{bhroques}. This means that, in the case of periodic media, the presence of the fast diffusion channel does not interfere with the persistence of the population, which depends only on the  environment of a periodicity cell.
As seen in \cite{bhroques}, where the dependence of persistence on the amplitude of fragmentation was studied, if the favourable zones are sufficiently large, the population will eventually spread in all of them; the presence of the road does not cause loss of favourable environment and consequently of persistence chances.
However, we expect the spreading speed to be influenced by the presence of the road, as it has been already proven in the case of homogeneous environment.

We point out that Theorem \eqref{ch1thm:char} completes and is in accordance with the results on long time behaviour found in \cite{brr} for a homogeneous reaction term, which we can see as a particular case of periodicity, which respects positive KPP hypothesis (where the positivity is requested through $f'(0)>0$). In \cite{brr}, Theorem 4.1 states the convergence of any positive solution to the unique positive stationary solution of the system.  Since it is well known that for the homogeneous case it holds $\lambda_1(-\mathcal{L}, \R^2)=- f'(0)$, the positivity hypothesis gives that $\lambda_1(-\mathcal{L}, \R^2)<0$ and, as a consequence of Theorem \ref{ch1thm:ineq}, that the second case in our Theorem \ref{ch1thm:char} occurs. 
If instead we asked for $f'(0)\leq0$, then we would be in the first case of Theorem \ref{ch1thm:char}, yielding extinction of the population.

\paragraph{Effects of amplitude of heterogeneity.}
One may expect that the presence of a road may alter the complex interaction between more favourable and less favourable zones, in particular penalising the persistence, since it was shown that populations prefer a less fragmented environment. However, the road does not interfere with that; as a consequence, also for environments presenting fast diffusion channels, some results of the analysis on the effect of fragmentation performed in \cite{bhroques} holds.

Take a parameter $\alpha>0$ and consider system \eqref{ch1sys:fieldroad} with nonlinearity
\begin{equation}\label{ch11421}
\tilde{f}(x,v)=\alpha f(x,v).
\end{equation}
To highlight the dependence on $\alpha$, we will call $\lambda_1(\Omega, \alpha)$ the  generalised principal eigenvalue defined in \eqref{ch1def:lambda1_S_Omega} with nonlinearity $\tilde{f}$.
As a direct consequence of Theorem \eqref{ch1thm:comparison} and Theorem 2.12 in \cite{bhroques}, we have the following result on the amplitude of heterogeneity:

\begin{corollary}
	Assume $\tilde{f}$ is defined as in \eqref{ch11421}, $f$ satisfies \eqref{ch1hyp:0}-\eqref{ch1hyp:per}, and $c=0$. Then:
	\begin{enumerate}
		\item if $ \int_{0}^{\ell} f_v(x,0)>0$, or if $ \int_{0}^{\ell} f_v(x,0)=0$ and $f\not\equiv 0$, then for all $\alpha >0$ we have $\lambda_1(\Omega, \alpha  )<0$.
		\item if $ \int_{0}^{\ell} f_v(x,0)<0$, then  $\lambda_1(\Omega, \alpha )>0$ for $\alpha$ small enough; if moreover there exists $x_0\in[0,\ell]$ such that $f_v(x_0,0)>0$, then for all $\alpha$ large enough $\lambda_1(\Omega, \alpha )<0$.
	\end{enumerate}
\end{corollary}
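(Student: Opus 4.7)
The plan is to chain Theorem~\ref{ch1thm:comparison} with the fragmentation analysis of the scalar periodic principal eigenvalue as a function of the amplitude parameter, which is exactly Theorem~2.12 of \cite{bhroques}. Linearising $\tilde f=\alpha f$ at $v=0$ gives $\tilde f_v(x,0)=\alpha f_v(x,0)$, so the relevant linearised operator is $\mathcal{L}'_\alpha\psi:=d\Delta\psi+\alpha f_v(x,0)\psi$ on $\R^2$; write $\lambda_p(\alpha):=\lambda_p(-\mathcal{L}'_\alpha,\R^2)$. Once the sign of $\lambda_p(\alpha)$ as a function of $\alpha$ is understood, Theorem~\ref{ch1thm:comparison} immediately transports it to the sign of $\lambda_1(\Omega,\alpha)$, which is exactly what the corollary asserts.

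The first step is a dimensional reduction: since $f_v(x,0)$ is independent of $y$, one may seek a positive $L$-periodic principal eigenfunction of the form $\psi(x)$ (with $L=(\ell,0)$), and uniqueness from Krein--Rutman then shows that the $\R^2$-periodic principal eigenvalue coincides with the $1$D periodic principal eigenvalue of $-d\psi''-\alpha f_v(x,0)\psi$ on $[0,\ell]$. I would then apply Theorem~2.12 of \cite{bhroques} to this $1$D operator, which gives, writing $\bar m:=\tfrac{1}{\ell}\int_0^\ell f_v(x,0)\,dx$: if $\bar m>0$, or $\bar m=0$ with $f_v(\cdot,0)\not\equiv 0$, then $\lambda_p(\alpha)<0$ for every $\alpha>0$; if $\bar m<0$, then $\lambda_p(\alpha)>0$ for $\alpha$ small enough, and, whenever $f_v(x_0,0)>0$ at some $x_0$, $\lambda_p(\alpha)<0$ for $\alpha$ large enough. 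The delicate sub-cases are $\bar m=0$ (which relies on strict concavity of $\alpha\mapsto\lambda_p(\alpha)$: testing against the constant function gives $\lambda_p(\alpha)\leq 0$, and nontrivial heterogeneity makes the inequality strict) and the $\alpha$-large limit when $\bar m<0$ (obtained by plugging into the Rayleigh quotient a positive test function concentrated near the favourable point $x_0$).

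Finally, Theorem~\ref{ch1thm:comparison} translates $\lambda_p(\alpha)<0$ into $\lambda_1(\Omega,\alpha)<0$ and $\lambda_p(\alpha)\geq 0$ into $\lambda_1(\Omega,\alpha)\geq 0$, which yields both statements of the corollary. The one mildly awkward point is the strict inequality $\lambda_1(\Omega,\alpha)>0$ claimed in case~(2) for small~$\alpha$, as opposed to the $\geq 0$ that Theorem~\ref{ch1thm:comparison} directly supplies; this can be upgraded via monotonicity of $\lambda_1$ in the coefficient $f_v(x,0)$ combined with continuity in $\alpha$, using that $\lambda_1(\Omega,0)>0$ for the purely dissipative road-field linear system. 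The main (and only) obstacle is really just verifying the $y$-independence reduction that makes Theorem~2.12 of \cite{bhroques} applicable in two dimensions; once that is done, the corollary reduces to two direct black-box citations.
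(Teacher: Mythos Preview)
Your approach is exactly the one the paper intends: the corollary is stated there explicitly as ``a direct consequence of Theorem~\ref{ch1thm:comparison} and Theorem~2.12 in \cite{bhroques}'', with no further argument given. Your dimensional reduction is precisely the content of the paper's identity \eqref{ch1eq:-5}, so that step is already available.

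One caveat on your fix for the strict inequality in part~(2): the claim that $\lambda_1(\Omega,0)>0$ is not correct. With $\alpha=0$ the linearised field operator is just $d\Delta$, and the constant pair $(\phi,\psi)=(\nu/\mu,\,1)$ satisfies $\mathcal{L}\psi=0$, $\mathcal{R}(\phi,\psi)=0$, $B(\phi,\psi)=0$; this forces $\lambda_1(\Omega,0)=0$, not $>0$, so your continuity-from-$\alpha=0$ argument does not yield strictness. The paper does not address this point either (it simply asserts the corollary), and the available inequalities --- Theorem~\ref{ch1thm:ineq} giving $\lambda_1(\Omega,\alpha)\le\lambda_p(\alpha)$ and Proposition~\ref{ch1prop:ineq} giving only $\lambda_p\ge 0\Rightarrow\lambda_1(\Omega)\ge 0$ --- do not by themselves upgrade $\lambda_p(\alpha)>0$ to $\lambda_1(\Omega,\alpha)>0$. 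You are right to flag this as the one delicate spot; your diagnosis is sound even if the proposed remedy needs a different ingredient.
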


This result describes with precision the fact that, to persist, a species must have a sufficiently large favourable zone available. If the territory is more advantageous than not, then the population persist. If however there environment is generally unfavourable, the population persists only if there are some contiguous advantageous zones large enough; if instead the advantageous zones are fragmented, even if there is unlimited favourable territory, the population will encounter extinction.

\subsubsection{A climate change setting for a general $f(x,v)$}

We consider now a general nonlinearity that depends on the spatial variable in the direction of the road. We stress the fact that we do not suppose any periodicity, but the case of a periodic $f$ is a particular case of this setting. Moreover, the following result is done in the general framework of a possible climate change, so the parameter $c$ may be different from $0$.

Comparison between the systems with and without the road, in the general case, are done through comparison between $\lambda_1(\Omega)$ and the generalised principal eigenvalue of system \eqref{ch1sys:symmetric}, given by
\begin{equation}\label{ch1lambda:L_Omega}
\begin{split}
\lambda_1(-\mathcal{L}, \Omega)=\sup \{ \lambda \in \R \ : \ \exists \psi \geq 0, \psi \not\equiv 0 \ \text{such that} \\ 
\mathcal{L}(\psi) + \lambda \psi \leq 0 \ \text{on} \ \Omega, \ -\partial_y \psi|_{y=0}\leq 0 \ \text{on} \ \R \}
\end{split}
\end{equation}
for $\psi\in W_{loc}^{2,3}(\Omega)$. With this notation, we have the following:

\begin{theorem}\label{ch1thm:ineq}
	Assume $\lambda_1(-\mathcal{L}, \R^2)$ as in \eqref{ch1lambda:L_Omega} and $\lambda_1(\Omega)$ as in \eqref{ch1def:lambda1_S_Omega}; then $\lambda_1(-\mathcal{L}, \R^2) \geq \lambda_1(\Omega)$.
\end{theorem}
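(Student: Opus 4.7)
The aim is to prove $\lambda_1(-\mathcal{L}, \R^2) \geq \lambda_1(\Omega)$. By the sup-characterizations in \eqref{ch1def:lambda1_S_Omega} and \eqref{ch1lambda:L_Omega} (the latter applied on $\R^2$, with no boundary condition), it suffices to show that every $\lambda < \lambda_1(\Omega)$ satisfies $\lambda < \lambda_1(-\mathcal{L}, \R^2)$. Fix such a $\lambda$ with admissible pair $(\phi, \psi) \geq (0, 0)$, $(\phi, \psi) \not\equiv (0, 0)$. A preliminary observation is that $\psi \not\equiv 0$: if $\psi$ were identically zero, then $B(\phi, \psi) \leq 0$ would reduce to $\mu \phi \leq 0$, forcing $\phi \equiv 0$ and contradicting non-triviality.

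The plan is to factor through the intermediate field-only Neumann eigenvalue $\lambda_1(-\mathcal{L}, \Omega)$, defined by \eqref{ch1lambda:L_Omega} on $\Omega$ itself, and establish the chain
\[
\lambda_1(\Omega) \;\leq\; \lambda_1(-\mathcal{L}, \Omega) \;=\; \lambda_1(-\mathcal{L}, \R^2).
\]
The equality follows from the $y$-translation-invariance of the coefficients of $\mathcal{L}$: on either $\R^2$ or on $\Omega$ with the Neumann-type inequality, the generalized principal eigenvalue is realized by a $y$-independent test function, and both reduce to the one-dimensional generalized principal eigenvalue of $d\partial_{xx} + c\partial_x - f_v(x, 0)$ on $\R$. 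More concretely, the even symmetrization in $y$ of any supersolution on $\R^2$ gives a supersolution on $\Omega$ with $\partial_y \psi|_{y=0} = 0$, and conversely an approximation argument allows one to pass from $\Omega$ back to $\R^2$.

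The substantive step is $\lambda_1(\Omega) \leq \lambda_1(-\mathcal{L}, \Omega)$, which I would establish by exhaustion. On the bounded rectangles $\Omega_R := (-R, R) \times (0, R)$, both sides arise as limits of classical Dirichlet principal eigenvalues: $\lambda^{RF}_R$ for the road-field system (with Dirichlet conditions on the outer boundary) and $\lambda^F_R$ for the field with Neumann at $\{y=0\}$ and Dirichlet elsewhere. Both exist by Krein--Rutman theory and converge monotonically to the respective generalized eigenvalues as $R \to \infty$, by the general theory developed in \cite{br, romain}. For each $R$, I would take the positive field eigenfunction $\psi_R^F$, define $\phi_R$ as the non-negative solution of the one-dimensional Dirichlet problem $\mathcal{R}(\phi_R, \psi_R^F) + \lambda^F_R \phi_R = 0$ on $(-R, R)$ (non-negativity following from the maximum principle, since the source $\nu \psi_R^F|_{y=0}$ is non-negative), and then attempt to verify that $(\phi_R, \psi_R^F)$ is admissible at $\lambda = \lambda^F_R$ for the road-field problem on $\Omega_R$.

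The main obstacle is the boundary inequality $B(\phi_R, \psi_R^F) \leq 0$, which under the Neumann condition $\partial_y \psi_R^F|_{y=0} = 0$ reduces to the pointwise bound $\mu \phi_R \leq \nu \psi_R^F|_{y=0}$. This is a sharp form of the maximum principle for the one-dimensional operator $D \partial_{xx} + c\partial_x - (\mu - \lambda^F_R)$ driven by $-\nu\psi_R^F|_{y=0}$; in the regime $\mu > \lambda^F_R$ (which is the one of interest, the opposite regime being easily handled since the eigenvalues are then both strictly negative) the bound should follow from positivity of the associated Green's function together with an $L^\infty$ estimate comparing $\phi_R$ to its right-hand side. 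If this direct route turns out to be delicate, a backup is to introduce a Robin-type auxiliary field eigenvalue that explicitly couples $\phi$ and $\psi|_{y=0}$, compare $\lambda^{RF}_R$ with it pair-by-pair, and then pass to the Neumann limit via the continuity of generalized principal eigenvalues established in \cite{romain}.
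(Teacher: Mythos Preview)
Your approach has a genuine directional error. If your construction succeeded---that is, if $(\phi_R,\psi_R^F)$ were positive and satisfied all three inequalities $\mathcal{L}\psi_R^F+\lambda_R^F\psi_R^F\leq 0$, $\mathcal{R}(\phi_R,\psi_R^F)+\lambda_R^F\phi_R\leq 0$ and $B(\phi_R,\psi_R^F)\leq 0$---then by the very sup-characterization \eqref{ch1def:lambda1_S_Omega} you invoke, $\lambda_R^F$ would belong to the admissible set for the road--field eigenvalue, yielding $\lambda_R^F\leq\lambda_R^{RF}$. That is the \emph{opposite} of what you need: to obtain $\lambda_1(-\mathcal{L},\R^2)\geq\lambda_1(\Omega)$ via exhaustion you must show $\lambda_R^F\geq\lambda_{R'}^{RF}$ for suitable nested domains. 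Moreover, the pointwise bound $\mu\phi_R\leq\nu\psi_R^F|_{y=0}$ is not a generic consequence of the maximum principle: already when $\psi_R^F|_{y=0}$ is close to a constant one finds $\phi_R\approx\nu\psi_R^F|_{y=0}/(\mu-\lambda_R^F)$ in the interior, so the bound fails as soon as $\lambda_R^F>0$. Your Robin backup inherits the same orientation problem.

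The paper's argument sidesteps the road entirely. For each $R>0$ one picks $R'=3R$ and a centre $C=(0,2R/3)$ so that the ball $B_R(C)$ lies inside $\Omega_{R'}$ and away from $\{y=0\}$. One then compares only the field component $v_{R'}$ of the road--field eigenpair with the Dirichlet eigenfunction $\psi_R$ of $-\mathcal{L}$ on $B_R(C)$, via a touching argument: set $\theta^*=\sup\{\theta\geq 0:v_{R'}>\theta\psi_R \text{ on }B_R(C)\}$ and evaluate the equation satisfied by $\rho=v_{R'}-\theta^*\psi_R$ at the interior contact point. Since $B_R(C)$ never meets the boundary, only the field equation for $v_{R'}$ is used; the road equation and the exchange condition $B$ play no role. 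This yields $\lambda_1(-\mathcal{L},B_R)\geq\lambda_1(\Omega_{R'})$ directly, and the $y$-independence of $f$ gives $\lambda_1(-\mathcal{L},B_R(C))=\lambda_1(-\mathcal{L},B_R)$; the theorem then follows from Propositions~\ref{ch1prop:romain} and~\ref{ch1prop:lim_B_R} by letting $R\to\infty$.
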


In the special case $c=0$, some information on the relations between $\lambda_1(-\mathcal{L}, \R^2)$ and $\lambda_1(\Omega)$  was already available in \cite{econiches}: Proposition 3.1 yields that $\lambda_1(-\mathcal{L}, \R^2)\geq 0$ implies $\lambda_1(\Omega)\geq 0$. Thanks to that and Theorem \ref{ch1thm:ineq}, the following result holds:

\begin{corollary}\label{ch1thm:ineq2}
	If $c=0$, we have $\lambda_1(-\mathcal{L}, \R^2)<0$ if and only if $\lambda_1(\Omega)<0$.
\end{corollary}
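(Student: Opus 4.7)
The plan is to observe that the corollary is essentially a logical combination of two one-sided implications that are already available: Theorem \ref{ch1thm:ineq}, proved in this paper, and Proposition 3.1 from \cite{econiches}, which is explicitly recalled in the paragraph preceding the statement. Neither implication requires new analytic input, so my proof will just be a short deductive argument laying out the two directions.

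For the forward direction, I would argue as follows. Assume $\lambda_1(-\mathcal{L}, \R^2) < 0$. Theorem \ref{ch1thm:ineq} states that $\lambda_1(-\mathcal{L}, \R^2) \geq \lambda_1(\Omega)$, so directly $\lambda_1(\Omega) \leq \lambda_1(-\mathcal{L}, \R^2) < 0$. Note that the hypothesis $c=0$ is used here only to be consistent with the setting; the inequality from Theorem \ref{ch1thm:ineq} holds in the full generality stated.

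For the converse direction, I would use the contrapositive of the result from \cite{econiches}. Proposition 3.1 of \cite{econiches} asserts that, when $c=0$, $\lambda_1(-\mathcal{L}, \R^2) \geq 0$ implies $\lambda_1(\Omega) \geq 0$. Taking contrapositives, $\lambda_1(\Omega) < 0$ implies $\lambda_1(-\mathcal{L}, \R^2) < 0$, which is exactly what is needed.

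Combining the two implications yields the equivalence, and the corollary is proved. There is no genuine obstacle here: the only reason the statement deserves to be singled out is that the ``hard'' half (the second implication) already required delicate eigenvalue estimates in \cite{econiches}, while the ``new'' half comes from Theorem \ref{ch1thm:ineq} above. I would keep the write-up to a few lines, perhaps remarking that in the case $c\neq 0$ only one of the two directions (the one from Theorem \ref{ch1thm:ineq}) is known, which is why the statement is restricted to $c=0$.
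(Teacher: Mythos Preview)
Your proposal is correct and follows exactly the same approach as the paper: the corollary is stated immediately after noting that Proposition 3.1 of \cite{econiches} (recalled as Proposition \ref{ch1prop:ineq}) gives one implication, while Theorem \ref{ch1thm:ineq} gives the other, and the paper does not provide any further argument. Your write-up simply spells out this two-line deduction.
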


As already pointed out in \cite{romain}, even for $c=0$ it is not true that $\lambda_1(-\mathcal{L}, \R^2) =\lambda_1(\Omega)$. In fact, it has been found that $\lambda_1(\Omega) \leq \mu$, while playing with $f$ one can have $\lambda_1(-\mathcal{L}, \R^2)$ as large as desired. However, the fact that the two eigenvalues have the same sign reveals that they are profoundly linked.


\subsection{Organisation of the paper}

In Section \ref{ch1s:eigenvalues}, we recall and discuss the properties of the eigenvalues $\lambda_1(\Omega)$, $\lambda_1(-\mathcal{L}, \R^2)$ and $\lambda_p(-\mathcal{L}, \R^2)$ already known in the literature. 
Furthermore, a periodic eigenvalue for the system \eqref{ch1sys:fieldroad} will be defined; because of the presence of the road, the periodicity is present only in the $x$ direction. As a consequence, it is useful to define an analogous generalised eigenvalue for the system without the road \eqref{ch1sys:symmetric} with periodicity only in the direction of the road.

In Section \ref{ch1s:ordering}, one finds the proof of Theorem \ref{ch1thm:ineq} and Theorem \ref{ch1thm:comparison}. Moreover, the relations between the newly defined generalised periodic eigenvalues and the known ones are shown.

The last Section \ref{ch1s:lb} treats large time behaviour for solutions to \eqref{ch1sys:fieldroad} with $c=0$ and periodic $f$; this includes the proof of Theorem \ref{ch1thm:char}.

\section{Generalised principal eigenvalues and their properties} \label{ch1s:eigenvalues}

Both road-field model and reaction-diffusion equations in periodic media have been treated in several papers.
In this section, we introduce some useful objects and recall their properties. 
All along this section we will make repeated use of the operators $\mathcal{L}$, $\mathcal{R}$ and $B$, that were  defined in \eqref{ch1sys:operators}.

\subsection{Eigenvalues in periodic media}

Since $\mathcal{L}$ has periodic terms, it is natural to look for eigenfunctions that have the same periodicity. However, to begin the discussion on the periodic eigenvalue for the operator $\mathcal{L}$ in $\R^2$, we consider  its counterpart in $\R$. 
We look for the unique number $\lambda_p(-\mathcal{L}, \R)\in\R$ such that there exists a function $\psi\in W_{loc}^{2, p}(\R)$ solution to the problem
\begin{equation}\label{ch1sys:L_R_p}
\left\{
\begin{array}{ll}
d\psi''+f_v(x, 0)\psi + \lambda \psi = 0, & x\in\R, \\
\psi> 0, &  x\in\R, \\
|| \psi ||_{\infty}=1, \\
\psi \ \text{is periodic in $x$ of period $\ell$.}
\end{array}
\right.
\end{equation}
In \eqref{ch1sys:L_R_p}, the operator $\mathcal{L}$ has been replaced by an operator working on $\R$, namely the Laplacian has been substituted by a double derivative.
Notice that existence and uniqueness of the solution to \eqref{ch1sys:L_R_p}, that we call $(\lambda_p(-\mathcal{L}, \R), \psi_p)$, is guaranteed  by Krein-Rutman theory.

For the operator $\mathcal{L}$, since it has no dependence on the $y$ variable,
we have to introduce a fictive periodicity in order to be able to use the Krein-Rutman theory. Thus, fix $\ell'>0$ and consider the problem in $\R^2$ of finding the value $\lambda_p(-\mathcal{L}, \R^2)\in\R$ such that there exists a solution $\psi\in W_{loc}^{2, p}(\R^2)$ to the system
\begin{equation}\label{ch1sys:L_R2_p}
\left\{
\begin{array}{ll}
\mathcal{L}(\psi) + \lambda \psi = 0, & (x,y)\in\R^2, \\
\psi> 0, &  (x,y)\in\R^2, \\
|| \psi ||_{\infty}=1, \\
\psi \ \text{is periodic in $x$ and $y$ of periods $\ell$ and $\ell'$}.
\end{array}
\right.
\end{equation}
Again we can use the Krein-Rutman theorem to see that there exists a unique couple $(\lambda(-\mathcal{L}, \R^2), \psi_{\ell'})$ solving \eqref{ch1sys:L_R2_p}. Now, with a slight abuse of notation, we consider the function $\psi_p(x,y)$ as the extension in $\R^2$ of $\psi_p$ solution to \eqref{ch1sys:L_R_p}. We observe that the couple $(\lambda_p(-\mathcal{L}, \R), \psi_p)$ gives a solution to \eqref{ch1sys:L_R2_p}. Hence, 
\begin{equation}\label{ch1eq:-5}
\lambda_p(-\mathcal{L}, \R^2)=\lambda_p(-\mathcal{L}, \R) \quad \text{and} \quad \psi_p\equiv \psi_{\ell'}.
\end{equation}
This also implies that neither $\lambda_p(-\mathcal{L}, \R^2)$ nor $\psi_{\ell'}$ depend on the parameter $\ell'$ that was artificially introduced. From now on, we will use only $ \psi_p$.

The properties of the eigenvalue $\lambda_p(-\mathcal{L}, \R^2)$ were also studied in \cite{br}, where it is called $\lambda'$ and defined as
\begin{equation}\label{ch1def:lambdap_dim2}
\begin{split} 
\lambda_p(-\mathcal{L}, \R^2)= &\inf \{ \lambda \in \R \ :\  \exists \varphi\in \mathcal{C}^2(\R^2)\cap L^{\infty}(\R^2), \ \varphi>0,  \\ &\hspace{8em} \varphi \ \text{periodic in $x$ and $y$}, \ \mathcal{L}(\varphi)+\lambda \varphi \geq 0    \}.
\end{split}
\end{equation}
In particular, in Proposition 2.3 of \cite{br} it is stated that the value found with \eqref{ch1sys:L_R2_p} coincides with the one defined in \eqref{ch1def:lambdap_dim2}.

\subsection{Generalised principal eigenvalues for the system with and without the road and some properties}

In this section, we are going to treat eigenvalues that are well defined also for non periodic reaction functions. 

The generalised eigenvalue $\lambda_1( \Omega)$ for the system \eqref{ch1sys:fieldroad}, that we defined in \eqref{ch1def:lambda1_S_Omega}, was first introduced in \cite{romain}. Together with this, the authors also proved the interesting property that $\lambda_1( \Omega)$ coincides with the limit of principal
eigenvalues of the same system restricted to a sequence of invading domains. 
They use some half ball domains defined as follow for $R>0$:
\begin{equation}\label{ch11722}
\Omega_R:=B_R\cap (\Omega) \quad \text{and} \quad I_R:=(-R, R).
\end{equation}
Them we have the following characterisation for $\lambda_1( \Omega)$:
\begin{proposition}[Theorem 1.1 of \cite{romain}] \label{ch1prop:romain}
	For $R>0$, 
	there is a unique 
	$\lambda_1( \Omega_R) \in \R$
	and a unique (up to multiplication by a positive scalar) positive 
	$(u_R, v_R) \in W^{2,3}(I_R) \times W^{2,3} (\Omega_R)$ that satisfy the eigenproblem
	\begin{equation}\label{ch1sys:halfball}
	\left\{
	\begin{array}{ll}
	\mathcal{R}(\phi, \psi) +\lambda\phi = 0, & x\in I_R, \\
	\mathcal{L}(\psi)  + \lambda \psi = 0, &(x,y)\in \Omega_R, \\
	B(\phi, \psi)= 0, & x\in I_R, \\
	\psi =0, & (x,y)\in (\partial\Omega_R ) \setminus (I_R\times \{0\}) \\
	\phi(R)=\phi(-R)=0. &
	\end{array}
	\right.
	\end{equation}
	Moreover, 
	\begin{equation*}
	\lambda_1( \Omega_R) \underset{R\to +\infty}{\searrow} \lambda_1( \Omega).
	\end{equation*}
\end{proposition}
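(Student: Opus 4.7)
My plan has three steps: (i) establish existence and uniqueness of the eigenpair on each half-ball via Krein--Rutman; (ii) show that $R \mapsto \lambda_1(\Omega_R)$ is nonincreasing and bounded below; (iii) identify the limit with $\lambda_1(\Omega)$ by a two-sided comparison.

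For step (i), I would recast the problem as a spectral problem for a compact, strongly positive operator. Picking $\gamma > 0$ large enough that the shifted coupled operator admits a maximum principle on $\Omega_R$, define the solution operator $T_R : (g_1, g_2) \mapsto (\phi, \psi)$ of
\begin{equation*}
-\mathcal{R}(\phi, \psi) + \gamma \phi = g_1 \ \text{on} \ I_R, \quad -\mathcal{L}(\psi) + \gamma \psi = g_2 \ \text{on} \ \Omega_R,
\end{equation*}
coupled with $B(\phi, \psi) = 0$ on $I_R$ and the Dirichlet conditions of \eqref{ch1sys:halfball}. Up-to-the-interface elliptic regularity (cf.\ \cite{romain}) makes $T_R$ compact on $C(\overline{I_R}) \times C(\overline{\Omega_R})$, while the strong maximum principle and Hopf's lemma at the line yield strong positivity. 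Krein--Rutman then delivers a unique simple principal eigenvalue of $T_R$ with strictly positive eigenpair, and a shift recovers $(\lambda_1(\Omega_R), u_R, v_R)$.

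For step (ii), fix $R < R'$. The eigenpair $(u_{R'}, v_{R'})$ is strictly positive on $\partial\Omega_R \setminus (I_R \times \{0\})$ since this piece lies in the interior of $\Omega_{R'}$, and its restriction to $\overline{\Omega_R}$ still solves the eigenvalue equations with parameter $\lambda_1(\Omega_{R'})$. So it is a positive supersolution of the bounded problem with \emph{positive} curved-boundary values, and the sup-characterisation of the Dirichlet principal eigenvalue (which accepts nonnegative boundary values) gives $\lambda_1(\Omega_{R'}) \leq \lambda_1(\Omega_R)$. Testing \eqref{ch1def:lambda1_S_Omega} against the constant pair $(\nu/\mu, 1)$ yields $\lambda_1(\Omega_R) \geq \min\{0, \inf_x f_v(x, 0)\} > -\infty$, finite thanks to the uniform Lipschitz assumption on $f$ in $v$; so $\lambda_\infty := \lim_{R \to \infty} \lambda_1(\Omega_R) \in \R$ is well defined.

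For step (iii), I establish $\lambda_\infty = \lambda_1(\Omega)$ via two opposite inequalities. If $\lambda < \lambda_1(\Omega)$, select an admissible pair $(\phi, \psi)$ from \eqref{ch1def:lambda1_S_Omega}; restricted to $\Omega_R$ it is a positive supersolution with positive curved-boundary values, hence $\lambda \leq \lambda_1(\Omega_R)$ for every $R$, so $\lambda_1(\Omega) \leq \lambda_\infty$. For the reverse, normalise the eigenpairs by $v_R(0, 1) = 1$, and combine interior Schauder estimates with a Harnack inequality for the road-field system to bound $(u_R, v_R)$ locally uniformly in $C^{2, \alpha}$ on compact subsets of $\overline{\Omega}$, independently of large $R$. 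A diagonal extraction produces a nontrivial nonnegative limit $(u_\infty, v_\infty)$ on $\Omega$ solving the eigenproblem with parameter $\lambda_\infty$; feeding it into \eqref{ch1def:lambda1_S_Omega} yields $\lambda_\infty \leq \lambda_1(\Omega)$. The hard part will be securing the Harnack estimate uniformly in $R$: because the road-field coupling lives only in the boundary relation $B(\phi, \psi) = 0$, a purely interior Harnack for $v_R$ is not enough, and one must propagate positivity through $B$ to the road component $u_R$ so that the normalisation cannot be destroyed in passing to the limit.
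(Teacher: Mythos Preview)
The paper does not contain a proof of this proposition: it is quoted verbatim as Theorem~1.1 of \cite{romain} and used as a black box (the paper even remarks, when introducing the periodic analogue $\lambda_p(\R\times(0,r))$, that ``the proof of the existence can be derived by modifying \ldots\ the proof of the existence of $\lambda_1(\Omega_R)$ that is found in the Appendix of \cite{romain}''). So there is nothing in the present paper to compare your attempt against.

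That said, your three-step outline is the standard route and is essentially what \cite{romain} carries out: Krein--Rutman on a compact strongly positive resolvent for (i), domain monotonicity plus a uniform lower bound for (ii), and a two-sided inequality with a Harnack/compactness argument for (iii). Your identification of the delicate point --- a Harnack inequality that couples the road and field components through the transmission condition $B(\phi,\psi)=0$ --- is exactly right; this is the content of Theorem~2.3 in \cite{romain}, which the present paper invokes elsewhere (see the proof of Proposition~\ref{ch1prop:-3}). One small correction in your step (ii): the lower bound you obtain from the constant pair $(\nu/\mu,1)$ is a lower bound for $\lambda_1(\Omega)$, not directly for $\lambda_1(\Omega_R)$; but since you have already shown $\lambda_1(\Omega_R)\ge\lambda_1(\Omega)$ via the sup-characterisation (any admissible test pair for $\lambda_1(\Omega)$ restricts to one for the bounded problem), this still gives the uniform lower bound you need.
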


We point out that, using the strong maximum principle, we have that $u_R > 0$ in $I_R$ and $v_R > 0$ in $\Omega_R$.


We also consider the principal eigenvalue on the truncated domains for the linear operator $\mathcal{L}(\psi)$. 
To do that, for any $R>0$ we call $B_R^P$ the ball of centre $P=(x_P,y_P)$ and radius $R$.  We define $\lambda_1(-\mathcal{L}, B_R^P)$ as the unique real number such that
the problem
\begin{equation}\label{ch1sys:L_BR}
\left\{
\begin{array}{ll}
\mathcal{L}(\psi_R) + \lambda_1(-\mathcal{L}, B_R^P) \psi_R = 0, & (x,y)\in B_R^P, \\
\psi_R=0, & (x,y)\in \partial B_R^P, 
\end{array}
\right.
\end{equation}
admits a positive solution $\psi_R\in W^{2,3}(B_R^P)$.
The existence and uniqueness of such quantity and its eigenfunction is a well-known result derived via the Krein-Rutman theory. 
We also notice that, calling $B_R$ the ball with radius $R$ and center $O=(0,0)$, the couple $(\lambda_1(-\mathcal{L}, B_R), \psi_R)$ is also a solution to the problem
\begin{equation}\label{ch1sys:L_OmegaR}
\left\{
\begin{array}{ll}
\mathcal{L}(\psi) + \lambda \psi = 0, & (x,y)\in \Omega_R, \\
\partial_y \psi= 0, & x\in I_R, \\
\psi=0, & (x,y)\in (\partial \Omega_R)\setminus (I_R\times \{0\}).
\end{array}
\right.
\end{equation}
The proof of that is very simple. 
If $(\lambda, \psi)$ is the unique solution to \eqref{ch1sys:L_OmegaR},  by extending $\psi$ by symmetry in $B_R$ we get a solution to \eqref{ch1sys:L_BR}. By the uniqueness of the solution to \eqref{ch1sys:L_BR}, we get $\lambda=\lambda_1(-\mathcal{L}, B_R)$.

Similarly to what happens with $\lambda_1( \Omega_R)$, we have that $\lambda_1(-\mathcal{L}, B_R)$ converges to the value $\lambda_1(-\mathcal{L}, \Omega)$ defined in \eqref{ch1lambda:L_Omega}, as stated in Proposition 2.4 of \cite{econiches}, that we report here:

\begin{proposition}[Proposition 2.4 in \cite{econiches}] \label{ch1prop:lim_B_R}
	It holds \begin{equation*}
	\lambda_1(-\mathcal{L}, B_R) \underset{R\to +\infty}{\searrow} \lambda_1(-\mathcal{L}, \Omega).
	\end{equation*}
\end{proposition}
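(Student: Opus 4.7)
\emph{Plan.} By the symmetry argument following \eqref{ch1sys:L_OmegaR}, $\lambda_1(-\mathcal{L}, B_R)$ coincides with the principal eigenvalue of the mixed Neumann--Dirichlet problem \eqref{ch1sys:L_OmegaR} on $\Omega_R$, and I shall work with that formulation throughout. The strategy is to prove monotonicity of $R \mapsto \lambda_1(-\mathcal{L}, B_R)$ (so the limit $\lambda^* := \lim_R \lambda_1(-\mathcal{L}, B_R)$ exists) and then sandwich $\lambda^*$ between two estimates relating it to $\lambda_1(-\mathcal{L}, \Omega)$. Monotonicity is the classical domain-monotonicity of the principal eigenvalue of a second-order uniformly elliptic operator, strict by the strong maximum principle, so $\lambda^* \in [-\infty, +\infty)$ exists.

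For the upper bound $\lambda^* \leq \lambda_1(-\mathcal{L}, \Omega)$, I would normalize the eigenfunctions $\psi_R$ by $\psi_R(P_0) = 1$ at a fixed $P_0 \in \Omega$ (taking $R$ large enough that $P_0 \in \Omega_R$). The Harnack inequality (interior and up-to-boundary at the Neumann face $y=0$) then bounds $\psi_R$ uniformly on every compact of $\overline\Omega$, and Schauder estimates upgrade this to $C^{2,\alpha}_{\text{loc}}$ bounds. Extracting a subsequential limit $\psi_\infty$, which is positive by Harnack and solves $\mathcal{L}\psi_\infty + \lambda^*\psi_\infty = 0$ in $\Omega$ with $-\partial_y\psi_\infty|_{y=0} = 0$, makes $\psi_\infty$ admissible in \eqref{ch1lambda:L_Omega} at the value $\lambda^*$, giving $\lambda^* \leq \lambda_1(-\mathcal{L}, \Omega)$.

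For the reverse inequality, it is enough to show $\lambda_1(-\mathcal{L}, B_R) \geq \lambda_1(-\mathcal{L}, \Omega)$ for every $R > 0$. Given any admissible pair $(\lambda, \psi)$ for the sup in \eqref{ch1lambda:L_Omega}, I would run a Berestycki--Nirenberg--Varadhan touching argument on $\Omega_R$: set
\begin{equation*}
t^* := \sup\{ t \geq 0 \, : \, t \psi_R \leq \psi \text{ on } \overline{\Omega_R} \},
\end{equation*}
which is strictly positive and finite because $\psi_R$ vanishes on the curved arc of $\partial\Omega_R$ while $\psi > 0$ there. The nonnegative function $w := \psi - t^*\psi_R$ then vanishes at some $P_0 \in \overline{\Omega_R}$ not lying on the curved arc, and satisfies $(\mathcal{L} + \lambda)w \leq t^*(\lambda_1(-\mathcal{L}, B_R) - \lambda)\psi_R$ in $\Omega_R$. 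If one supposes $\lambda > \lambda_1(-\mathcal{L}, B_R)$, the right-hand side is strictly negative, and the SMP (after a standard shift to render the zeroth-order coefficient nonpositive) forces $w \equiv 0$, contradicting $w > 0$ on the curved arc; hence $\lambda \leq \lambda_1(-\mathcal{L}, B_R)$, and taking the sup over admissible $\lambda$ gives $\lambda_1(-\mathcal{L}, \Omega) \leq \lambda_1(-\mathcal{L}, B_R)$.

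The main obstacle is the case in which the touching point $P_0$ lies on the Neumann portion $I_R \times \{0\}$, where a direct Hopf-type argument runs into the issue that the boundary derivatives of $\psi$ and $\psi_R$ at $y=0$ are only compatible through an inequality, not equality. I would bypass this by symmetrizing both functions across $y = 0$: the extension $\tilde\psi_R$ is the Dirichlet eigenfunction on the full ball $B_R$, and the hypothesis $-\partial_y\psi|_{y=0} \leq 0$ is precisely what ensures the symmetrized $\tilde\psi$ remains a supersolution of $\mathcal{L} + \lambda$ across $y = 0$ in the distributional sense (the corner has the favorable sign). The touching point $P_0$ then becomes interior to $B_R$ and the SMP step applies without modification, completing the proof.
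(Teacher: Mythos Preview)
The paper does not prove this proposition at all; it is imported verbatim from \cite{econiches} (Proposition~2.4 there), so there is no in-paper argument to compare against. Your outline is the natural Berestycki--Nirenberg--Varadhan route, and the first two ingredients---domain monotonicity of $\lambda_1(-\mathcal L,B_R)$ and the inequality $\lambda^*\le\lambda_1(-\mathcal L,\Omega)$ obtained by passing the normalised $\psi_R$ to a locally uniform limit---are correct.

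The gap is in the reverse inequality, precisely at the Neumann touching point you flag. Your fix via even reflection does not work with the boundary inequality as literally written in \eqref{ch1lambda:L_Omega}: the condition $-\partial_y\psi|_{y=0}\le0$ means $\partial_y\psi(\cdot,0)\ge0$, so the even extension $\tilde\psi(x,y)=\psi(x,|y|)$ has a \emph{convex} kink along $\{y=0\}$ and the distributional Laplacian acquires $+2\,\partial_y\psi(\cdot,0)\,\delta_{\{y=0\}}\ge0$. Hence $(\mathcal L+\lambda)\tilde\psi$ picks up a nonnegative singular measure on $\{y=0\}$, which is the \emph{wrong} sign for the supersolution inequality $(\mathcal L+\lambda)\tilde\psi\le0$; the strong maximum principle cannot be applied across $y=0$ as you claim. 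The ``favourable sign'' you invoke would require $\partial_y\psi(\cdot,0)\le0$, i.e.\ $-\partial_y\psi|_{y=0}\ge0$. It is quite possible that \eqref{ch1lambda:L_Omega} contains a misprint---dropping the exchange terms in the road--field condition $B(\phi,\psi)\le0$ of \eqref{ch1def:lambda1_S_Omega} yields $d\,\partial_y\psi\le0$, i.e.\ the opposite inequality---in which case your symmetrisation goes through verbatim. But as written, this step is a genuine hole: you should either verify the intended sign in \cite{econiches} or replace the reflection by a direct boundary argument.
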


Another notion of generalised eigenvalue analysed in \cite{br} is the quantity 
\begin{equation}\label{ch1lambda:L_R2}
\begin{split}
\lambda_1(-\mathcal{L}, \R^2)=\sup \{ \lambda \in \R \ : \ \exists \psi \geq 0, \psi \not\equiv 0 \ \text{such that}  \
\mathcal{L}(\psi) + \lambda \psi \leq 0 \ \text{a.e on} \ \R^2 \}
\end{split}
\end{equation}
for test functions $\psi \in W_{loc}^{2,p}(\R^2)$. As stated in Proposition 2.2 of \cite{br},
$\lambda_1(-\mathcal{L}, \R^2)$ coincides with the limit of the sequence eigenvalues $\lambda_1(-\mathcal{L}, B_n)$.
By that and Proposition \ref{ch1prop:lim_B_R}, we have 
\begin{equation*}
\lambda_1(-\mathcal{L}, \R^2)=\lambda_1(-\mathcal{L}, \Omega)
\end{equation*}
With this notation, we can report the following affirmations deriving from Theorem 1.7 in \cite{br} for the case of a periodic reaction function:

\begin{theorem}[Theorem 1.7 in \cite{br}]\label{ch1thm:1.7inbr}
	Suppose $f$ satisfies \eqref{ch1hyp:per}.
	The following holds:
	\begin{enumerate}
		\item It holds that $\lambda_p(-\mathcal{L}, \R^2)\leq \lambda_1(-\mathcal{L}, \Omega)$.
		\item If $\mathcal{L}$ is self-adjoint (i.e, if $c=0$), then $\lambda_p(-\mathcal{L}, \R^2)=\lambda_1(-\mathcal{L}, \Omega)$.
	\end{enumerate}
\end{theorem}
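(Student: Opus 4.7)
The plan rests on combining two distinct characterizations of these eigenvalues: a pointwise super/sub-solution test in the definitions, and a Rayleigh quotient in the self-adjoint case.

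\textbf{Part 1.} I would test $\lambda_1(-\mathcal{L},\Omega)$ directly with the periodic eigenfunction $\psi_p$. By \eqref{ch1eq:-5}, we may take $\psi_p$ independent of $y$, so $\partial_y \psi_p|_{y=0}\equiv 0$, and the boundary condition in \eqref{ch1lambda:L_Omega} is satisfied trivially. The identity $\mathcal{L}\psi_p + \lambda_p(-\mathcal{L},\R^2)\psi_p = 0$ holds on all of $\R^2$, hence on $\Omega$. Thus $\psi_p$ is admissible in \eqref{ch1lambda:L_Omega} at $\lambda = \lambda_p(-\mathcal{L},\R^2)$, yielding $\lambda_1(-\mathcal{L},\Omega) \geq \lambda_p(-\mathcal{L},\R^2)$. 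This argument does not use $c=0$.

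\textbf{Part 2.} Assume $c=0$, so $-\mathcal{L}$ is self-adjoint; by Part 1 it remains to show $\lambda_1(-\mathcal{L},\Omega) \leq \lambda_p(-\mathcal{L},\R^2)$. I would invoke Proposition \ref{ch1prop:lim_B_R} together with the identification $\lambda_1(-\mathcal{L},\Omega) = \lambda_1(-\mathcal{L},\R^2) = \lim_{R\to\infty}\lambda_1(-\mathcal{L}, B_R)$ already recorded in the paper, and combine this with the standard Rayleigh characterization of the Dirichlet eigenvalue,
\[
\lambda_1(-\mathcal{L}, B_R) = \inf_{\varphi \in H^1_0(B_R)\setminus\{0\}} \frac{\int_{B_R}\bigl(d|\nabla\varphi|^2 + f_v(x,0)\varphi^2\bigr)}{\int_{B_R}\varphi^2}.
\]
I would plug in $\varphi_R = \chi_R \psi_p$ where $\chi_R$ is a smooth cutoff equal to $1$ on $B_{R-1}$, vanishing outside $B_R$, with $|\nabla \chi_R|$ uniformly bounded. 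Expanding $|\nabla \varphi_R|^2$ and integrating the identity $d\Delta \psi_p = (f_v - \lambda_p)\psi_p$ against the multiplier $\chi_R^2 \psi_p$, the cross terms cancel and the numerator of the Rayleigh quotient reduces to
\[
\lambda_p \int_{B_R}\chi_R^2 \psi_p^2 + d\int_{B_R}\psi_p^2 |\nabla \chi_R|^2.
\]
Since $\psi_p$ is positive and periodic, hence bounded below by a positive constant, the denominator $\int_{B_R}\chi_R^2 \psi_p^2$ grows like $R^2$, while the correction term, supported in the annulus $B_R \setminus B_{R-1}$, is $O(R)$. Hence the Rayleigh quotient of $\varphi_R$ is $\lambda_p + O(1/R)$, which gives $\lambda_1(-\mathcal{L}, B_R) \leq \lambda_p + O(1/R)$ and the desired inequality after passing to the limit.

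The main obstacle is confined to Part 2 and is really bookkeeping: one needs the boundary contribution of the cutoff, concentrated on a codimension-one annulus of area $O(R)$, to be dominated by the bulk term of order $R^2$, which in turn rests on the uniform positive lower bound for $\psi_p$ afforded by periodicity and the Krein--Rutman strict positivity. The assumption $c=0$ is indispensable for the Rayleigh formula; once a drift is present, $-\mathcal{L}$ is no longer symmetric, the variational characterization is lost, and $\lambda_p$ and $\lambda_1$ can genuinely differ, which is precisely why only the one-sided inequality of Part 1 holds in general.
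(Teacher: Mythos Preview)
The paper does not prove this statement: Theorem \ref{ch1thm:1.7inbr} is quoted verbatim from \cite{br} (Theorem 1.7 there) and used as a black box. There is therefore no ``paper's own proof'' to compare against.

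Your argument is correct. Part 1 is immediate once you observe that $\psi_p$ is admissible in \eqref{ch1lambda:L_Omega}. Part 2 is the standard cutoff trick: the annular error is $O(R)$ against a bulk of order $R^2$, and the uniform positive lower bound on $\psi_p$ that you invoke is exactly what makes the ratio vanish. The computation you sketch (integration by parts against $\chi_R^2\psi_p$, cancellation of the cross terms, residual $d\int\psi_p^2|\nabla\chi_R|^2$) goes through as written.

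It is worth noting that the paper carries out essentially the same Rayleigh-quotient-plus-cutoff argument in the proof of Theorem \ref{ch1thm:eq_1_p}, part 2, for the more delicate road-field eigenvalue $\lambda_1(\Omega)$ versus $\lambda_p(\Omega)$. There the test functions are $(\phi_r X_R,\psi_r Y_R)$ and the author must control the coupled quotient $Q_R$; the periodic eigenfunctions $(\phi_r,\psi_r)$ are not bounded below a priori, so instead of the clean $O(R)/O(R^2)$ estimate you give, the paper argues by contradiction that if the correction $P_R$ did not vanish along a subsequence, the mass $\alpha_R$ on successive annuli would grow geometrically, contradicting the linear bound $\alpha_R\le kR$. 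Your version for the scalar operator is simpler precisely because $\psi_p$ is globally bounded above and below, which short-circuits that contradiction step.
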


At last, we recall the following result on the signs of the eigenvalues for the systems with and without the road:
\begin{proposition}[Proposition 3.1 in \cite{econiches}]\label{ch1prop:ineq}
	It holds that
	\begin{equation*}
	\lambda_1(-\mathcal{L}, \Omega) \geq 0 \quad \Rightarrow \quad \lambda_1( \Omega) \geq 0.
	\end{equation*} 
\end{proposition}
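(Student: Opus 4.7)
The plan is to argue by contrapositive: supposing $\lambda_1(\Omega)<0$, I will show $\lambda_1(-\mathcal{L},\Omega)<0$. By Proposition~\ref{ch1prop:romain}, $\lambda_1(\Omega_R)\searrow\lambda_1(\Omega)$, so $\lambda_1(\Omega_R)<0$ for all sufficiently large $R$, and we are given a positive principal eigenpair $(u_R,v_R)$ satisfying \eqref{ch1sys:halfball}. It will suffice to prove the bounded-domain comparison
\[
\lambda_1(-\mathcal{L},B_R)\;\leq\;\lambda_1(\Omega_R),
\]
since, combined with Proposition~\ref{ch1prop:lim_B_R}, this yields $\lambda_1(-\mathcal{L},\Omega)\leq\lambda_1(-\mathcal{L},B_R)\leq\lambda_1(\Omega_R)<0$.

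My approach to the comparison is to manufacture from $(u_R,v_R)$ a positive supersolution $\tilde v$ of the Dirichlet eigenproblem~\eqref{ch1sys:L_BR} for $-\mathcal{L}$ on $B_R$ at the parameter $\lambda:=\lambda_1(\Omega_R)$. The natural ansatz is
\[
\tilde v(x,y):=v_R(x,|y|)+u_R(x)\,\chi(|y|)\qquad\text{on }B_R,
\]
with a cutoff $\chi\in C^2([0,R])$ satisfying $\chi\ge 0$, $\chi(R)=0$, $\chi(0)>0$ and $\chi'(0)<0$, to be tuned. The symmetric extension of $v_R$ already solves the PDE away from $\{y=0\}$ but produces, distributionally, a Dirac along the road with coefficient $2d\,\partial_y v_R|_{y=0}=2(\nu v_R|_{y=0}-\mu u_R)$, by the transmission condition $B(u_R,v_R)=0$; the correction $u_R\chi$ contributes the compensating Dirac $2du_R\chi'(0)$, so $\chi'(0)$ can be chosen to force the total boundary contribution to be nonpositive. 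In the interior one uses the road equation $Du_R''+cu_R'+\nu v_R|_{y=0}+(\lambda-\mu)u_R=0$ to rewrite $\mathcal{L}(u_R\chi)+\lambda u_R\chi$, aiming for a profile (e.g.\ $\chi(y)=\alpha(e^{-\beta y}-e^{-\beta R})$ with $\beta$ large enough) for which the interior inequality $\mathcal{L}\tilde v+\lambda\tilde v\le 0$ survives and $\tilde v\ge 0$ up to $\partial B_R$.

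The main difficulty is tuning $\chi$ so that all three requirements—the distributional sign at $\{y=0\}$, the pointwise interior inequality, and positivity—hold simultaneously, while the two diffusion coefficients $d$ and $D$ enter asymmetrically in the bookkeeping. If this direct construction proves too rigid, a backup route is the integration-by-parts identity
\[
\bigl(\lambda_1(-\mathcal{L},B_R)-\lambda_1(\Omega_R)\bigr)\int_{\Omega_R}v_R\psi_R\;=\;\int_{I_R}\psi_R(x,0)\bigl(\mu u_R-\nu v_R|_{y=0}\bigr)\,dx,
\]
obtained by testing $v_R$ against the principal eigenfunction $\psi_R$ of~\eqref{ch1sys:L_BR} and applying Green's formula with the transmission condition; using the road equation to recast the right-hand side reduces the problem to the sign analysis of a single integral, controllable by the positivity of the eigenfunctions and the maximum principle.
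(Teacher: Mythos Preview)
Your plan contains a direction error at its core. The bounded-domain comparison you aim for, $\lambda_1(-\mathcal{L},B_R)\le\lambda_1(\Omega_R)$, cannot hold in general: letting $R\to\infty$ it would force $\lambda_1(-\mathcal{L},\Omega)\le\lambda_1(\Omega)$, whereas Theorem~\ref{ch1thm:ineq} proves the \emph{reverse} inequality, and the paper explicitly remarks (after Corollary~\ref{ch1thm:ineq2}) that $\lambda_1(\Omega)\le\mu$ while $\lambda_1(-\mathcal{L},\Omega)$ can be made arbitrarily large. Worse, your supersolution construction---even if the tuning of $\chi$ went through---would establish the opposite of what you claim: a positive $\tilde v$ on $B_R$ satisfying $\mathcal{L}\tilde v+\lambda\tilde v\le 0$ is exactly a witness that $\lambda\le\lambda_1(-\mathcal{L},B_R)$ (this is the sup-characterisation underlying \eqref{ch1lambda:L_R2}), hence it would give $\lambda_1(\Omega_R)\le\lambda_1(-\mathcal{L},B_R)$.

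What the contrapositive actually requires is only $\lambda_1(\Omega_R)<0\Rightarrow\lambda_1(-\mathcal{L},B_R)<0$, and for this one should produce a \emph{subsolution} (or equivalently a test function with negative Rayleigh quotient), not a supersolution. In the self-adjoint case $c=0$ the clean route is variational: reflect $v_R$ evenly across $\{y=0\}$ to get $\tilde v\in H^1_0(B_R)$ and read off from the Rayleigh identity~\eqref{ch11207} for the road--field system that
\[
\nu\!\int_{\Omega_R}\!\bigl(d|\nabla v_R|^2-f_v(x,0)v_R^2\bigr)
=\lambda_1(\Omega_R)\Bigl(\mu\!\int_{I_R}\!u_R^2+\nu\!\int_{\Omega_R}\!v_R^2\Bigr)
-\mu D\!\int_{I_R}\!|u_R'|^2-\!\int_{I_R}\!(\mu u_R-\nu v_R|_{y=0})^2<0,
\]
whence $\lambda_1(-\mathcal{L},B_R)\le\int_{\Omega_R}(d|\nabla v_R|^2-f_v v_R^2)\big/\int_{\Omega_R}v_R^2<0$. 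Your backup identity is this same computation rewritten via Green's formula, but note that the road equation alone does not fix the sign of $\int_{I_R}\psi_R(x,0)(\mu u_R-\nu v_R|_{y=0})$; it is the full system Rayleigh quotient that forces it. For $c\neq 0$ the variational formula is unavailable and one must work with the adjoint eigenfunction or a direct maximum-principle argument, which is presumably what the cited proof in \cite{econiches} does; the present paper does not reproduce it.
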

This is the result that, in combination with Theorem \ref{ch1thm:ineq}, gives Corollary \ref{ch1thm:ineq2}.

\subsection{The periodic generalised principal eigenvalue for the road-field system}

We introduce here two new eigenvalues that will be useful in the following proofs. They are somehow of mixed type, in the sense that the domains in which they are defined are periodic in the variable $x$ and truncated in the variable $y$. Here, we require $f$ to be periodic as in hypothesis \eqref{ch1hyp:per}. 

Given $r>0$, let  $(\lambda_p(-\mathcal{L}, \R\times(-r, r)), \psi_{r})$ be the unique couple solving the eigenvalue problem
\begin{equation}\label{ch1sys:bary2}
\left\{
\begin{array}{ll}
\mathcal{L}(\psi_{r})  + \lambda \psi_{r} = 0, \qquad(x,y)\in \R \times (-r, r), \\
\psi_{r} (x, \pm r)=0, \qquad x\in \R, \\
||\psi_{r}||_{\infty}=1, \ \psi_{r} \ \text{is periodic in} \ x.
\end{array}
\right.
\end{equation}
The existence and uniqueness  of the solution to \eqref{ch1sys:bary2} derives once again from Krein-Rutman theory.

We point out that $\lambda_p(-\mathcal{L}, \R\times(-r,r))$ is decreasing in $r$ by inclusion of domains. So,
there exists a well defined number, that with a slight abuse of notation we call $\lambda_p (-\mathcal{L}, \Omega)$, such that
\begin{equation}\label{ch1eq:-2}
\lambda_p(-\mathcal{L}, \R\times(-r,r)) \underset{r\to+\infty}{\searrow} \lambda_p (-\mathcal{L}, \Omega).
\end{equation}

Given $r>0$, there exists a unique  value $\lambda_p( \R\times(0, r))\in\R$ such that the problem
\begin{equation}\label{ch1sys:r}
\left\{
\begin{array}{ll}
\mathcal{R}(\phi, \psi) +\lambda\phi = 0, \qquad x\in \R, \\
\mathcal{L}(\psi)  + \lambda \psi = 0, \qquad(x,y)\in \R \times (0, r), \\
B(\phi, \psi)= 0, \qquad x\in \R, \\
\psi (\cdot, r)=0, \\
\phi \ \text{and} \ \psi  \ \text{are periodic in} \ x,
\end{array}
\right.
\end{equation}
has a solution.
The proof of the existence can be derived by modifying for periodic functions the proof of the existence of $\lambda_1( \Omega_R)$ that is found in the Appendix of \cite{romain}.

Moreover, we define
\begin{equation*}
\begin{split}
\lambda_p ( \Omega)= \sup \{ \lambda \in \R \ : \ \exists (\phi,\psi)\geq (0,0), \  (\phi,\psi) \ \text{periodic in} \ x, \ \text{such that} \\
\mathcal{R}(\phi, \psi) +\lambda \phi \leq 0,  
\mathcal{L}(\psi) + \lambda \psi \leq 0, \ \text{and} \ B(\phi, \psi)\leq 0  \}
\end{split}
\end{equation*}
with test functions $(\phi,\psi) \in W_{loc}^{2,3}(\R)\times W_{loc}^{2,3}(\overline{\Omega})$. 

Then, we have:

\begin{proposition}\label{ch1prop:-3}
	Suppose $f$ satisfies \eqref{ch1hyp:per}.
	We have that
	\begin{equation}\label{ch1eq:-3}
	\lambda_p( \R\times(0,r)) \underset{r\to+\infty}{\searrow} \lambda_p ( \Omega).
	\end{equation}
	Moreover, there exists a couple $(u_p, v_p)\in W_{loc}^{2,3}(\R)\times W_{loc}^{2,3}(\overline{\Omega})$ of positive functions periodic in $x$ such that satisfy
	\begin{equation}\label{ch1sys:upvp}
	\left\{
	\begin{array}{ll}
	\mathcal{R}(u_p, v_p)+ \lambda_p ( \Omega)v_p=0, & x\in\R, \\
	\mathcal{L}v_p+ \lambda_p ( \Omega) v_p=0, & (x,y)\in\Omega, \\
	B(u_p, v_p)=0, & x\in\R.
	\end{array}
	\right.
	\end{equation}

\end{proposition}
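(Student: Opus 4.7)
The plan is to prove both assertions simultaneously: I will construct $(u_p,v_p)$ as a limit of the eigenpairs $(u_r,v_r)$ associated with $\lambda_p(\R\times(0,r))$ and identify the limiting eigenvalue with the supremum defining $\lambda_p(\Omega)$.

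First I would establish the monotonicity of $r\mapsto \lambda_p(\R\times(0,r))$ together with a uniform upper bound, giving a well-defined monotone limit $\Lambda\in\R$. Monotonicity comes from a standard domain-inclusion comparison: if $r_1<r_2$, the pair $(u_{r_2},v_{r_2})$ restricts to a strict positive supersolution of the eigenproblem on $\R\times(0,r_1)$, and a touching-point comparison with the eigenpair on the smaller domain (of the same type used in the appendix of \cite{romain}) forces $\lambda_p(\R\times(0,r_2))\leq\lambda_p(\R\times(0,r_1))$. The uniform upper bound $\lambda_p(\R\times(0,r))\leq\mu$ comes by testing against $(\phi,\psi)=(1,0)$ with $\lambda=\mu$ in the sup characterisation. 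These two facts give the monotone limit $\Lambda$.

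Next I would extract the limit pair $(u_p,v_p)$. Normalise the eigenfunctions by $v_r(0,0)=1$. A boundary Harnack inequality at $y=0$ adapted to the transmission condition $B(u_r,v_r)=0$, combined with interior Harnack and the periodicity in $x$ (which compactifies the horizontal direction), propagates this normalisation to uniform two-sided bounds on any fixed slab $[0,\ell]\times[0,R]$ for $r$ large enough; boundary Schauder estimates at $y=0$ then upgrade these to uniform $C^{2,\alpha}_{\mathrm{loc}}$ bounds on $(u_r,v_r)$. A diagonal extraction produces a subsequence converging locally uniformly to a pair $(u_p,v_p)\in W^{2,3}_{\mathrm{loc}}(\R)\times W^{2,3}_{\mathrm{loc}}(\overline\Omega)$, periodic in $x$, non-negative, and satisfying \eqref{ch1sys:upvp} with eigenvalue $\Lambda$. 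Since $v_p(0,0)=1$ the limit is non-trivial, and the strong maximum principle, applied in the interior of $\Omega$, via Hopf at $\{y=0\}$ and on the road equation, upgrades this to $v_p>0$ on $\overline\Omega$ and $u_p>0$ on $\R$.

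It remains to identify $\Lambda=\lambda_p(\Omega)$. The inequality $\lambda_p(\Omega)\geq\Lambda$ is immediate, since $(u_p,v_p)$ realises equality in \eqref{ch1sys:upvp} and is thus a valid competitor in the sup defining $\lambda_p(\Omega)$. For the reverse inequality I would invoke a Berestycki-Nirenberg-Varadhan type characterisation of $\lambda_p(\R\times(0,r))$ as the supremum of those $\lambda\in\R$ admitting a periodic positive sub-eigenpair on $\R\times(0,r)$, with no boundary value imposed at $y=r$; this reduces, via the $x$-periodicity, to the Krein-Rutman theory on the compact fundamental cell $[0,\ell]\times[0,r]$ with periodic conditions in $x$, Dirichlet at $y=r$, and the road coupling at $y=0$. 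Granting this, any admissible triple $(\phi,\psi,\lambda)$ for $\lambda_p(\Omega)$ restricts to an admissible triple on $\R\times(0,r)$, giving $\lambda\leq\lambda_p(\R\times(0,r))$, and passing to the limit in $r$ and then to the sup in $\lambda$ yields $\lambda_p(\Omega)\leq\Lambda$.

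The principal obstacle is the uniform (in $r$) boundary Harnack bound used in the compactness step, together with the sup characterisation of $\lambda_p(\R\times(0,r))$ for the coupled system: both rely on a careful handling of the transmission condition $B$ at $y=0$, and on running the touching-point argument on the compact fundamental cell in a way that respects simultaneously the coupling at $y=0$, the Dirichlet condition at $y=r$, and the periodicity in $x$. The periodicity in $x$ is essential throughout the argument, as it is what replaces the unbounded horizontal direction by a compact one and makes Krein-Rutman applicable.
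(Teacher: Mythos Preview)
Your proof plan matches the paper's argument closely: both extract the limiting eigenpair from the strip eigenfunctions via Harnack-based compactness, then identify the limit eigenvalue with $\lambda_p(\Omega)$ by the two-sided comparison you describe. Two minor points of difference. First, the paper handles the uniform boundary estimates at $y=0$ not by invoking an abstract boundary Harnack or boundary Schauder, but by an explicit device: multiply $\psi_r$ by $e^{-\nu y/d}$ to convert the transmission condition into a pure Neumann condition, subtract an affine-in-$y$ correction built from $\phi_r$, reflect evenly across $\{y=0\}$, and apply interior $W^{2,3}$ estimates on the full ball; this is combined with the system Harnack inequality of \cite[Theorem~2.3]{romain} (with the normalisation $\phi_r(0)+\psi_r(0,0)=1$) rather than a separate boundary Harnack. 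Second, your test pair $(\phi,\psi)=(1,0)$ does not satisfy $B(\phi,\psi)\leq 0$, since $B(1,0)=\mu>0$; and in any case a decreasing family needs a \emph{lower} bound, not an upper one, to have a finite limit---this follows e.g.\ from $\lambda_p(\R\times(0,r))\geq -\sup_x f_v(x,0)$ by testing with a positive constant $\psi$ and $\phi=\tfrac{\nu}{\mu}\psi$.
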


\begin{proof}
	By inclusion of domains, one has that $\lambda_p( \R\times(0, r))$ is decreasing in $r$.
	Let us call
	\begin{equation*}
	\bar{\lambda}:=\underset{r\to \infty}{\lim} \lambda_p( \R\times(0, r)).
	\end{equation*}
	
	\emph{Step 1}.
	We now want to show that there exists a couple $(\bar{\phi}, \bar{\psi})>(0,0)$, with $\bar{\phi}\in W_{loc}^{2,3}(\R)$ and $\bar{\psi}\in W_{loc}^{2,3}(\overline{\Omega})$, periodic in $x$, that satisfy
	\begin{equation}\label{ch11957}
	\left\{
	\begin{array}{ll}
	\mathcal{R}(\bar{\phi}, \bar{\psi})+ \bar{\lambda} \bar{\phi}=0, & x\in\R, \\
	\mathcal{L}( \bar{\psi})+ \bar{\lambda} \bar{\psi}=0, & (x,y)\in\Omega, \\
	B(\bar{\phi}, \bar{\psi})=0, & x\in\R.
	\end{array}
	\right.
	\end{equation}

	Fix $M>0$.
	First, for all $r>M+2$ consider the periodic eigenfunctions $(\phi_r, \psi_r)$ related to $\lambda_p( \R\times(0,r))$.
	We normalize $(\phi_r, \psi_r)$ so that
	\begin{equation*}
	\phi_r(0)+ \psi_r(0,0)=1.
	\end{equation*}
	
	Then, from the Harnack estimate in Theorem 2.3 of \cite{romain}, there exists $C>0$ such that
	\begin{equation}\label{ch11809}
	\max \{  \underset{I_{M+1}}{\sup} \phi_r, \ \underset{\Omega_{M+1}}{\sup} \psi_r  \} \leq C \min \{  \underset{I_{M+1}}{\inf} \phi_r, \ \underset{\Omega_{M+1}}{\inf} \psi_r  \} \leq C,
	\end{equation}
	where the last inequality comes from the normalization.
	We can use the interior estimate for $\phi_r$  and get
	\begin{equation*}
	|| \phi_r ||_{W^{2,3}(I_M)} \leq C' ( || \phi_r ||_{L^{3}(I_{M+1})}+ || \psi_r ||_{L^{3}(\Omega_{M+1})}   )
	\end{equation*}
	for some $C'$ depending on $M$, $\mu$, $\nu$, and $D$.
	By that and \eqref{ch11809}, we get
	\begin{equation}\label{ch11810}
	|| \phi_r ||_{W^{2,3}(I_M)} \leq C
	\end{equation}
	for a possibly different $C$.
	
	For $\psi_r$, in order to have estimates up to the border $y=0$ of $\Omega_M$, we need to make a construction. Recall that, calling $L:= \mathcal{L}+ \lambda_p( \R\times(0,r))$, $\psi_r$ solves
	\begin{equation*}
	\left\{
	\begin{array}{ll}
	L \psi_r =0, &  (x,y) \in \Omega_{M+1}, \\
	-d \partial_y \psi_r |_{y=0}  + \nu \psi_r|_{y=0}= \mu \phi_r, & x\in I_{M+1}.
	\end{array}
	\right.
	\end{equation*}
	We call
	\begin{equation*}
	\tilde{\psi}_r:= \psi_r e^{-\frac{\nu}{d}y}
	\end{equation*} 
	and the conjugate operator
	\begin{equation*}
	\tilde{L}(w):= e^{-\frac{\nu}{d}y} L\left( e^{\frac{\nu}{d}y} w \right).
	\end{equation*}
	Now, we have
	\begin{equation*}
	\left\{
	\begin{array}{ll}
	\tilde{L}\tilde{\psi}_r =0, &  (x,y) \in \Omega_{M+1}, \\
	-d \partial_y \tilde{\psi}_r |_{y=0}  = \mu \phi_r, & x\in I_{M+1}.
	\end{array}
	\right.
	\end{equation*}
	Next, calling
	\begin{equation}
	w_r(x,y)=\tilde{\psi}_r(x,y)- \frac{d}{\mu} \phi_r(x) y,  
	\end{equation}
	we have that
	\begin{equation}\label{ch11919}
	\left\{
	\begin{array}{ll}
	\tilde{L}w_r = -\dfrac{d}{\mu}  \tilde{L}( \phi_r(x) y), &  (x,y) \in \Omega_{M+1}, \\
	\partial_y {w_r}|_{y=0}  = 0, & x\in I_{M+1}.
	\end{array}
	\right.
	\end{equation}
	Now we define  in the open ball $B_{M+1}$ the function
	\begin{equation}\label{ch11911}
	\bar{w}_r(x,y):=w_r(x, |y|),
	\end{equation}
	that is the extension of $w_r$ by reflection; thanks to the Neumann condition in \eqref{ch11919} and the fact that ${w}_r \in W^{2,3}(\Omega_{M+1})$, we get that $\bar{w}_r \in W^{2,3}(B_{M+1})$. Also, we define the function 
	\begin{equation}\label{ch11912}
	g(x,y)= \frac{d}{\mu}  \tilde{L}( \phi_r(x) |y|).
	\end{equation}
	We also take the operator
	\begin{equation}\label{ch11913}
	\bar{L}w := d \Delta w+ c \partial_x w + 2{\nu} \sigma(y)\partial_y w + \left( f_v(x,0)+ \lambda_p( \R\times(0,r)) + \frac{\nu^2}{d} \right) w 
	\end{equation}
	where $\sigma(y)$ is the sign function given by
	\begin{equation*}
	\sigma(y) := \left\{
	\begin{array}{ll}
	1 & \text{if} \ y\geq 0, \\
	-1 & \text{if} \ y<0.
	\end{array}
	\right.
	\end{equation*}
	Thanks to the definition \eqref{ch11911}, \eqref{ch11912} and \eqref{ch11913}, we get that $\bar{w}_r $ is a weak solution to the equation
	\begin{equation}\label{ch11926}
	- \bar{L} \bar{w}_r = g \quad \text{for} \ (x,y)\in B_{M+1}.
	\end{equation}
	Finally, we can apply the interior estimates and get
	\begin{equation*}
	|| \bar{w}_r  ||_{W^{2,3}(B_M)} \leq C' ( || \bar{w}_r  ||_{L^{\infty}(B_{M+1})}+ || g ||_{L^{3}(B_{M+1})})
	\end{equation*}
	for some $C'$ depending on $M$ and the coefficients of the equation \eqref{ch11926}. But using the definition of $\bar{w}_r $ and the fact that $g$ is controlled by the norm of $\phi_r$,  we get, for a possible different $C'$,
	\begin{equation*}
	|| \bar{w}_r  ||_{W^{2,3}(B_M)} \leq C' ( || \psi_r  ||_{L^{\infty}(\Omega_{M+1})}+|| \phi_r  ||_{L^{\infty}(I_{M+1})}+ || \phi_r ||_{W^{2,3}(I_{M+1})}).
	\end{equation*}
	Using \eqref{ch11810} and \eqref{ch11809},
	we finally have 
	\begin{equation*}
	|| \psi_r  ||_{W^{2,3}(\Omega_M)} \leq C.
	\end{equation*}
	Thanks to that and \eqref{ch11810}, we have that $(\phi_r, \psi_r)$ is uniformly bounded in $W^{2,3}(I_M)\times W^{2,3}(\Omega_M)$ for all $M>0$.
	Hence, up to a diagonal extraction, $(\phi_r, \psi_r)$ converge weakly in $W_{loc}^{2,3}(I_M)\times W_{loc}^{2,3}(\Omega_M)$ to some $(\bar{\phi}, \bar{\psi}) \in W_{loc}^{2,3}(I_M)\times W_{loc}^{2,3}(\Omega_M)$. By Morrey inequality, the convergence is strong in $\mathcal{C}_{loc}^{1, \alpha}(\R)\times \mathcal{C}_{loc}^{1, \alpha}(\overline{\Omega})$ for $\alpha<1/6$. 
	Moreover, $(\bar{\phi}, \bar{\psi})$ are periodic in $x$ since all of the $(\phi_r, \psi_r)$ are periodic.
	Then, taking the limit of the equations in \eqref{ch1sys:r}, we obtain that $(\bar{\phi}, \bar{\psi})$ satisfy \eqref{ch11957}, as wished.

	\emph{Step 2.} We now prove that 
	\begin{equation}\label{ch11402}
	\bar{\lambda} \leq \lambda_p( \Omega).
	\end{equation}
	
	Take $\bar{\lambda}$ and 
	its associated periodic eigenfunctions couple $(\bar{\phi}, \bar{\psi})$ obtained in Step 1. 
	By definition, $\lambda_p( \Omega)$ is the supremum of the set 
	\begin{equation}\label{ch11747}
	\begin{split}
	\mathcal{A}:=  \{ \lambda \in \R \ : \ \exists (\phi,\psi)\geq (0,0), \  (\phi,\psi) \ \text{periodic in} \ x, \ 
	\mathcal{R}(\phi, \psi) +\lambda \phi \leq 0,  \\
	\mathcal{L}(\psi) + \lambda \psi \leq 0, \ \text{and} \ B(\phi, \psi)\leq 0  \}.
	\end{split}
	\end{equation}

	Then, using $(\bar{\phi}, \bar{\psi})$ as test functions, we obtain that $\bar{\lambda}$ is in the set $\mathcal{A}$ given in \eqref{ch11747}. By the fact that $\lambda_p( \Omega)$ is the supremum of $\mathcal{A}$, we get \eqref{ch11402}, as wished.
	
	\emph{Step 3}. We show 
	\begin{equation}\label{ch12006}
	\lambda_p(\Omega) \leq \bar{\lambda}.
	\end{equation}

	Now, take any $\lambda\in\mathcal{A}$ and one of its associate couple $(\phi, \psi)$. Then, by inclusion of domains, one gets that for all $r>0$ it holds 
	\begin{equation*}
	\lambda \leq \lambda_p( \R\times (0,r)).
	\end{equation*}
	Hence, by taking the supremum on the left hand side and the infimum on the right one, we get \eqref{ch12006}. By this and \eqref{ch11402}, equality is proven. Moreover, defining $(u_p, v_p)\equiv(\bar{\phi}, \bar{\psi})$, by \eqref{ch11957}, we have the second statement of the proposition.
\end{proof}

\section{Ordering of the eigenvalues}\label{ch1s:ordering}

This section is dedicated to show some inequalities and relations between the aforementioned eigenvalues.

\subsection{Proof of Theorem \ref{ch1thm:ineq}}

We start by proving Theorem \ref{ch1thm:ineq}.
We stress that this is done for the general setting of $c$ possibly non zero and $f(x,v)$ which may not be periodic.

\begin{proof}[Proof of Theorem \ref{ch1thm:ineq}] \quad
	
	Let us start by proving the first part of the theorem.
	For all $R>0$, there exists $R'>0$ and a point $C\in\R^2$ such that $B_R(C) \subset \Omega_{R'}$: it is sufficient to take $R'=3R$ and $C=(0, \frac{2}{3}R)$. We want to prove that
	\begin{equation}\label{ch11533}
	\lambda_1(-\mathcal{L}, B_R) \geq \lambda_1( \Omega_{R'}).
	\end{equation}
	Suppose by the absurd that \eqref{ch11533} is not true.
	Consider $\psi_R$ the eigenfunction related to $\lambda_1(-\mathcal{L}, B_R)$ and $v_{R'}$ the eigenfunction in the couple $(u_{R'},v_{R'})$ related to $\lambda_1( \Omega_{R'})$. 
	Since $\inf_{B_{R}(C)} v_{R'} >0$, and both eigenfunctions are bounded, there exists
	\begin{equation*}
	\theta^* := \sup \{  \theta\geq 0 \ : \ v_{R'}>\theta \psi_R \ \text{in} \   B_R(C)  \} >0.
	\end{equation*}
	Since $\theta^*$ is a supremum, then there exists $(x^*,y^*)\in \overline{B_R(C)}$ such that $v_{R'}(x^*, y^*)= \theta^* \psi_R (x^*, y^*)$. 
	Then,  $(x^*,y^*)\in {B_R(C)}$ because $v_{R'}>0$ and $\psi_R=0$ in $\partial B_R(C)$.
	Calling $\rho=v_{R'}-\theta^* \psi_R$, in a neighbourhood of $(x^*,y^*)$ we have that
	\begin{equation}\label{ch11602}
	-d \Delta\rho- c \cdot \nabla \rho - f_v(x,0)\rho=\lambda_1(-\mathcal{L}, B_R)\rho + (\lambda_1( \Omega_{R'}) - \lambda_1(-\mathcal{L}, B_R)) v_{R'}.
	\end{equation}
	We know that $\rho(x^*,y^*)=0$ and that $\rho \geq 0$ in $B_R(C)$. Then $(x^*,y^*)$ is a minimum for $\rho$, so $\nabla \rho(x^*,y^*) =0$ and $\Delta \rho (x^*,y^*) \geq 0$.
	Thus, the lefthandside of \eqref{ch11602} is non positive. But by the absurd hypotesis we have $(\lambda_1( \Omega_{R'}) - \lambda_1(-\mathcal{L}, B_R)) v_{R'}>0$. This gives
	\begin{equation*}
	0 \geq -d \Delta\rho(x^*,y^*) = (\lambda_1( \Omega_{R'}) - \lambda_1(-\mathcal{L}, B_R)) v_{R'} (x^*,y^*)>0,
	\end{equation*}
	which
	is a contradiction. With that we obtain that \eqref{ch11533} is true.
	
	Notice that the eigenvalue $\lambda_1(-\mathcal{L}, B_R(C))=\lambda_1(-\mathcal{L}, B_R)$, where $B_R$ is the ball centred in $(0,0)$, because $f(x,v)$ does not depend on $y$, thus system \eqref{ch1sys:L_BR} on $B_R(C)$ and $B_R$ are the same. As a consequence, also their eigenfunctions coincide.
	
	Recall that both $\lambda_1(-\mathcal{L}, \R^2)$ and $\lambda_1( \Omega)$ are limits of eigenvalues on limited domains, by Proposition \ref{ch1prop:lim_B_R}  and Proposition \ref{ch1prop:romain}.
	Now, since for all $R>0$ there exists $R'$ such that \eqref{ch11533} is true, then passing to the limit we find the required inequality.
	
	%
	%
	%
	
\end{proof}

\subsection{Further inequalities between the eigenvalues}

In this section, we collect some results on the ordering of periodic and generalised eigenvalues for both system \eqref{ch1sys:fieldroad} and eqaution \eqref{ch1eq:bhroques}.  
Here we require $f$ to be periodic as in \eqref{ch1hyp:per}.

This first result is the analogue of Theorem \eqref{ch1thm:1.7inbr} for the system \eqref{ch1sys:fieldroad}:

\begin{theorem}\label{ch1thm:eq_1_p}
	Suppose $f$ respects hypothesis \eqref{ch1hyp:per}. Then:
	\begin{enumerate}
		\item It holds that $\lambda_1( \Omega)\geq \lambda_p( \Omega)$.
		\item If moreover $c=0$, then we have $\lambda_1( \Omega)= \lambda_p( \Omega)$.
	\end{enumerate}
\end{theorem}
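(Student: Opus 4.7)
Part 1 is essentially by definition: any periodic pair $(\phi,\psi)\not\equiv(0,0)$ admissible in the supremum defining $\lambda_p(\Omega)$ is, in particular, a member of $W^{2,3}_{loc}(\R)\times W^{2,3}_{loc}(\overline{\Omega})$, hence is also admissible in \eqref{ch1def:lambda1_S_Omega}. The admissible set of $\lambda$ for $\lambda_p$ is therefore contained in that for $\lambda_1$, which gives $\lambda_p(\Omega)\leq\lambda_1(\Omega)$.

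For Part~2 the plan is to exploit the self-adjoint structure available when $c=0$ through a Rayleigh-quotient argument. A direct integration by parts shows that, with the weight $w=\mu/\nu$ on the road component, the coupled operator $(\phi,\psi)\mapsto(-\mathcal{R}(\phi,\psi),-\mathcal{L}\psi)$ subject to the Robin coupling $B(\phi,\psi)=0$ is symmetric on $L^2_w(I_R)\oplus L^2(\Omega_R)$: the boundary cross terms at $\{y=0\}$ generated by $\mathcal{R}$ and by $\mathcal{L}$ (the latter after substituting $d\partial_y\psi|_{y=0}=\nu\psi|_{y=0}-\mu\phi$) cancel upon swapping test pairs precisely because $w\nu=\mu$. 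Standard spectral theory then identifies $\lambda_1(\Omega_R)$---the Krein--Rutman eigenvalue furnished by Proposition~\ref{ch1prop:romain}---with the minimum of the Rayleigh quotient
\[
Q_R(\phi,\psi)=\frac{\tfrac{\mu D}{\nu}\int_{I_R}(\phi')^2\,dx+d\int_{\Omega_R}|\nabla\psi|^2+\int_{\Omega_R}f_v(x,0)\psi^2+\nu\int_{I_R}\bigl(\psi|_{y=0}-\tfrac{\mu}{\nu}\phi\bigr)^2\,dx}{\tfrac{\mu}{\nu}\int_{I_R}\phi^2\,dx+\int_{\Omega_R}\psi^2}
\]
over $(\phi,\psi)\in H_0^1(I_R)\times H^1(\Omega_R)$ with $\psi=0$ on the circular arc. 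The same manipulation on a cell $[0,\ell]\times[0,r]$, using $x$-periodic trial pairs and $\psi(\cdot,r)=0$, furnishes the analogous Rayleigh identity for $\lambda_p(\R\times(0,r))$.

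Fix $r>0$ and let $(\phi_r,\psi_r)$ be the positive periodic eigenfunction associated with $\lambda_p(\R\times(0,r))$ from Proposition~\ref{ch1prop:-3}, extended by zero past $\{y=r\}$ (this extension is in $H^1$ since $\psi_r(\cdot,r)=0$). Choose a smooth cutoff $\chi_R$ equal to $1$ on $[-R/4,R/4]$, supported in $[-R/2+1,R/2-1]$, with $|\chi_R'|\leq C/R$. For $R\geq 2r$ the pair $(\chi_R\phi_r,\chi_R\psi_r)$ is supported in $[-R/2+1,R/2-1]\times[0,r]\subset\overline{\Omega_R}$ and vanishes on the Dirichlet portion of $\partial\Omega_R$, so it is admissible in $Q_R$. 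Expanding $(\chi_R\phi_r)'^2$ and $|\nabla(\chi_R\psi_r)|^2$, the corrector terms involving $\chi_R'$ contribute $O(1)$ (since $|\chi_R'|\leq C/R$ is concentrated on a set of measure $O(R)$ where $(\phi_r,\psi_r)$ is bounded), while the principal $\chi_R^2$-integrals scale linearly in $R$ by the $x$-periodicity of $(\phi_r,\psi_r)$. The $R$-factors cancel between numerator and denominator, and invoking the Rayleigh identity satisfied by $(\phi_r,\psi_r)$ on one period gives $Q_R(\chi_R\phi_r,\chi_R\psi_r)\to\lambda_p(\R\times(0,r))$ as $R\to\infty$. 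Hence $\lambda_1(\Omega_R)\leq Q_R(\chi_R\phi_r,\chi_R\psi_r)$; passing $R\to\infty$ via Proposition~\ref{ch1prop:romain} yields $\lambda_1(\Omega)\leq\lambda_p(\R\times(0,r))$, and then $r\to\infty$ via \eqref{ch1eq:-3} gives $\lambda_1(\Omega)\leq\lambda_p(\Omega)$. The main obstacle is the first step---rigorously identifying $\lambda_1(\Omega_R)$ with the weighted variational formula---which depends on the self-adjointness check above and on the Courant characterisation of the principal eigenvalue under mixed Dirichlet/Robin boundary conditions; the truncation analysis is routine once this identity is in hand.
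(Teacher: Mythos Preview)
Your argument follows the paper's: Part~1 by set inclusion, and Part~2 by testing the Rayleigh quotient for $\lambda_1(\Omega_R)$ (which the paper invokes as Proposition~4.5 of \cite{romain}) against cut-off periodic eigenfunctions $(\phi_r,\psi_r)$, then sending $R\to\infty$ and $r\to\infty$ via Propositions~\ref{ch1prop:romain} and~\ref{ch1prop:-3}. There is a sign slip in your displayed $Q_R$: the $f_v(x,0)\psi^2$ term should carry a minus, as $\mathcal{L}$ contains $+f_v(x,0)\psi$. Where the executions diverge is in the truncation estimate. You take a stretched one-dimensional cutoff with $|\chi_R'|\leq C/R$, so the corrector terms are $O(1)$ against a $\Theta(R)$ denominator and the convergence $Q_R\to\lambda_p(\R\times(0,r))$ follows directly once one observes that $\chi_R^2$ is slowly varying on the scale of a period (so $\int\chi_R^2 g\sim\bar g\int\chi_R^2$ for periodic $g$). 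The paper instead uses a two-dimensional cutoff $Y_R$ with bounded gradient supported on a width-one annulus, derives the exact identity $Q_R(\phi_rX_R,\psi_rY_R)=\lambda_p(\R\times(0,r))+P_R$ by integrating by parts against the eigenvalue equations for $(\phi_r,\psi_r)$, and then proves $P_{R_n}\to0$ along a subsequence by contradiction: if $\liminf P_R=\xi>0$, the annular masses $\alpha_R=\mu\int_{I_R\setminus I_{R-1}}\phi_r^2+\nu\int_{\Omega_R\setminus\Omega_{R-1}}\psi_r^2$ would satisfy $\alpha_R\geq\varepsilon\sum_{n<R}\alpha_n$ and hence grow exponentially, contradicting the linear bound $\alpha_R\leq kR$. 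Your route is shorter; the paper's exact identity makes more explicit where the eigenvalue equations are used.
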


\begin{proof}
	1.
	By definition, $\lambda_p( \Omega)$ is the supremum of the set $\mathcal{A}$ given in \eqref{ch11747},
	while $\lambda_1( \Omega)$ is the supremum of the set
	\begin{equation*}
	\begin{split}
	\{ \lambda \in \R \ : \ \exists (\phi,\psi)\geq (0,0),  \ 
	\mathcal{R}(\phi, \psi) +\lambda \phi \leq 0,  \\
	\mathcal{L}(\psi) + \lambda \psi \leq 0, \ \text{and} \ B(\phi, \psi)\leq 0  \} \supseteq \mathcal{A}.
	\end{split}
	\end{equation*}
	By inclusion of sets, we have the desired inequality.
	
	\medskip
	
	2.
	We call $$\mathcal{H}_R:= H_0^1(I_R)\times H_0^1(\Omega_R \cup (I_R\cup \{0\}) ). $$
	For $(u,v)\in \mathcal{H}_R$, we define
	\begin{equation*}
	Q_R(u,v):= \frac{ \mu \int_{I_R} D |u'|^2 + \nu \int_{\Omega_R} (d|\nabla v|^2-f_v(x,0)v^2) +  \int_{I_R} (\mu u- \nu v|_{y=0})^2      }{\mu \int_{I_R}u^2 + \nu \int_{\Omega_R} v^2}.
	\end{equation*}

	Now we fix $r>0$ and we consider $\lambda_p( \R \times (0,r)  )$ ad its periodic eigenfunctions $(\phi_{r}, \psi_{r})$. We consider $\psi_{r}$ to be extended to $0$ in $\Omega \setminus (\R\times (0,r))$. This way we have $\psi_{r}\in H^1(\Omega_R \cup (I_R\cup \{0\}) )$.
	
	Then for all $R>1$ we choose a $\mathcal{C}^2(\overline{\Omega})$ function $Y_R:\overline{\Omega}\to [0,1]$ such that
	\begin{align*}
	Y_R(x,y)=1 & \qquad  \text{if} \ |(x,y)|<R-1; \\
	Y_R(x,y)=0 & \qquad  \text{if} \ |(x,y)|\geq R; \\ 
	|\nabla Y_R|^2 \leq C; & \hspace{5em} 
	\end{align*}
	where $C$ is a fixed constant independent of $R$. To simplify the notation later, we call $X_R(x):=Y_R(x,y)|_{y=0}$; we also have that $X_R\in\mathcal{C}^2(\R)$ and $|X_R''|\leq C$. We have that
	\begin{equation*}
	(\phi_{r} X_R, \psi_{r} Y_R) \in \mathcal{H}_R.
	\end{equation*}

	Now we want to show that for a suitable diverging sequence $\{R_n\}_{n\in\N}$ we have
	\begin{equation} \label{ch1Claim}
	Q_{R_n} (\phi_{r} X_{R_n}, \psi_{r} Y_{R_n}) \overset{n\to \infty}{\longrightarrow} \lambda_p( \R \times (0,r)  )).
	\end{equation}

	First, let us show a few useful rearrangements of the integrals that define $Q_R (\phi_{r} X_R, \psi_{r} Y_R)$. We have that
	\begin{align*}
	\int_{I_R}|(\phi_{r} X_R)'|^2 &= \int_{I_R} (\phi_{r} X_R)' \, \phi_{r} \,  X_R ' + \int_{I_R} (\phi_{r} X_R)'   \, \phi_{r} ' \, X_R, \\
	& = \int_{I_R} (\phi_{r} X_R)' \, \phi_{r} \,  X_R '  + \left[ (\phi_{r} X_R^2)  \, \phi_{r} ' \right]_{-R}^R-\int_{I_R}  (\phi_{r} X_R)  \, \left( \phi_{r} '' \, X_R + \phi_{r} ' \,  X_R' \right), \\
	&= \int_{I_R} \phi_{r}^2 \, |X_R '|^2 +  \left[ (\phi_{r} X_R^2)  \, \phi_{r} ' \right]_{-R}^R -\int_{I_R}  \phi_{r} '' \, \phi_{r} \, X_R^2 ,
	\end{align*}
	by having applied integration by parts on the second line and trivial computation in the others.
	Since $X_R(R)= X_R(-R)=0$ and $X_R '$ is supported only in $I_R \setminus I_{R-1}$, we get 
	\begin{equation}\label{ch1eq:parte2}
	\mu D\int_{I_R}|(\phi_{r} X_R)'|^2 = -\mu D\int_{I_R}  \phi_{r} '' \, \phi_{r} \, X_R^2  + \mu D \int_{I_R \setminus I_{R-1}} \phi_{r}^2 \, |X_R '|^2.
	\end{equation}	
	With similar computations we get 
	\begin{equation}\label{ch1eq:parte1}
	\int_{\Omega_R} d|\nabla (\psi_{r} \, Y_R)|^2 = - \int_{\Omega_R} d\Delta \psi_{r} \, \psi_{r} \, Y_R^2 - \int_{I_R} (d\partial_y \psi_{r}) \psi_{r} \, X_R^2 + \int_{\Omega_R \setminus {\Omega_{R-1} }} d|\nabla Y_R|^2 \psi_{r}^2.
	\end{equation}
	Then, we also have 
	\begin{equation} \label{ch1eq:parte3}
	\int_{I_R} (\mu \phi_{r} \, X_R - \nu \psi_{r} \, X_R)^2 = \int_{I_R} \mu \phi_{r} \, X_R^2 (\mu \phi_{r}- \nu \psi_{r}) - \int_{I_R} \nu \psi_{r} \, X_R^2 (\mu \phi_{r}- \nu \psi_{r}).
	\end{equation}
	We now recall that $(\phi_{r}, \psi_{r})$ is an eigenfunction for the problem \eqref{ch1sys:r}.
	Thanks to the third equation of \eqref{ch1sys:r}, the second term in \eqref{ch1eq:parte1} cancel out with the second term in \eqref{ch1eq:parte3}. Moreover we can sum the first term of \eqref{ch1eq:parte2} and the first term of \eqref{ch1eq:parte3} and get
	\begin{equation*}
	-\int_{I_R} \mu D \phi_{r} '' \, \phi_{r} \, X_R^2 + \int_{I_R} \mu \phi_{r} \, X_R^2 (\mu \phi_{r} - \nu \psi_{r}) = \int_{I_R} \mu \lambda_p( \R\times (0, r) ) \phi_{r}^2 \, X_R^2.
	\end{equation*}
	Moreover we have that 
	\begin{equation*}
	- \int_{\Omega_R} d\Delta \psi_{r} \, \psi_{r} \, Y_R^2 - \int_{\Omega_R} f_v(x,0) \psi_{r}^2 \, Y_R^2=  \int_{\Omega_R}  \lambda_p( \R\times (0, r) ) \psi_{r}^2 \, Y_R^2  .
	\end{equation*}
	So, if we call
	\begin{equation*}
	P_R := \frac{ \mu  \int_{I_R \setminus I_{R-1}} D\phi_{r}^2 \, |X_R '|^2 + \nu \int_{\Omega_R \setminus {\Omega_{R-1} }} d|\nabla Y_R|^2 \psi_{r}^2}{\mu \int_{I_R}(\phi_{r} X_R)^2 + \nu \int_{\Omega_R} (\psi_{r} Y_R)^2},
	\end{equation*}
	we have that 
	\begin{equation*}\label{ch10014}
	Q_R (\phi_{r} X_R, \psi_{r} Y_R) = \lambda_p( \R\times (0, r) ) + P_R.
	\end{equation*}
	Proving \eqref{ch1Claim} is equivalent to show that
	\begin{equation} \label{ch11604}
	P_{R_n} \overset{n\to \infty}{\longrightarrow} 0
	\end{equation} 
	for some diverging sequence $\{R_n\}_{n\in \N}$.
	Suppose by the absurd \eqref{ch11604} is not true. 
	First, by the fact that the derivatives of $X_R$ and $Y_R$ are bounded, for some positive constant $C$ we have that 
	\begin{equation*}
	0 \leq P_R \leq C \frac{ \mu  \int_{I_R \setminus I_{R-1}} \phi_{r}^2  + \nu \int_{\Omega_R \setminus {\Omega_{R-1} }} \psi_{r}^2}{\mu \int_{I_R}(\phi_{r} X_R)^2 + \nu \int_{\Omega_R} (\psi_{r} Y_R)^2}
	\end{equation*}
	By the absurd hypothesis,  we have that
	\begin{equation} \label{ch11652}
	\underset{R\to \infty}{\liminf} \, P_R = \xi >0.
	\end{equation}
	Now let us define for all $R\in \N$ the quantity
	\begin{equation*}
	\alpha_R:= \mu \int_{I_R\setminus I_{R-1}}\phi_{r} ^2 + \nu \int_{\Omega_R \setminus \Omega_{R-1}} \psi_{r}^2.
	\end{equation*}
	Since $\phi_r$ and $\psi_r$ are bounded from above, we have that for some constant $k$ depending on $r$, $\mu$, and $\nu$, we have
	\begin{equation}\label{ch1H}
	\alpha_R \leq k R.
	\end{equation}
	For $R\in \N$ one has
	\begin{equation*}
	\mu \int_{I_R}(\phi_{r} X_R)^2 + \nu \int_{\Omega_R} (\psi_{r} Y_R)^2 = \sum_{n=1}^{R-1} \alpha_n + \mu \int_{I_R \setminus I_{R-1}}(\phi_{r} X_R)^2 + \nu \int_{\Omega_R \setminus \Omega_{R-1}} (\psi_{r} Y_R)^2.
	\end{equation*}
	By comparison with \eqref{ch11652}, we have 
	\begin{equation*}
	\underset{R\to \infty}{\liminf} \, \frac{\alpha_R}{\sum_{n=1}^{R-1} \alpha_n} \geq  \underset{R\to \infty}{\liminf} \, \frac{ \alpha_R}{\sum_{n=1}^{R-1} \alpha_n + \mu \int_{I_R \setminus I_{R-1}}(\phi_{r} X_R)^2 + \nu \int_{\Omega_R \setminus \Omega_{R-1}} (\psi_{r} Y_R)^2} \geq \frac{\xi}{C},
	\end{equation*}
	so for $0<\varepsilon< \xi /C$ we have
	\begin{equation}\label{ch1G}
	\alpha_R > \varepsilon \sum_{n=1}^{R-1} \alpha_n
	\end{equation}
	Thanks to \eqref{ch1G} we perform now a chain of inequalities:
	\begin{equation*}
	\alpha_{R+1} > \varepsilon \sum_{n=1}^{R} \alpha_n = \varepsilon \left( \alpha_R + \sum_{n=1}^{R-1} \alpha_n \right) > \varepsilon(1+\varepsilon)\sum_{n=1}^{R-1} \alpha_n > \dots > (1+\varepsilon)^{R+1} \frac{\varepsilon \alpha_1}{(1+\varepsilon)^3} .
	\end{equation*}
	from with we derive that $\alpha_{R}$ diverges as an exponential, in contradiction with the inequality in \eqref{ch1H}. Hence we obtain that 
	\eqref{ch11604} is true, so \eqref{ch1Claim} is also valid.

	By Proposition 4.5 in \cite{romain}, we have that
	\begin{equation}\label{ch11207}
	\lambda_1( \Omega_R) = \underset{  \substack{(u,v)\in \mathcal{H}_R, \\ (u,v)\neq (0,0)}   }{\min} Q_R(u,v).
	\end{equation}
	Hence by \eqref{ch11207} we have that
	\begin{equation*}
	\lambda_1( \Omega_R) \leq Q_R (\phi_{r} X_R, \psi_{r} Y_R).
	\end{equation*}

	Since for all $r>0$ there exist $R>0$ so that \eqref{ch1Claim} holds, we have moreover that
	\begin{equation*}
	\lambda_1( \Omega) \leq \lambda_p( \R\times (0, r) ).
	\end{equation*}
	Then, recalling Proposition \ref{ch1prop:-3}, we get that 
	\begin{equation*}
	\lambda_1( \Omega) \leq \lambda_p( \Omega ).
	\end{equation*}
	Since the reverse inequality was already stated in the first part of this theorem, one has the thesis.
\end{proof}

At last, we prove this proposition of the bounds for $\lambda_p(-\mathcal{L},\Omega)$.

\begin{proposition}\label{ch1prop:Lp_Omega}
	Suppose $f$ satisfies \eqref{ch1hyp:per}.
	We have that
	\begin{equation*}
	\lambda_p(-\mathcal{L}, \R^2)
	\leq \lambda_p(-\mathcal{L}, \Omega) \leq 
	\lambda_1(-\mathcal{L}, \Omega) 
	\end{equation*}
	and if $c=0$ the equality holds.
\end{proposition}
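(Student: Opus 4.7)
My approach is to prove the two one-sided bounds via sliding/maximum-principle arguments modeled exactly on the proof of Theorem \ref{ch1thm:ineq}, and to conclude the equality in the case $c = 0$ by sandwiching with Theorem \ref{ch1thm:1.7inbr}.

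For the lower bound $\lambda_p(-\mathcal{L}, \R^2) \leq \lambda_p(-\mathcal{L}, \Omega)$, I fix $r > 0$ and aim to show $\lambda_r := \lambda_p(-\mathcal{L}, \R\times(-r,r)) \geq \lambda_p(-\mathcal{L}, \R^2)$; the claim then follows from \eqref{ch1eq:-2}. Let $\psi_p$ be the periodic eigenfunction from \eqref{ch1sys:L_R2_p}, which by \eqref{ch1eq:-5} is $y$-independent and therefore, by strict positivity and $\ell$-periodicity on the compact cell $[0,\ell]$, satisfies $\inf_{\R^2}\psi_p > 0$. Let $\psi_r$ be the eigenfunction of \eqref{ch1sys:bary2}, and suppose for contradiction $\lambda_r < \lambda_p(-\mathcal{L}, \R^2)$. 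Since $\psi_p$ is bounded below away from zero and $\psi_r$ vanishes on $\{y = \pm r\}$, the quantity
\[
\theta^* := \sup\{\theta \geq 0 \ :\ \psi_p \geq \theta\psi_r \text{ on } \R\times[-r,r]\}
\]
is finite and positive; by $x$-periodicity of both functions together with the Dirichlet condition on $\psi_r$, the supremum is attained at an interior point $(x^*, y^*) \in \R\times(-r,r)$ with $\psi_r(x^*, y^*) > 0$. Setting $w := \psi_p - \theta^*\psi_r$, the same computation as in the proof of Theorem \ref{ch1thm:ineq} yields $\mathcal{L}w(x^*, y^*) = (\lambda_r - \lambda_p(-\mathcal{L}, \R^2))\,\theta^*\psi_r(x^*, y^*) < 0$, while at the interior minimum $(x^*, y^*)$ one has $\mathcal{L}w(x^*, y^*) = d\Delta w(x^*, y^*) \geq 0$: a contradiction.

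For the upper bound $\lambda_p(-\mathcal{L}, \Omega) \leq \lambda_1(-\mathcal{L}, \Omega)$ the argument is analogous but carried out at finite scale. Given $R > 0$, I pick any $r \geq R$ so that $B_R \subset \R\times(-r,r)$. Then $\psi_r$ is strictly positive on $\overline{B_R}$ and solves $\mathcal{L}\psi_r + \lambda_r\psi_r = 0$ in $B_R$; sliding it against the Dirichlet eigenfunction of $\lambda_1(-\mathcal{L}, B_R)$ from \eqref{ch1sys:L_BR} exactly as above—the contact point is forced into the interior of $B_R$ since the Dirichlet eigenfunction vanishes on $\partial B_R$ while $\psi_r > 0$ there—I obtain $\lambda_r \leq \lambda_1(-\mathcal{L}, B_R)$. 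Letting first $r \to \infty$ via \eqref{ch1eq:-2} and then $R \to \infty$ via Proposition \ref{ch1prop:lim_B_R}, together with the identity $\lambda_1(-\mathcal{L}, \Omega) = \lambda_1(-\mathcal{L}, \R^2)$ recorded before Theorem \ref{ch1thm:1.7inbr}, delivers the upper bound. Finally, when $c = 0$, part 2 of Theorem \ref{ch1thm:1.7inbr} asserts $\lambda_p(-\mathcal{L}, \R^2) = \lambda_1(-\mathcal{L}, \Omega)$, so the two inequalities sandwich all three quantities into a single value. The only technical point requiring care in either sliding step is that $\theta^*$ be attained in the interior of the relevant domain, which is automatic here because in each case the Dirichlet function being compared against vanishes on the boundary while its competitor remains strictly positive there.
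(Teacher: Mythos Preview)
Your proof is correct and follows essentially the same approach as the paper: both establish the two inequalities by domain monotonicity of the eigenvalues $\lambda_p(-\mathcal{L},\R^2)\le\lambda_p(-\mathcal{L},\R\times(-r,r))$ and $\lambda_p(-\mathcal{L},\R\times(-r,r))\le\lambda_1(-\mathcal{L},B_R)$, then pass to the limit via \eqref{ch1eq:-2} and Proposition~\ref{ch1prop:lim_B_R}, and finally sandwich using Theorem~\ref{ch1thm:1.7inbr} when $c=0$. The only difference is that the paper invokes domain monotonicity as a black box, whereas you spell out the sliding argument explicitly in the style of Theorem~\ref{ch1thm:ineq}. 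One minor correction: in the second step you should take $r>R$ (strictly), not $r\ge R$, since otherwise $\psi_r$ vanishes at $(0,\pm R)\in\overline{B_R}$ and the claim that $\psi_r>0$ on $\overline{B_R}$ fails.
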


\begin{proof}
	Consider any $r>0$ and take $\lambda_p(-\mathcal{L}, \R\times(-r,r))$ and its eigenfunction $\psi_r$ solving \eqref{ch1sys:bary2}, that is periodic in $x$.
	Then take $\lambda_p(-\mathcal{L}, \R^2)$ and its periodic eigenfunction $\psi_p$, that as we have seen in \eqref{ch1eq:-5} does not depend on $y$, therefore it is limited and has positive infimum, and solves \eqref{ch1sys:L_R2_p}. Then, $\lambda_p(-\mathcal{L}, \R\times(-r,r))$ and $\lambda_p(-\mathcal{L}, \R^2)$ are eigenvalues for the same equation in two domains with one containing the other; hence, one gets that 
	\begin{equation}\label{ch11432}
	\lambda_p(-\mathcal{L}, \R^2)\leq \lambda_p(-\mathcal{L}, \R\times(-r,r)).
	\end{equation}
	By using \eqref{ch1eq:-2}, from \eqref{ch11432} we have
	\begin{equation}\label{ch11429}
	\lambda_p(-\mathcal{L}, \R^2)\leq \lambda_p(-\mathcal{L}, \Omega).
	\end{equation}
	
	Given $R<r$, we can repeat the same argument for $\lambda_1(-\mathcal{L}, B_R)$ and $\lambda_p(-\mathcal{L}, \R\times(-r,r))$  and get
	\begin{equation}\label{ch11433}
	\lambda_p(-\mathcal{L}, \R\times(-r,r)) \leq \lambda_1(-\mathcal{L}, B_R).
	\end{equation}
	By \eqref{ch1eq:-2} and Proposition \ref{ch1prop:lim_B_R}, we get
	\begin{equation*}
	\lambda_p(-\mathcal{L}, \Omega) \leq \lambda_1(-\mathcal{L}, \Omega).
	\end{equation*}
	This and \eqref{ch11429} give the first statement of the proposition.
	
	If $c=0$, by the second part of Theorem \ref{ch1thm:1.7inbr} we get that $\lambda_p(-\mathcal{L}, \R^2) =\lambda_1(-\mathcal{L}, \Omega)$, hence we have
	\begin{equation*}
	\lambda_p(-\mathcal{L}, \R^2)= \lambda_p(-\mathcal{L}, \Omega) =\lambda_1(-\mathcal{L}, \Omega),
	\end{equation*}
	as wished.
\end{proof}

\subsection{Proof of Theorem \ref{ch1thm:comparison}}

Owing Theorems \ref{ch1thm:char} and \ref{ch1thm:2.6inbhroques} together with the estimates on the eigenvalues proved in the last subsection, we are ready to prove Theorem \ref{ch1thm:comparison}.

\begin{proof}[Proof of Theorem \ref{ch1thm:comparison}]
	By Theorem \ref{ch1thm:1.7inbr}, we have that $\lambda_1(-\mathcal{L}, \R^2)=\lambda_p(-\mathcal{L}, \R^2)$. Then, by Corollary \ref{ch1thm:ineq2}, if $\lambda_1(\Omega)<0$ then
	$\lambda_p(-\mathcal{L}, \R^2)<0$, and if $\lambda_1(\Omega)\geq 0$ then
	$\lambda_p(-\mathcal{L}, \R^2)\geq 0$.
	
	Observe that, when $c=0$, choosing $N=2$ and $L=(\ell, 0)$, the operator $\mathcal{L'}$ defined in \eqref{ch1def:mathcal_L'} coincides with $\mathcal{L}$.
	Then, the affirmations on the asymptotic behaviour of the solutions of the system with and without the road comes from the characterisations in Theorem \ref{ch1thm:char} and \ref{ch1thm:2.6inbhroques}.
	
\end{proof}

\section{Large time behaviour for a periodic medium and $c=0$}\label{ch1s:lb}

We start considering the long time behaviour of the solutions. As already stated in Theorem \ref{ch1thm:char}, the two possibilities for a population evolving through \eqref{ch1sys:fieldroad} are persistence and extinction. We treat these two case in separate sections.

Before starting our analysis, we recall a comparison principle  first appeared in \cite{brr} that is fundamental for treating 
system \eqref{ch1sys:fieldroad}. We recall that a generalised subsolution (respectively, supersolution) is the supremum (resp. infimum) of two subsolutions (resp. supersolutions).

\begin{proposition}[Proposition 3.2 of \cite{brr}]\label{ch1prop:comparison}
	Let $(\underline{u}, \underline{v})$ and $(\overline{u}, \overline{v})$ be respectively a generalised subsolution bounded from
	above and a generalised supersolution bounded from below of \eqref{ch1sys:fieldroad} satisfying $\underline{u} \leq \overline{u}$ and $\underline{v} \leq \overline{v}$
	at $t = 0$. Then, either $\underline{u} \leq \overline{u}$ and $\underline{v} \leq \overline{v}$ for all $t$, or there exists $T > 0$ such that
	$(\underline{u}, \underline{v}) \equiv (\overline{u}, \overline{v})$ for $t\leq T$.
\end{proposition}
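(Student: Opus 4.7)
The plan is to subtract the sub- and super-solution inequalities to obtain a linear cooperative parabolic system for the difference, then combine a weak comparison (via an exponential shift) with the strong maximum principle and a Hopf-type argument tailored to the transmission boundary condition coupling the road and the field.

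First I would set $w:=\overline u-\underline u$ and $W:=\overline v-\underline v$. The Lipschitz continuity of $f(x,\cdot)$ uniform in $x$ lets me write $f(x,\overline v)-f(x,\underline v)=g(x,t)W$ with $\|g\|_{\infty}\leq L$. Subtracting the sub- from the super-solution inequalities then yields the linear cooperative system
\begin{align*}
\partial_t w - D w'' - c w' - \nu W|_{y=0} + \mu w &\geq 0, & x\in\R,\\
\partial_t W - d\Delta W - c\partial_x W - g(x,t)\, W &\geq 0, & (x,y)\in\Omega,\\
-d\,\partial_y W|_{y=0} + \nu W|_{y=0} - \mu w &\geq 0, & x\in\R,
\end{align*}
together with $w(\cdot,0)\geq 0$, $W(\cdot,\cdot,0)\geq 0$. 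For the genuinely generalised case I would handle the ``kink'' sets, where the supremum (resp.\ infimum) switches branch, by a standard test-function argument so that the inequalities above hold in an appropriate weak sense.

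Next I would prove weak nonnegativity via an exponential shift. Setting $\tilde w:=w+\varepsilon e^{Kt}$ and $\tilde W:=W+\varepsilon e^{Kt}$ with $\varepsilon>0$ and $K$ sufficiently large (e.g.\ $K>L+\mu+\nu$) makes each of the three inequalities strictly positive on the right-hand side. Since the original $w,W$ are bounded below (thanks to the one-sided bounds on $\overline u,\underline u,\overline v,\underline v$), and $\tilde w,\tilde W>0$ at $t=0$, a standard first-touch-time argument shows $(\tilde w,\tilde W)>0$ for all $t>0$: any would-be first touch would contradict either the interior strict parabolic inequality (for $\tilde W$ in $\Omega$), the strict road inequality (for $\tilde w$), or the strict exchange condition (if the touch occurs at $\{y=0\}$). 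Passing $\varepsilon\to 0$ gives $w,W\geq 0$.

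Finally, to upgrade to the claimed either/or conclusion, I would invoke the strong maximum principle. If $W$ vanishes at some $(x_0,y_0,t_0)$ with $y_0>0$, the parabolic strong maximum principle propagates $W\equiv 0$ on the connected component up to $t_0$; if the zero lies on $\{y=0\}$, Hopf's lemma yields $\partial_y W(x_0,0,t_0)>0$ unless $W\equiv 0$, and the exchange inequality forces $w(x_0,t_0)\leq 0$, hence $w(x_0,t_0)=0$ since we already have $w\geq 0$. The strong maximum principle on the road equation then propagates $w\equiv 0$ on $\R$ up to time $t_0$, and re-injecting this into the exchange condition and the field equation propagates $W\equiv 0$ on a short slab $[0,T]$, giving $(\underline u,\underline v)\equiv(\overline u,\overline v)$ there. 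The hard part will be precisely this last step: coordinating two strong maximum principles on spaces of different dimensions coupled only through the Robin-type exchange. The symmetric structure of the transmission ($\mu u$ versus $\nu v|_{y=0}$) and the cooperativity of the linearised system are essential, and the propagation likely has to be iterated in short time slabs.
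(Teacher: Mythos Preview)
The paper does not actually prove this proposition; it simply quotes Proposition~3.2 of \cite{brr} and remarks that the adaptation to an $x$-dependent $f$ is so minor as not to deserve repetition. Your outline is precisely the standard scheme behind such results (linearise the difference, use cooperativity, exponential shift for the weak principle, then strong maximum principle plus Hopf on the transmission condition for the dichotomy), and it is essentially what one finds in \cite{brr}.

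One genuine technical point you should not gloss over: your ``standard first-touch-time argument'' is not quite standard on the unbounded domain $\R\times\Omega$. At the putative first time $T$ where $\inf\tilde w=0$ or $\inf\tilde W=0$, the infimum need not be attained---the minimising sequence may run off to infinity in $x$ or $y$. The one-sided bounds you have only guarantee that the infima are finite, not that they are achieved. The usual remedies are either (i) to replace the constant perturbation $\varepsilon e^{Kt}$ by $\varepsilon e^{Kt}\Phi(x,y)$ with $\Phi$ a smooth positive function growing mildly at infinity and chosen so that the pair $(\Phi|_{y=0},\Phi)$ is itself a supersolution of the linearised system, which forces the infimum of the shifted functions to be attained on compact sets; or (ii) to argue by translation: take a minimising sequence $(x_n,y_n)$, shift the whole system by $x_n$ (and possibly $y_n$), use the $x$-periodicity of the coefficients and parabolic estimates to extract a limit problem on which the touch \emph{is} attained, and derive the contradiction there. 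Either route works, but neither is a one-liner, and this is the only place where the unboundedness of the domain really bites. Your treatment of the Hopf step at $\{y=0\}$ and the back-and-forth propagation between the road and field equations is correct.
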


The original proof is given for the case of $f$ homogeneous in space; however, it can be adapted with changes so small that we find it useless to repeat it.

Proposition \ref{ch1prop:comparison} gives us important informations on the behaviour at microscopic level. In fact, it asserts that if two pairs of population densities are ``ordered'' at an initial time, then the order is preserved during the evolution according to the equations in \eqref{ch1sys:fieldroad}.

\subsection{Persistence}\label{ch1ss:persistence}

The aim of this section is to prove the second part of Theorem \ref{ch1thm:char}.
First, we are going to show a Liouville type result, that is Theorem \ref{ch1lemma:pbss}, and then we will use that to derive the suited convergence.

We start with some technical lemmas.

\begin{lemma}\label{ch1lemma:converg}
	Let $(u,v)$ be a bounded stationary solution to \eqref{ch1sys:fieldroad} and let $\{(x_n, y_n) \}_{n\in\N}\subset \Omega$ be a sequence of points such that $\{ x_n\}_{n\in\N}$ modulo $\ell$ tends to some $x'\in[0,\ell]$. 
	Then:
	\begin{enumerate}
		\item if $\{ y_n\}_{n\in\N}$ is bounded,
		the sequence of function $\{(u_n, v_n)  \}_{n\in\N}$ defined as
		\begin{equation}\label{ch11648}
		(u_n(x), v_n(x, y))=(u(x+x_n), v(x+x_n, y))
		\end{equation}
		converges  up to a subsequence  to $(\tilde{u}, \tilde{v})$ in $\mathcal{C}_{loc}^2(\R\times\Omega)$ and $(\tilde{u}(x-x'), \tilde{v}(x-x',y)$ is a bounded stationary solution to \eqref{ch1sys:fieldroad}.
		\item if $\{ y_n\}_{n\in\N}$ is unbounded,
		the sequence of function $\{ v_n \}_{n\in\N}$ defined as
		\begin{equation}\label{ch11649}
		v_n(x, y)= v(x+x_n, y+y_n)
		\end{equation}
		converges  up to a subsequence  to $ \tilde{v}$ and $\tilde{v}(x-x', y)$  in $\mathcal{C}_{loc}^2(\R^2)$ is a bounded stationary solution to the second equation in  \eqref{ch1sys:fieldroad} in $\R^2$.
	\end{enumerate}
\end{lemma}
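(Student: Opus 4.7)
The plan is, in both cases, to apply local elliptic regularity to the translated functions, extract a convergent subsequence by compactness, and pass to the limit in the PDE, using the periodicity of $f$ in $x$ to reabsorb the translation. Writing $x_n = k_n \ell + \xi_n$ with $k_n \in \Z$ and $\xi_n \to x'$, hypothesis \eqref{ch1hyp:per} yields $f(x + x_n, v) = f(x + \xi_n, v)$, which by continuity of $f$ in its first argument converges, uniformly for $v$ in bounded sets, to $f(x + x', v)$. This way, once the convergence of the functions is established, the limiting equation is immediately identified.

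For Case 1, the translated $(u_n, v_n)$ defined in \eqref{ch11648} is uniformly bounded in $L^\infty$ and satisfies the stationary version of \eqref{ch1sys:fieldroad} with $c=0$ and nonlinearity $f(x+\xi_n, v_n)$. The interior of $\Omega$ is handled by standard interior $W^{2,p}$ estimates for $v_n$ and by the corresponding ODE estimate for $u_n$ (whose source $\nu v_n|_{y=0}$ is uniformly $L^\infty$-bounded, with $\mu > 0$ providing coercivity). The delicate point is the regularity \emph{up to the boundary} $\{y=0\}$, owing to the Robin-type exchange condition. I would follow the conjugation-and-reflection argument already used in Step 1 of the proof of Proposition \ref{ch1prop:-3}: the substitution $\tilde v_n := v_n e^{-\nu y / d}$ turns the Robin condition into an inhomogeneous Neumann one, and subtracting $\frac{d}{\mu} u_n(x)\, y$ followed by reflection across $\{y=0\}$ produces a function in $W^{2,p}_{loc}$ of a full ball, solving a uniformly elliptic equation with bounded coefficients. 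This yields uniform $W^{2,p}_{loc}$ bounds on $(u_n, v_n)$ up to the boundary. Compact embedding then gives convergence along a subsequence to $(\tilde u, \tilde v)$ in $\mathcal{C}^{1,\alpha}_{loc}$, and a Schauder bootstrap together with the $\mathcal{C}^1$ regularity of $f$ upgrades this to $\mathcal{C}^2_{loc}$. Passing to the limit identifies $(\tilde u, \tilde v)$ as a bounded stationary solution of \eqref{ch1sys:fieldroad} with nonlinearity $f(x + x', v)$; a final shift by $-x'$ in $x$ gives the stated solution.

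For Case 2, since $y_n \to +\infty$, the translated $v_n$ defined in \eqref{ch11649} satisfies the second equation of \eqref{ch1sys:fieldroad} on $\R \times (-y_n, +\infty)$, and any fixed compact subset of $\R^2$ is eventually contained in this domain. The boundary condition is therefore invisible in the interior limit, and a pure interior $W^{2,p}$ estimate applied to the uniformly bounded sequence $\{v_n\}$, followed by a diagonal extraction over an exhausting sequence of balls, yields $v_n \to \tilde v$ in $\mathcal{C}^{1,\alpha}_{loc}(\R^2)$; Schauder bootstrap then upgrades to $\mathcal{C}^2_{loc}$, and $\tilde v$ satisfies $-d \Delta \tilde v = f(x + x', \tilde v)$ on all of $\R^2$ with the same $L^\infty$ bound as $v$. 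Shifting by $-x'$ in $x$ concludes this case.

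The only nontrivial obstacle is the boundary regularity in Case 1; once it is settled by invoking the construction recalled from Proposition \ref{ch1prop:-3}, the rest is a standard compactness-and-passage-to-the-limit argument, enabled by the uniform $L^\infty$-bound on $(u,v)$ and the $\mathcal{C}^1$ regularity of $f$.
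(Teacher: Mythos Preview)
Your proposal is correct and follows essentially the same route as the paper: reduce the $x$-translation modulo $\ell$ using \eqref{ch1hyp:per}, obtain uniform $W^{2,p}$ bounds (handling the boundary in Case~1 via the conjugation--reflection trick of Proposition~\ref{ch1prop:-3}, exactly as the paper does), upgrade via Sobolev and Schauder, extract by compactness, and pass to the limit. The only cosmetic difference is that the paper invokes Schauder estimates for oblique boundary conditions (Theorem~6.30 in \cite{GT}) to reach $\mathcal{C}^{2,\alpha}$ directly after the $W^{2,p}$ step, whereas you phrase the final upgrade as a bootstrap; the content is the same.
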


\begin{proof}
	
	Let us call $V=\max\{ \sup u, \sup v  \}$.   
	For all $n\in\N$, there exists $x_n'\in[0,\ell)$ such that $x_n-x_n'\in\ell \Z$.
	
	We start with the case of bounded $\{y_n\}_{n\in\N}$.
	By the periodicity of $f$,
	we have that $(u_n, v_n)$ defined in \eqref{ch11648} is a solution to
	\begin{equation*}
	\left\{
	\begin{array}{lr}
	-D  u '' -c  u' - \nu  v|_{y=0} + \mu u= 0,   & x\in \R,  \\
	v -d \Delta v -c \partial_x v  =f(x+ x_n',v),  & (x, y)\in \Omega, \\
	-d  \partial_y{v}|_{y=0} + \nu v|_{y=0} -\mu u=0, & x\in\R,
	\end{array}
	\right.
	\end{equation*}
	
	Fix $p\geq 1$ and three numbers $j>h>k>0$; we use 
	the notation in \eqref{ch11722} for the sets $I_R$ and $\Omega_R$ for $R= k,\ h, \ j$.
	By Agmon-Douglis-Nirenberg estimates (see for example Theorem 9.11 in \cite{GT}), we have 
	\begin{equation*}
	\norm{u_n}_{W^{2,p}( I_h)} \leq C \left(  \norm{u_n}_{L^p(  I_j)} +\norm{v_n(x,0)}_{L^p(  I_j)}     \right).
	\end{equation*}
	To find the same estimate for the norm of $v_n$, we have to make the same construction used in the proof of Proposition \ref{ch1prop:-3} to find the bound for $\psi_r$. In the same way, we get
	\begin{equation*}
	\begin{split}
	\norm{v_n}_{W^{2,p}(  \Omega_h)} \leq C \Big(  \norm{u_n}_{L^p(  I_j)}   +\norm{v_n}_{L^p(  \Omega_j)} + \norm{f}_{L^p( I_j \times (0, V) )}     \Big) .
	\end{split}
	\end{equation*}
	where the constant $C$, possibly varying in each inequality, depends on $\nu$, $\mu$, $d$, $D$, $h$ and $j$. 
	Using the boundedness of $u$ and $v$, for a possible different $C$ depending on $f$ we get
	\begin{align*}
	\norm{u_n}_{W^{2,p}( I_h)} &\leq C V, \\
	\norm{v_n}_{W^{2,p}(  \Omega_h)} &\leq C V.
	\end{align*}

	Then, we apply the general Sobolev inequalities (see \cite{evans}, Theorem 6 in 5.6) and get for some $\alpha$ depending on $p$, that
	\begin{align*}
	\norm{u_n}_{\mathcal{C}^{\alpha}(  I_h)} & \leq C \norm{u_n}_{W^{2,p}(  I_h)}  \leq CV, \\
	\norm{v_n}_{\mathcal{C}^{\alpha}(  \Omega_h)} &\leq C \norm{v_n}_{W^{2,p}(   \Omega_h)}  \leq CV.
	\end{align*}

	Now we can apply Schauder interior estimates for the oblique boundary condition (see for example Theorem 6.30 in \cite{GT}) and find that 
	\begin{align*}
	\norm{u_n}_{\mathcal{C}^{2,\alpha}(I_k)} &\leq C \left(  \norm{u_n}_{\mathcal{C}^{\alpha}(I_h)} +\norm{v_n(x,0)}_{\mathcal{C}^{\alpha}(I_h)}     \right) \leq CV, \\
	\norm{v_n}_{\mathcal{C}^{2,\alpha}(\Omega_k)} &\leq C \Big(  \norm{u_n}_{\mathcal{C}^{\alpha}(I_h)} 
	+\norm{v_n}_{\mathcal{C}^{\alpha}(\Omega_h)} + \norm{f}_{\mathcal{C}^{\alpha}(I_h \times[0,V])}     \Big) \leq CV.
	\end{align*}
	So the sequences $\{u_n\}_{n\in\N}$ and $\{v_n\}_{n\in\N}$ are bounded locally in space in $C^{2,\alpha}$. By compactness we can extract converging subsequences with limits $\tilde{u}(x)$ and $\tilde{v}(x,y)$. Moreover, since by hypothesis $x_n'\to x'$ as $n\to+\infty$, we have that $(\tilde{u}, \tilde{v})$ is a solution
	\begin{equation*}
	\left\{
	\begin{array}{lr}
	-D  u '' -c  u' - \nu  v|_{y=0} + \mu u= 0,   & x\in \R,  \\
	v -d \Delta v -c \partial_x v  =f(x+ x',v),  & (x, y)\in \Omega, \\
	-d  \partial_y{v}|_{y=0} + \nu v|_{y=0} -\mu u=0, & x\in\R,
	\end{array}
	\right.
	\end{equation*}
	This concludes the proof of the first statement.
	
	Now suppose that $\{  y_n  \}_{n\in\N}$ is unbounded and, up to a subsequence, we can suppose that 
	\begin{equation}\label{ch11827}
	y_n \overset{n\to\infty}{\longrightarrow} +\infty.
	\end{equation}
	Then, 
	the function defined in \eqref{ch11649} solves the equation 
	\begin{equation*}
	-d\Delta v_n -c \partial_x v_n = f(x+x_n', v) \quad \text{for} \ (x,y)\in\R\times(-y_n,0) 
	\end{equation*}
	with the boundary condition $-d\partial_y v_n(x, y_n)  + \nu v_n(x, -y_n)- \mu u(x+x_n)=0$. 
	Fix $p\geq 1$ and three numbers $j>h>k>0$; we denote by $B_R$ the open ball of $\R^2$ centred in $(0,0)$ and with radius $R$, and we will consider $R=j, \ h, \ k$. Notice that by \eqref{ch11827} there exists $N\in\N$ we have that $y_n>j$ for all $n\geq N$.
	Hence, applying the previous estimates to $v_n$ for all $n\geq N$, we find that
	\begin{equation*}
	\begin{split}
	\norm{v_n}_{W^{2,p}(  B_h)} \leq C \Big(  \norm{v_n}_{L^p(  B_j)} + \norm{f}_{L^p( I_j \times (0, V) )}     \Big) \leq CV
	\end{split}
	\end{equation*}
	and then that
	\begin{equation*}
	\norm{v_n}_{\mathcal{C}^{2,\alpha}(B_k)} \leq C \Big(  \norm{v_n}_{\mathcal{C}^{\alpha}(B_h)} + \norm{f}_{\mathcal{C}^{\alpha}(I_h \times[0,V])}     \Big) \leq CV.
	\end{equation*}
	So the sequence $\{v_n\}_{n\in\N}$ is bounded locally in space in $C^{2,\alpha}(\R^2)$ and by compactness we can extract converging subsequence with limit $\tilde{v}(x,y)$, that satisfy
	\begin{equation*}
	-d\Delta v_n -c \partial_x v_n = f(x+x', v) \quad \text{for} \ (x,y)\in\R^2,
	\end{equation*}
	which gives the claim.
	
\end{proof}

The second lemma is similar to the first one, but treats a shifting in time.

\begin{lemma}\label{ch1lemma:mono}
	Let $(u,v)$ be a bounded solution to \eqref{ch1sys:fieldroad} which is monotone in time and let $\{ t_n\}_{n\in\N}\subset \R_{\geq 0}$ be a diverging sequence. Then, the sequence $\{(u_n, v_n)\}_{n\in \N}$ defined by
	\begin{equation}\label{ch11840}
	(u_n(t,x), v_n(t,x, y))=(u(t+t_n,x), v(t+t_n,x, y))
	\end{equation}
	converges in $C_{loc}^{1,2,\alpha}$ to a couple of functions $(\tilde{u}, \tilde{v})$ which is a stationary solution to \eqref{ch1sys:fieldroad}.
\end{lemma}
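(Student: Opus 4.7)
The plan is to combine monotone pointwise convergence with parabolic regularity, then pass to the limit in \eqref{ch1sys:fieldroad}. The autonomous nature of the system in $t$ is what makes the argument work: every shifted pair $(u_n,v_n)$ still solves the same system, and the shifts are free in time.

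First, since $u$ and $v$ are bounded and monotone in $t$, the pointwise limits
\begin{equation*}
\tilde{u}(x):=\lim_{t\to+\infty} u(t,x), \qquad \tilde{v}(x,y):=\lim_{t\to+\infty} v(t,x,y)
\end{equation*}
exist for every $(x,y)\in\R\times\overline{\Omega}$, and they are independent of $t$. In particular, for the diverging sequence $\{t_n\}$, the shifted functions defined by \eqref{ch11840} converge pointwise to $(\tilde u,\tilde v)$ uniformly in $t$ on bounded intervals.

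Next I would establish uniform $\mathcal{C}_{loc}^{1,2,\alpha}$ estimates on $\{(u_n,v_n)\}$. Fix a parabolic cylinder and apply parabolic Agmon--Douglis--Nirenberg estimates to the road equation, bounding $\|u_n\|_{W^{1,2;p}}$ in terms of $\|u_n\|_{L^p}$ and $\|v_n|_{y=0}\|_{L^p}$, which are controlled by the global bound on $(u,v)$. For the field equation, the exchange boundary condition is handled exactly as in the proof of Proposition \ref{ch1prop:-3}: conjugate $v_n$ by $e^{-\nu y/d}$, subtract the linear--in--$y$ correction $\tfrac{d}{\mu}\phi_r(x)y$ to kill the nonhomogeneous Neumann datum, and reflect evenly across $\{y=0\}$. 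This reduces the boundary estimate to an interior parabolic estimate on a ball, with a right-hand side that is already controlled. Sobolev embedding followed by parabolic Schauder estimates then upgrades these to uniform $\mathcal{C}^{1+\alpha/2,2+\alpha}$ bounds on compact sets, analogous to what is carried out in Lemma \ref{ch1lemma:converg} in the stationary framework.

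By Arzel\`a--Ascoli, any subsequence of $\{(u_n,v_n)\}$ admits a further subsequence converging in $\mathcal{C}^{1,2,\alpha}_{loc}$. The pointwise identification from the first step forces each limit point to coincide with $(\tilde u,\tilde v)$, so the full sequence converges in $\mathcal{C}^{1,2,\alpha}_{loc}$. Passing to the limit in \eqref{ch1sys:fieldroad} — using the $\mathcal{C}^1$ regularity of $f$ in $x$ and its Lipschitz dependence in $v$ to transfer the nonlinearity through the convergence — yields that $(\tilde u,\tilde v)$ satisfies \eqref{ch1sys:fieldroad}. Since the limit does not depend on $t$, one has $\partial_t\tilde u = \partial_t\tilde v = 0$, so $(\tilde u,\tilde v)$ is a stationary solution.

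The main technical obstacle is the parabolic regularity up to the road $\{y=0\}$, where the coupling between $u$ and $v$ in the exchange condition prevents a direct application of classical oblique boundary theory. The exponential conjugation and even reflection trick from Proposition \ref{ch1prop:-3} is precisely what removes this obstruction; once those estimates are in hand, the rest of the argument is a standard compactness--passage to the limit scheme made rigid by the monotonicity that pins down the limit without needing to select subsequences.
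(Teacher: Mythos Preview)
Your proposal is correct and follows essentially the same route as the paper: pointwise monotone convergence to define $(\tilde u,\tilde v)$, parabolic $W^{1,2}_p$ estimates for $u_n$, the conjugation--subtraction--reflection trick from Proposition~\ref{ch1prop:-3} to get estimates for $v_n$ up to $\{y=0\}$, Sobolev embedding and parabolic Schauder to reach $\mathcal{C}^{1,2,\alpha}_{loc}$ bounds, then compactness plus the pointwise identification to conclude. The only cosmetic slip is that in the reflection step the linear-in-$y$ correction should involve $u_n$ rather than the fixed eigenfunction $\phi_r$, but the mechanism is identical.
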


\begin{proof}
	We call $V=\max \{ \sup u, \sup v   \}$.
	For every fixed $x\in \R$  we have that $u_n(t,x)$ is an monotone bounded sequence. Then, we can define a function $\tilde{u}(x)$ as 
	\begin{equation}\label{ch11720}
	\tilde{u}(x) = \underset{n\to\infty}{\lim} {u_n(t,x)}
	\end{equation}
	and $0\leq \tilde{u}(x)\leq U$. 
	Analogously, for all $(x,y)\in \Omega$  we can define
	\begin{equation}\label{ch11721}
	\tilde{v}(x,y) = \underset{n\to\infty}{\lim} {v_n(t,x,y)}
	\end{equation}
	and $0 \leq \tilde{v}(x,y)\leq V$. 
	
	Fix $p\geq 1$, $T>0$ and three numbers $k<h<j$; we use 
	the notation in \eqref{ch11722} for the sets $I_R$ and $\Omega_R$ for $R= k,\ h, \ j$. 
	For $S$ an open subset in $\R^N$, in this proof we denote the space of function with one weak derivative in time and two weak derivatives in space by $W_p^{1,2}(S)$. 
	By Agmon-Douglis-Nirenberg estimates we have 
	\begin{equation*}
	\norm{u_n}_{W^{1,2}_p(  I_h)} \leq C \left(  \norm{u_n}_{L^p((0,T)\times I_j)} +\norm{v_n(t,x,0)}_{L^p((0,T)\times I_j)}     \right) \leq CV.
	\end{equation*}
	To find the same estimate for the norm of $v_n$, we have to make the same construction used in the proof of Proposition \ref{ch1prop:-3} to find the bound for $\psi_r$. In the same way, we get
	\begin{equation*}
	\begin{split}
	\norm{v_n}_{W^{1,2}_p((0,T)\times \Omega_h)} \leq C \Big(  \norm{u_n}_{L^p((0,T)\times I_j)} 
	+\norm{v_n}_{L^p((0,T)\times \Omega_j)} + \norm{f}_{L^p( I_j \times (0, V) )}     \Big) \leq CV.
	\end{split}
	\end{equation*}
	where the constant $C$, possibly varying in each inequality, depends on $\nu$, $\mu$, $d$, $D$, $T$, $h$ and $j$. Then, we apply the general Sobolev inequalities (see \cite{evans}, Theorem 6 in 5.6) and get for some $\alpha$ depending on $p$, that
	\begin{align*}
	\norm{u_n}_{\mathcal{C}^{\alpha}((0,T)\times I_h)} & \leq C \norm{u_n}_{W^{1,2}_p((0,T)\times I_h)}  \leq CV, \\
	\norm{v_n}_{\mathcal{C}^{\alpha}((0,T)\times \Omega_h)} &\leq C \norm{v_n}_{W^{1,2}_p((0,T)\times  \Omega_h)}  \leq CV.
	\end{align*}
	Moreover, since for $n\in \N$ the functions $u_n$ and $v_n$ are just time translation of the same functions $\tilde{u}$ and $\tilde{v}$, we also have that
	\begin{align*}
	\norm{u_n}_{\mathcal{C}^{\alpha}((0,+\infty)\times I_h)} &\leq CV, \\
	\norm{v_n}_{\mathcal{C}^{\alpha}((0,+\infty)\times  \Omega_h)} &\leq CV.
	\end{align*} 
	Now we can apply Schauder interior estimates (see for example Theorem 10.1 in Chapter IV of \cite{ladyzhenskaya}) and find that 
	\begin{align*}
	\norm{u_n}_{\mathcal{C}^{1,2,\alpha}((0,+\infty)\times I_k)} &\leq C \left(  \norm{u_n}_{\mathcal{C}^{\alpha}((0,+\infty)\times I_h)} +\norm{v_n(t,x,0)}_{\mathcal{C}^{\alpha}((0,+\infty)\times I_h)}     \right) \leq CV, \\
	\norm{v_n}_{\mathcal{C}^{1,2,\alpha}((0,+\infty)\times \Omega_k)} &\leq C \Big(  \norm{u_n}_{\mathcal{C}^{\alpha}((0,+\infty)\times I_h)} + \\ &\hspace{5em} 
	+\norm{v_n}_{\mathcal{C}^{\alpha}((0,+\infty)\times \Omega_h)} + \norm{f}_{\mathcal{C}^{\alpha}(I_h \times[0,V])}     \Big) \leq CV.
	\end{align*}
	So the sequences $\{u_n\}_{n\in\N}$ and $\{v_n\}_{n\in\N}$ are bounded locally in space in $C^{1,2,\alpha}$. By compactness we can extract converging subsequences with limits $q(t,x)$ and $p(t,x,y)$ that satisfy system \eqref{ch1sys:fieldroad}. But as said in \eqref{ch11720} and \eqref{ch11721} the sequences
	$\{u_n\}$ and $\{v_n\}$ also converge punctually to $\tilde{u}$ and $\tilde{v}$, that are stationary functions. Then, the couple $(\tilde{u}, \tilde{v})$ is a positive bounded stationary solution of system \eqref{ch1sys:fieldroad}.
\end{proof}

The following lemma gives essentials information on the stationary solutions, on which the uniqueness result of Theorem \ref{ch1lemma:pbss} will rely on.

\begin{lemma}\label{ch1lemma:pos_inf}
	Suppose that $c=0$, $f$ satisfies \eqref{ch1hyp:0}-\eqref{ch1hyp:per} and that
	$\lambda_1( \Omega)<0$. Then, every stationary bounded solution $(u, v)\not\equiv(0,0)$ of system \eqref{ch1sys:fieldroad} respects
	\begin{equation*}
	\underset{\R}{\inf} \, u >0, \quad \underset{\Omega}{\inf} v>0.
	\end{equation*}
\end{lemma}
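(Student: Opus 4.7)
The plan is to first establish strict positivity of $(u,v)$ everywhere on $\R \times \overline{\Omega}$, and then to upgrade this to uniform positivity by a sliding argument based on the shifted Dirichlet eigenfunctions of $\Omega_R$.

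\textbf{Step 1: strict positivity.} Writing the field equation as $d\Delta v + g(x,v)v = 0$ with $g(x,v):=f(x,v)/v$ bounded (extended by $f_v(x,0)$ at $v=0$), the strong maximum principle gives either $v \equiv 0$ in $\Omega$ or $v>0$ in $\Omega$. If $v \equiv 0$, the road equation reduces to $-Du'' + \mu u = 0$, whose only bounded solution is $u \equiv 0$, contradicting $(u,v)\not\equiv(0,0)$. So $v>0$ in $\Omega$. Next, if $u(x_0)=0$ for some $x_0$, then $x_0$ is a minimum of $u\geq 0$ and the road equation forces $v(x_0,0)=0$; Hopf's lemma at $(x_0,0)$ combined with the boundary exchange condition $B(u,v)=0$ yields a contradiction, so $u>0$ on $\R$. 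Finally, if $v(x_0,0)=0$ for some $x_0$, the same combination of Hopf and $B(u,v)=0$, now using $u(x_0)>0$, rules this out, so $v>0$ on $\overline{\Omega}$.

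\textbf{Step 2: a uniformly shifted sliding estimate.} By Proposition \ref{ch1prop:romain}, fix $R>\ell$ with $\lambda_1(\Omega_R)<0$ and positive Dirichlet eigenfunctions $(u_R,v_R)$ solving \eqref{ch1sys:halfball}. Using $\lambda_1(\Omega_R)<0$ and the strict KPP hypothesis \eqref{ch1hyp:KPP}, a standard computation yields a largest $\eta^{**}>0$ such that, for every $\eta\in(0,\eta^{**}]$, the pair $\eta(u_R,v_R)$ is a strict stationary subsolution of \eqref{ch1sys:fieldroad} on $\Omega_R$ (extended by $0$ outside). By the $\ell$-periodicity \eqref{ch1hyp:per}, for every $k\in\Z$ the shifted pair $(u_R^k,v_R^k):=(u_R(\cdot-k\ell),v_R(\cdot-k\ell,\cdot))$ on $\Omega_R^k:=\Omega_R+(k\ell,0)$ inherits the same subsolution property with the \emph{same} $\eta^{**}$. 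Define $\theta_k^* := \sup\{\theta>0 : \theta(u_R^k,v_R^k)\leq(u,v) \text{ on } \Omega_R^k\}$; Step~1 and compactness of $\overline{\Omega_R^k}$ give $\theta_k^*>0$. Suppose $\theta_k^*<\eta^{**}$. The inequality is tight at some $P^*\in\overline{\Omega_R^k}$, which cannot lie on the outer Dirichlet boundary since there $(u_R^k,v_R^k)=0$ while $(u,v)>0$. Three cases remain: $P^*$ interior to $\Omega_R^k\cap\Omega$, $P^*$ on the road with $u$-equality, or $P^*$ on the road with $v$-equality. In each case, either the strong maximum principle applied to $v-\theta_k^* v_R^k$ or $u-\theta_k^* u_R^k$, or Hopf's lemma combined with $B(u,v)=0=B(u_R^k,v_R^k)$, contradicts the minimum; the strict subsolution property inherited from $\lambda_1(\Omega_R)<0$ is what produces a strictly positive right-hand side in each case. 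So $\theta_k^*\geq\eta^{**}$ for every $k\in\Z$.

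\textbf{Step 3: contradiction from a minimizing sequence.} Suppose $\inf_\R u = 0$ or $\inf_\Omega v = 0$ and choose $(x_n,y_n)$ with $u(x_n)\to 0$ (taking $y_n=0$) or $v(x_n,y_n)\to 0$. If $y_n$ stays bounded by some $M<R$, pick $k_n$ with $x_n-k_n\ell\in[0,\ell]\subset I_R$; Step~2 gives $u(x_n)\geq \eta^{**}\min_{[0,\ell]} u_R > 0$ or $v(x_n,y_n)\geq \eta^{**}\min_{[0,\ell]\times[0,M]}v_R>0$, contradicting the minimizing condition. If instead $y_n\to+\infty$ (only possible in the $v$-case), use the scalar Dirichlet eigenvalue $\lambda_1(-\mathcal{L},B_R)$, which is negative for $R$ large by Proposition \ref{ch1prop:lim_B_R} and Corollary \ref{ch1thm:ineq2}; an analogous scalar sliding argument on the ball $B_R(x_n,y_n)\subset\Omega$ gives $\sup_{B_R(x_n,y_n)} v \geq \eta^{***}>0$, uniformly in $n$ by compactness of $[0,\ell]$ and the translation invariance in $y$ of $\mathcal{L}$. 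On the other hand, Lemma \ref{ch1lemma:converg}(2) applied to $v_n(x,y):=v(x+x_n,y+y_n)$ shows that $v_n\to\tilde v$ in $\mathcal{C}^2_{loc}(\R^2)$ for some bounded nonnegative stationary solution $\tilde v$ on $\R^2$ with $\tilde v(0,0)=0$; by the strong maximum principle $\tilde v\equiv 0$, so $\sup_{B_R(x_n,y_n)} v\to 0$, a contradiction.

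The main obstacle is the sliding argument of Step~2, whose case analysis hinges on the subtle interaction between the interior strong maximum principle, Hopf's lemma at the road, and the exchange condition $B(u,v)=0$, all tied together by the strict subsolution property provided by $\lambda_1(\Omega_R)<0$ and strict KPP. A secondary point is that $\eta^{**}$ (and $\eta^{***}$ in Step~3) must be chosen \emph{uniformly} in the shift index $k$ (respectively in $n$), which follows from the $\ell$-periodicity of $f$ and compactness in $[0,\ell]$.
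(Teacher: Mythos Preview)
Your proof is correct and shares the same skeleton as the paper's: strict positivity via the strong maximum principle and Hopf's lemma, then a sliding argument in $x$ using the translated Dirichlet eigenfunctions $(u_R^k,v_R^k)$ to get a uniform lower bound on $u$ and on $v$ in any horizontal strip. The paper's Step~1 is precisely your Step~2, with the same touching/contradiction mechanism (the paper phrases it slightly more loosely, but the content is identical).

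The genuine difference is in how the lower bound is pushed to $y\to+\infty$. The paper uses the \emph{periodic strip eigenvalue} $\lambda_p(-\mathcal{L},\R\times(-r,r))<0$ and its $x$-periodic eigenfunction $\psi_r$: being periodic in $x$, this subsolution automatically gives uniform control in the horizontal direction, and one slides it upward in $y$, defining $h^*=\sup\{h:\theta\psi_r(\cdot,\cdot-h)\le v\}$ and showing $h^*=+\infty$ by a contact argument combined with Lemma~\ref{ch1lemma:converg}. You instead use \emph{ball eigenfunctions} $\lambda_1(-\mathcal{L},B_R(x_n,y_n))<0$, whose negativity must be made uniform in $x_n\bmod\ell$ by compactness, and then close via the limiting solution $\tilde v\equiv 0$ from Lemma~\ref{ch1lemma:converg}. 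Both routes ultimately invoke Lemma~\ref{ch1lemma:converg}; the paper's periodic-strip subsolution is tailored to the periodic setting and avoids the extra compactness step over $[0,\ell]$, while your argument is more ``model-independent'' and would adapt to non-periodic coefficients at the cost of that uniformity check. One small point: in your Step~3 you state the sliding on $B_R(x_n,y_n)$ gives $\sup_{B_R(x_n,y_n)}v\ge\eta^{***}$; in fact it gives the stronger pointwise bound $v\ge\eta^{***}\psi_R$ on the ball, so the detour through $\tilde v\equiv 0$ is not strictly needed---evaluating at the centre already contradicts $v(x_n,y_n)\to 0$ once you note that the centre value of the (normalised) eigenfunction is bounded below uniformly in $x_n\bmod\ell$.
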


\begin{proof}
	
	\emph{Step 1: sliding in $x$.}
	If $\lambda_1( \Omega)<0$, thanks to Proposition \ref{ch1prop:romain} there exists $R>0$ such that $\lambda_1( \Omega_R)<0$. 
	Since $\lambda_1( \Omega_R)$ is monotonically decreasing in $R$, 
	we can suppose that $R> \ell$.
	By a slight abuse of notation, let us call $(u_R,v_R)$ the eigenfunctions associated with $\lambda_1( \Omega_R)<0$ extended to 0 in $\R \times \Omega \setminus (I_R\times \Omega_R)$.
	
	We claim that there exists $\varepsilon>0$ such that $\varepsilon(u_R,v_R)$ is a subsolution for system \eqref{ch1sys:fieldroad}.
	In fact, we have that
	\begin{equation*}
	\underset{v\to 0^+}{\lim} \dfrac{f(x,v)}{v} = f_v(x,0),
	\end{equation*}
	so for $\varepsilon$ small enough we have that
	\begin{equation}\label{ch12220}
	\dfrac{f(x,\varepsilon v_R)}{\varepsilon v_R} > f_v(x,0) + \lambda_1( \Omega_R).
	\end{equation}
	Then, 
	\begin{equation}\label{ch12221}
	\left\{
	\begin{array}{ll}
	-D \varepsilon u_R '' -c \varepsilon  u_R' - \nu \varepsilon v_R|_{y=0} +\mu \varepsilon u_R=\lambda_1( \Omega_R ) \varepsilon u_R \leq 0,   & x\in I_R,  \\
	-d \Delta \varepsilon v_R -c \partial_x \varepsilon v_R  =(f_v(x,0)+ \lambda_1( \Omega_R) \varepsilon v_R \leq f(x, \varepsilon v_R),  & (x, y)\in \Omega_R, \\
	-d  \varepsilon\partial_y{v_R}|_{y=0} + \nu \varepsilon v_R|_{y=0} -\varepsilon u_R=0, & x\in I_R,
	\end{array}
	\right.
	\end{equation}
	so $\varepsilon(u_R,v_R)$ is a subsolution. 
	
	Decreasing $\varepsilon$ if necessary, we have that $\varepsilon(u_R,v_R)<(u,v)$ because $u$ and $v$ are strictly positive in all points of the domain while $(u_R, v_R)$ has compact support. Now we translate $\varepsilon(u_R,v_R)$ in the variable $x$ by multiples of $\ell$; given $k\in \Z$, we call
	\begin{align*}
	u_{R, k}(x):= \varepsilon u_R(x-k\ell), \quad  & I_{R,k}=(k\ell-R,k\ell+R), \\
	v_{R, k}(x,y):=\varepsilon v_R(x-k\ell,y), \quad  & \Omega_{R,k}= B_R(k\ell, 0) \cap \Omega.
	\end{align*}
	The couple $(u_{R,k}, v_{R,k})$ is still a subsolution to system \eqref{ch1sys:fieldroad} because is a translation of a subsolution by multiple of the periodicity of the coefficients in the equations.
	Suppose by the absurd that there exists $k\in \Z$ such that $(u_{R,k}, v_{R,k})\not < (u,v)$. 
	Since $u$ and $v$ are strictly positive in all points of respectively $\R$ and $\Omega$, while $u_{R,k}$ and $v_{R,k}$ have compact support, by decreasing $\varepsilon$ if necessary, we have that $(u_{R,k}, v_{R,k})\leq (u,v)$ and either  there exists $\bar{x}\in I_{R,k}$ such that $ u_{R,k}(\bar{x})=u(\bar{x})$ or there exists $(\bar{x}, \bar{y})\in \overline{\Omega}_{R,k} $ such that $v_{R,k}(\bar{x}, \bar{y})=v(\bar{x}, \bar{y})$. 
	Then, by the Comparison Principle, we have that $(u_{R,k}, v_{R,k})\equiv (u,v)$, which is absurd because $u_{R,k}$ and $v_{R,k}$ are compactly supported.
	Therefore, we have
	\begin{equation}\label{ch11811}
	\begin{array}{rl}
	u(x) > \varepsilon u_R(x+k\ell), &\quad \forall x\in\R, \ \forall k\in\Z, \\
	v(x,y) > \varepsilon v_R(x+k\ell,y), &\quad \forall (x,y)\in\Omega,  \ \forall k\in\Z.
	\end{array}
	\end{equation}
	Fix $Y<\sqrt{R^2-\ell^2}$. Then, let us call
	\begin{equation*}
	\delta_Y := \min\{ \underset{[0, \ell]}{\min} \, \varepsilon u_R(x),  \underset{[0,\ell]\times[0,Y] }{\min} \varepsilon v_R(x,y) \}. 
	\end{equation*}
	Since $[0,\ell]\times(0,Y) \subset \Omega_R$ and $[0,\ell]\subset I_R$,  we have that $\delta_Y>0$.
	Then,  \eqref{ch11811} implies that 
	\begin{equation}\label{ch11749}
	\begin{array}{ll}
	u(x)>\delta_Y, & \text{for} \ x\in \R, \\
	v(x,y)>\delta_Y, & \text{for} \ x\in \R, \ y\in[0,Y].
	\end{array}
	\end{equation}

	\emph{Step 2: sliding in $y$.}
	Recall that by Corollary \ref{ch1thm:ineq2} we have $\lambda_1( \Omega)<0$ implies $\lambda_1(-\mathcal{L},\Omega) <0$ and by Proposition \eqref{ch1prop:-3} it holds  $\lambda_p(-\mathcal{L}, \R^2)\leq \lambda_1(-\mathcal{L},\Omega)<0$. By Proposition \ref{ch1prop:Lp_Omega}
	and by \eqref{ch1eq:-2} we have that for some $r>0$ we have $\lambda_p(-\mathcal{L},\R\times (-r,r))<0$. Then, let us call $v_r$ the eigenfunction related to $\lambda_p(-\mathcal{L},\R\times  (-r,r))$ extended to 0 outside its support; repeating the classic argument, one has that for some $\theta>0$ we have $\theta v_r$ extended to 0 outside $\R\times  (-r,r)$ is a subsolution for the second equation in system \eqref{ch1sys:fieldroad}.
	For all $h>0$, let us now call $\varphi_h(x,y):=v_r(x, y+h)$.
	Since $v_r$ is periodic in the variable $x$, we have that $v_r$ is uniformly bounded. 
	Now take $Y>2r$ and $h_0>r$ such that $Y>h_0+r$; by decreasing $\theta$ if necessary, we get that $\theta v_r < \delta_Y$. 
	Hence, we get 
	\begin{equation}\label{ch11756}
	\theta \varphi_{h_0}(x,y) < v(x,y) \quad \text{for} \ x\in\R, \ y\geq 0.
	\end{equation}
	
	Now define
	\begin{equation*}
	h^*= \sup \{ h\geq h_0 \ : \  \theta \varphi_h(x,y) < v(x,y) \ \text{for} \ x\in\R, \ y\in[h-r, h+r] \}.
	\end{equation*}
	By \eqref{ch11756}, we get that $h^* \geq h_0>r$.
	
	We now take $\tilde{y}<h^*+r$ and define
	\begin{equation*}
	\tilde{h} = \left\{
	\begin{array}{ll}
	\tilde{y}, & \text{if} \ \tilde{y}\leq  h^*, \\
	\dfrac{\tilde{y}+h^*-r}{2}, & \text{if} \ h^* < \tilde{y} < h^*+r. 
	\end{array}
	\right.
	\end{equation*}
	Then, $\tilde{h}<h^*$: if $\tilde{h}=\tilde{y}$ it is trivial, otherwise one observe that $\tilde{y}-r < h^*$.
	Also, $\tilde{y}\in(\tilde{h}-r, \tilde{h}+r )$; in fact, that is obvious if $\tilde{h}=\tilde{y}$, otherwise we have that $\tilde{y}< h^*+r$ and
	\begin{equation*}
	\tilde{h}-r < h^*-r < \tilde{y}  <  \dfrac{\tilde{y}+h^*+r}{2} 
	= \tilde{h}+r.
	\end{equation*}
	Then, since $v_r$ and therefore $\varphi_{\tilde{h}}$ are periodic in $x$, we have that
	\begin{equation} \label{ch11606}
	v(x, \tilde{y}) > \theta \varphi_{\tilde{h}}(x, \tilde{y}) >  \underset{[0,\ell]}{\min} \, \theta \varphi_{\tilde{h}}(x, \tilde{y})>0,
	\end{equation}
	so $v(x,y)>0$ for all $y<h^*+r$, $x\in\R$ and moreover
	\begin{equation}\label{ch11759}
	v(x,y) > \theta \, \underset{[0, \ell]}{\min} \, v_r(x, 0) >0 \quad \text{for} \ x\in\R, \ y\leq h^*.
	\end{equation}
	
	Suppose by absurd that $h^*<+\infty$. Then there exists a sequence $\{h_n\}_{n\in \N}$ and a sequence $\{(x_n, y_n)\}_{n\in\N}$ with $(x_n, y_n)\in \R\times[h_n-r, h_n+r]$, such that $$\underset{n\to+\infty}{\lim} h_n=h^* \quad \text{and} \quad \underset{n\to+\infty}{\lim}   {\theta\varphi_{h_n}(x_n, y_n)-v(x_n, y_n) }=0.$$
	Up to a subsequence, $\{ y_n\}_{n\in\N} \subset [0, h^*+r] $ converges to some $\bar{y}\in[h^*-r, h^*+r]$ while  $\{ x_n\}_{n\in\N}$ either converges to some $\bar{x}\in \R$ or goes to infinity.

	For all $n\in \N$ there exists $x_n'\in[0,\ell)$ and $k\in\Z$ such that
	\begin{equation}\label{ch12040}
	x_n= x_n' + k\ell.
	\end{equation}
	Up to a subsequence,
	\begin{equation}\label{ch11744}
	x_n' \overset{n\to\infty}{\longrightarrow} x'\in[0,\ell].
	\end{equation}
	Define 
	\begin{equation*}
	(u_n(x), v_n(x, y)):=(u(x+x_n), v(x+x_n, y)).
	\end{equation*}
	Then, by Lemma \ref{ch1lemma:converg} we have that $\{(u_n,v_n)\}_{n\in\N}$ converges to some $(\tilde{u}, \tilde{v})$ such that 
	\begin{equation}\label{ch11959}
	\mbox{$(\tilde{u}(x+x'), \tilde{v}(x+x', y)$ is a bounded stationary solution to \eqref{ch1sys:fieldroad}.}
	\end{equation}

	By \eqref{ch11606}, we have
	\begin{equation}\label{ch11955}
	\tilde{v}(x,\tilde{y}) > \underset{x\in[0,\ell]}{\min} \theta \varphi_{\tilde{h}}(x, \tilde{y})>0 \quad \text{for} \ \tilde{y}<h^*+r.
	\end{equation}  
	
	We notice that if $\tilde{v}(0, \bar{y})=0$, since $\tilde{v}\geq 0$ and \eqref{ch11959} holds,
	by the maximum principle  we get $\tilde{v}\equiv 0$ in $\Omega$. But since \eqref{ch11955} holds, this is not possible and instead $\tilde{v}(0, \bar{y})>0$. Hence, $0<\tilde{v}(0, \bar{y})= \theta \varphi_{h^*}(0, \bar{y})$, so
	\begin{equation}\label{ch12050}
	\bar{y}\neq h^*\pm r.
	\end{equation}

	We have that $\theta \varphi_{h_n}$ is a subsolution for $ \mathcal{L}$ in $\R\times (h_n-r, h_n+r)$, since it is a translation of a subsolution. Moreover, thanks to the periodicity of $\varphi_{h_n}$ and the definition of $x_n'$ in \eqref{ch12040}, we have 
	\begin{equation*}
	\varphi_{h_n}(x+x_n, y)=\varphi_{h_n}(x+x_n', y).
	\end{equation*}
	It follows that the sequence $\varphi_{h_n}(x+x_n, y)$ converges to $\varphi_{h^*}(x+{x}', y)$. 
	Then, $\theta \varphi_{h^*}(x+{x}', y)$ is a subsolution for the second equation in \eqref{ch1sys:fieldroad} in $\R\times(h^*-r, h^*+r)$ and  by \eqref{ch12050} it holds $(0, \bar{y})\in \R\times(h^*-r, h^*+r)\subset \Omega$. Hence, we can apply the comparison principle to $\tilde{v}(x,y)$ and $\theta \varphi_{h^*}(x+{x}', y)$: since $\tilde{v}(0,\bar{y})=\theta\varphi_{h^*}({x}', \bar{y})$, then $\tilde{v}(x,y)\equiv\theta \varphi_{h^*}(x+{x}', y) $ in all the points of $\R\times(h^*-r, h^*+r)$. But then by continuity $\tilde{v}(x, h^*-r)=\theta \varphi_{h^*}(x+{x}', h^*-r)=0 $, which is absurd for \eqref{ch11955}. 
	Hence $h^*=+\infty$. From that and \eqref{ch11749} we have statement of the lemma.
\end{proof}

Finally, we are ready to prove existence and uniqueness of a positive bounded stationary solution to \eqref{ch1sys:fieldroad}. The existence of such couple of function is crucial to get the persistence result of Theorem \ref{ch1thm:char}.

\begin{theorem}\label{ch1lemma:pbss}	
	Suppose that $c=0$, $f$ satisfies \eqref{ch1hyp:0}-\eqref{ch1hyp:per} and that
	$\lambda_1( \Omega)<0$. Then, the following holds:
	\begin{enumerate}
		\item There exists a unique positive bounded stationary solution $(u_{\infty}, v_{\infty})$ to system \eqref{ch1sys:fieldroad}.
		\item The functions $u_{\infty}$ and $v_{\infty}$ are periodic in the variable $x$ of period $\ell$.
	\end{enumerate}
\end{theorem}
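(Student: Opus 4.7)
The plan is to combine a monotone iteration scheme between an explicit super- and subsolution to obtain existence, prove uniqueness by a sliding argument powered by the strict KPP hypothesis \eqref{ch1hyp:KPP}, and deduce periodicity from uniqueness. The main ingredients are the comparison principle of Proposition \ref{ch1prop:comparison}, the compactness results of Lemmas \ref{ch1lemma:converg} and \ref{ch1lemma:mono}, the positivity estimate of Lemma \ref{ch1lemma:pos_inf}, and the existence, guaranteed by Theorem \ref{ch1thm:eq_1_p} and Proposition \ref{ch1prop:-3}, of an $\ell$-periodic positive eigenpair $(u_p, v_p)$ associated with $\lambda_p(\Omega) = \lambda_1(\Omega) < 0$.

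For a fixed $K > M$, the constant pair $(\nu K/\mu, K)$ is an $\ell$-periodic supersolution of \eqref{ch1sys:fieldroad}, since $f(x,K) \leq 0$ by \eqref{ch1hyp:M} and the exchange terms cancel. For $\varepsilon > 0$ small, $\varepsilon(u_p, v_p)$ is an $\ell$-periodic subsolution: the road equation and the boundary relation reduce respectively to $-\lambda_p(\Omega)\varepsilon u_p \geq 0$ and $0$, while the field inequality becomes $f_v(x,0) + \lambda_p(\Omega) \leq f(x,\varepsilon v_p)/(\varepsilon v_p)$, valid uniformly on $[0,\ell]$ for $\varepsilon$ small because the right-hand side converges to $f_v(x,0)$ and $\lambda_p(\Omega) < 0$. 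After shrinking $\varepsilon$ so that $\varepsilon(u_p, v_p) \leq (\nu K/\mu, K)$, let $(u,v)(t)$ denote the solution of the parabolic problem \eqref{ch1sys:fieldroad} starting from $(\nu K/\mu, K)$. By Proposition \ref{ch1prop:comparison} applied to time-translates, $(u,v)(t)$ is non-increasing in $t$, stays $\geq \varepsilon(u_p, v_p)$, and remains $\ell$-periodic in $x$. Lemma \ref{ch1lemma:mono} then yields convergence along a sequence $t_n \to +\infty$, in $\mathcal{C}_{\mathrm{loc}}^{1,2,\alpha}$, to a bounded $\ell$-periodic stationary solution $(u_\infty, v_\infty)$ with $(u_\infty, v_\infty) \geq \varepsilon(u_p, v_p) > 0$.

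For uniqueness, let $(u_1, v_1)$ and $(u_2, v_2)$ be any two positive bounded stationary solutions, both with positive infimum by Lemma \ref{ch1lemma:pos_inf}. Set $\theta^\ast := \sup\{\theta > 0 : \theta(u_1, v_1) \leq (u_2, v_2)\} \in (0, +\infty)$; it suffices to show $\theta^\ast \geq 1$, since the symmetric argument then forces equality. Suppose by contradiction $\theta^\ast < 1$, and define $w := v_2 - \theta^\ast v_1 \geq 0$, $z := u_2 - \theta^\ast u_1 \geq 0$. Linearity of the road and boundary conditions gives $-Dz'' + \mu z - \nu w|_{y=0} = 0$ and $-d\partial_y w|_{y=0} + \nu w|_{y=0} - \mu z = 0$; combining the Lipschitz bound on $f$ with the strict KPP inequality $g := f(x, \theta^\ast v_1) - \theta^\ast f(x, v_1) > 0$ on $\Omega$, one obtains $(-d\Delta + L)w \geq g > 0$ in $\Omega$. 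If the infimum of $w$ or $z$ is attained at an interior point of $\Omega$, the strong maximum principle contradicts $g > 0$; if attained on $\{y=0\}$, Hopf's lemma together with the boundary relation forces $z$ to become negative at that point, again impossible; if not attained, Lemma \ref{ch1lemma:converg} applied to a minimising sequence yields limiting stationary pairs $(\tilde u_i, \tilde v_i)$ which still have positive infimum by Lemma \ref{ch1lemma:pos_inf}, for which the corresponding limit of $w$ attains its zero infimum at an interior point, reducing the situation to the previous cases. Hence $\inf w > 0$ and $\inf z > 0$, so $\theta^\ast$ could be strictly increased, contradicting its definition.

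Periodicity of $(u_\infty, v_\infty)$ follows at once: by \eqref{ch1hyp:per}, the $\ell$-translate $(u_\infty(\cdot+\ell), v_\infty(\cdot+\ell, \cdot))$ is another positive bounded stationary solution, hence coincides with $(u_\infty, v_\infty)$ by uniqueness. The main obstacle is the sliding argument: on the unbounded domain $\R \times \overline{\Omega}$ the touching between $\theta^\ast v_1$ and $v_2$ could in principle occur only at infinity, and here Lemma \ref{ch1lemma:converg} is essential to recover an interior touching point in a limiting configuration, with the proviso that one must verify that both the strict KPP gap $g$ and the positive infimum survive the passage to the limit so that the strong maximum principle can be applied to the limit.
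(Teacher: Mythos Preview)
Your overall strategy --- sandwich between a periodic sub- and supersolution, evolve and pass to the limit, then run a sliding argument for uniqueness --- matches the paper's. The uniqueness step is essentially the same as the paper's, modulo one imprecision noted below. The existence step, however, has a real gap.

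\textbf{The gap in existence.} You take $\varepsilon(u_p,v_p)$, with $(u_p,v_p)$ the periodic eigenpair of Proposition~\ref{ch1prop:-3}, as your subsolution. But that proposition only gives $(u_p,v_p)\in W^{2,3}_{loc}(\R)\times W^{2,3}_{loc}(\overline\Omega)$ periodic in $x$; there is no bound on $v_p$ as $y\to+\infty$, and indeed the paper elsewhere (proof of Lemma~\ref{ch1lemma:ext}) explicitly treats the possibility that $v_p$ is unbounded. If $v_p$ is unbounded, two things fail simultaneously: first, your field inequality $f_v(x,0)+\lambda_p(\Omega)\le f(x,\varepsilon v_p)/(\varepsilon v_p)$ must hold for all $(x,y)\in\Omega$, not just for $x\in[0,\ell]$, and for large $y$ the value $\varepsilon v_p(x,y)$ may exceed $M$, making the right-hand side negative while the left-hand side can be positive; second, no choice of $\varepsilon>0$ achieves $\varepsilon v_p\le K$ globally, so your ordered pair of barriers collapses. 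The paper circumvents this by building a bounded periodic subsolution by hand: it takes the truncated eigenpair $(\phi_r,\psi_r)$ of \eqref{ch1sys:r} for some finite $r$ with $\lambda_p(\R\times(0,r))<0$, extends $\psi_r$ by zero for $y>r$, and then glues it with the one-dimensional periodic eigenfunction $\psi_p$ (constant in $y$) to obtain $\underline v=\max\{\varepsilon\psi_r,\varepsilon'\psi_p\}$, which is bounded and has positive infimum. Evolving upward from this subsolution (rather than downward from the supersolution) then gives a bounded limit.

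\textbf{A minor imprecision in uniqueness.} In the ``touching at infinity'' case you invoke Lemma~\ref{ch1lemma:pos_inf} on the limiting pairs $(\tilde u_i,\tilde v_i)$. When the minimising sequence has $y_n\to+\infty$, Lemma~\ref{ch1lemma:converg} produces limits $\tilde v_i$ that solve the field equation on all of $\R^2$ \emph{without} the road coupling, so Lemma~\ref{ch1lemma:pos_inf} (which is about the full road--field system) does not literally apply. The paper handles this case directly: one obtains $\gamma:=\tilde v_2-\theta^\ast\tilde v_1\ge0$ satisfying a strict elliptic inequality on $\R^2$ with an interior zero, and the strong maximum principle alone gives the contradiction --- no appeal to a positive-infimum lemma is needed at that stage.
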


\begin{proof}
	\emph{Step 1: construction of a subsolution.}
	
	Since $\lambda_1( \Omega)<0$, by Theorem \ref{ch1thm:eq_1_p} it holds that $\lambda_p( \Omega)<0$ and moreover by Proposition \ref{ch1eq:-3}  
	there exists $r>1$ such that $\lambda_p( \R\times(0, r))<0$. Let us call $(\phi_r, \psi_r)$ the eigenfunction related to $\lambda_p( \R\times (0,r))$.
	
	We have that
	\begin{equation*}
	\underset{v\to 0^+}{\lim} \dfrac{f(x,v)}{v} = f_v(x,0),
	\end{equation*}
	so there exists $\varepsilon>0$ such that
	\begin{equation*}
	\dfrac{f(x,\varepsilon\psi_r)}{\varepsilon\psi_r} > f_v(x,0) + \lambda_p( \R\times (0,r)).
	\end{equation*}
	Then, 
	\begin{equation}\label{ch11933}
	\left\{
	\begin{array}{ll}
	-D \varepsilon \phi_r'' -c \varepsilon\phi_r' - \nu \varepsilon \psi_r\rvert_{y=0} + \varepsilon \phi_r = \lambda_p ( \R\times(0,r) ) \varepsilon\phi_r < 0,  & x\in\R,  \\
	-d \Delta \varepsilon\psi_r -c \partial_x \varepsilon\psi_r   < f(x, \varepsilon \psi_r),  & (x, y)\in \R\times (0, r), \\
	-d  \varepsilon\partial_y{\psi_r}|_{y=0} + \nu \varepsilon\psi_r|_{y=0} -\varepsilon\phi_r=0, & x\in\R,
	\end{array}
	\right.
	\end{equation}
	so $\varepsilon(\phi_r, \psi_r)$ is a subsolution to system \eqref{ch1sys:fieldroad}.

	%
	%
	%
	
	Thanks to Corollary \ref{ch1thm:ineq2}, $\lambda_1( \Omega)<0$ implies $\lambda_1(-\mathcal{L}, \R^2)<0$; then  Proposition \ref{ch1prop:-3} implies that  $\lambda_p(-\mathcal{L}, \R^2)<0$. By \eqref{ch1eq:-5}, also $\lambda_p(-\mathcal{L}, \R)<0$.
	
	Consider the periodic positive eigenfunction $\psi_p(x)$ related to $\lambda_p(-\mathcal{L}, \R)$. 
	With a slight abuse of notation, we can extend $\psi_p(x)$ in all $\R^2$ by considering constant with respect to the variable $y$.
	Repeating the same arguments as before, we can prove that for some $\theta$ the function
	$\theta \psi_p(x)$ is a subsolution for the second equation of system \eqref{ch1sys:fieldroad} in $\R^2$.
	
	Consider $\delta>0$.
	We have that $\psi_p(x)$ is limited, therefore there exists $\varepsilon'\in(0, \theta)$ such that 
	\begin{equation}\label{ch11653}
	\underset{[0,\ell]}{\max} \, \varepsilon' \psi_p(x) <\delta < \underset{\substack{[0,\ell]\times[0,r-1]}}{\min} \varepsilon\psi_r(x,y)  .
	\end{equation}
	Then,  let us define the functions
	\begin{align*}
	\underline{u}(x) & :=  \varepsilon\phi_r(x), \\
	\underline{v}(x,y) &:= \max \{  \varepsilon\psi_r(x,y)  , \, \varepsilon' \psi_p(x)\}.
	\end{align*}
	
	By \eqref{ch11653}, for $y\in(0, r-1)$ it holds that $\underline{v}(x,y)=\varepsilon\psi_r(x,y)$. Hence, we get that $(\underline{u}, \underline{v})$ is a subsolution for the first and third equation of \eqref{ch1sys:fieldroad}. Moreover, since $\varepsilon\psi_r(x,y)$ and $\varepsilon' \psi_p(x)$ are both subsolution to the second equation to \eqref{ch1sys:fieldroad}, so  the maximum between them is a generalised subsolution. Thanks to that, we can conclude that $(\underline{u}, \underline{v})$ is a generalised subsolution for the system \eqref{ch1sys:fieldroad}.
	
	Since $\phi_r$ and $\psi_p$ are periodic in $x$ and independent of $y$, we get $$\underset{\R}{\inf} \, \underline{u}(x)>0 \quad \text{and} \quad  \underset{\Omega}{\inf} \, \underline{v}(x,y) >0.$$ 
	
	So, $(\underline{u}, \underline{v})$ is a generalised subsolution for the system \eqref{ch1sys:fieldroad}, with positive infimum, and by the periodicity of $\phi_r$, $\psi_r$ and $\psi_p$, it is periodic in $x$ with period $\ell$. 
	
	\emph{Step 2: construction of a stationary solution.}
	
	Take the generalised subsolution $(\underline{u}, \underline{v})$.	We want to show that the solution $(\tilde{u}(t,x), \tilde{v}(t,x,y))$ having $(\underline{u}(x), \underline{v}(x,y))$ as initial datum is increasing in time and converge to a stationary solution.

	By the fact that $(\underline{u}, \underline{v})$ is a subsolution, at we have $(\underline{u}, \underline{v}) \leq (\tilde{u}, \tilde{v})$ for all $t\geq 0$. Hence, for all $\tau>0$,
	let us consider the solution $(z,w)$ stating at $t=\tau$ from the initial datum $(\underline{u}(x), \underline{v}(x,y))$. Then, at $t=\tau$ we have that $(\tilde{u}(\tau,x), \tilde{v}(\tau,x,y))\geq (z(\tau,x), w(\tau,x,y))$. By the comparison principle \ref{ch1prop:comparison}, we have that for all $t\geq \tau$ it holds that $(\tilde{u}(t,x), \tilde{v}(t,x,y))\geq (z(t,x), w(t,x,y))$. By the arbitrariness of $\tau$, we get that $(\tilde{u}(t,x), \tilde{v}(t,x,y))$ is increasing in time.
	
	Moreover, consider 
	\begin{equation*}
	V:= \max \left\{ M, \sup \underline{v}, \frac{\mu}{\nu} \sup \underline{u} \right\}, \quad U:= \frac{\nu}{\mu} V,
	\end{equation*}
	where $M>0$ is the threshold value defined in \eqref{ch1hyp:M}. 
	One immediatly checks that $(U,V)$ is a supersolution for the system \eqref{ch1sys:fieldroad}.
	Also, we have that $(\underline{u}(x), \underline{v}(x,y)) \leq (U,V)$, so by the comparison principle \ref{ch1prop:comparison} it holds that 
	\begin{equation*}
	(\tilde{u}(t,x), \tilde{v}(t,x,y)) \leq (U,V) \quad \text{for all} \ t>0.
	\end{equation*}
	Hence, $(\tilde{u}(t,x), \tilde{v}(t,x,y))$ is limited. 
	
	Now consider an increasing diverging sequence $\{t_n\}_{n\in N}\subset \R^+$. Then, define
	\begin{equation*}
	u_n(t,x):=\tilde{u}(t+t_n, x), \quad v_n(t,x,y):=\tilde{v}(t+t_n, x, y),
	\end{equation*}
	that is a sequence of functions. By Lemma \ref{ch1lemma:mono},  $(u_n, v_n)$ converge in $\mathcal{C}_{loc}^{1,2,\alpha}$ to a stationary bounded solution to \eqref{ch1sys:fieldroad}, that we call $(u_{\infty}, v_{\infty})$. We point out that $(u_{\infty}, v_{\infty})\not\equiv(0,0)$ since 
	\begin{equation*}
	(u_{\infty}, v_{\infty}) \geq (\underline{u}, \underline{v}) > (0,0).
	\end{equation*}
	Moreover, both functions are periodic of period $\ell$ in the variable $x$ since the initial datum is.
	
	\emph{Step 3: uniqueness.}
	
	Suppose that there exists another positive bounded stationary solution $(q,p)$ to \eqref{ch1sys:fieldroad}. Then, define
	\begin{equation*}
	k^* := \sup \left\{  k>0 \ : \ u_{\infty}(x) > k q(x) \ \forall x\in\R, \ v_{\infty}(x,y)> kp(x,y) \ \forall(x,y)\in\Omega     \right\}.
	\end{equation*}
	Since by Lemma \ref{ch1lemma:pos_inf} the functions $u_{\infty}$ and $v_{\infty}$ have positive infimum and since $p$ and $q$ are bounded, we have that $k^*>0$.
	
	We claim that 
	\begin{equation}\label{ch12337}
	k^* \geq 1.
	\end{equation}	
	
	By the definition of $k^*$, one of the following must hold: there exists
	\begin{equation}\label{ch1caso1}
	\begin{split}
	\mbox{ either a sequence $\{x_n\}_{n\in\N}\subset \R$ such that $u_{\infty}(x_n) - k^* q(x_n) \overset{n\to\infty}{\longrightarrow} 0$,}
	\end{split}	
	\end{equation}
	\begin{equation}\label{ch1caso2}
	\mbox{or a sequence $\{(x_n, y_n)\}_{n\in\N}\subset \Omega$ such that $v_{\infty}(x_n,y_n) - k^* p(x_n, y_n)  \overset{n\to\infty}{\longrightarrow} 0$.}
	\end{equation}
	There exists a sequence $\{ {x}_n' \}_{n\in\N} \subset[0,\ell)$ such that 
	\begin{equation}\label{ch11807}
	x_n-{x}_n' \in \ell \Z \quad \text{for all} \ n\in\N.
	\end{equation}
	Then, up to extraction of a converging subsequence, we have that there exists $ x'\in \R$ such that ${x}_n' \overset{n\to \infty}{\longrightarrow}  x'$.
	One can see that the sequence of couples  
	\begin{equation*}
	(q_n(x), p_n(x,y) ):= (q(x+x_n), p(x+x_n, y) )
	\end{equation*}
	is a stationary solution for \eqref{ch1sys:fieldroad} with reaction function $f(x+ {x}_n', v)$. By Lemma \ref{ch1lemma:converg}, up to a subsequence, $(q_n, p_n)$ converges in $\mathcal{C}_{loc}^{2}$  to some $(q_{\infty}, p_{\infty})$, which is a stationary solution of  \eqref{ch1sys:fieldroad} with reaction function $f(x+  x', v)$. 
	We also notice that, thanks to the periodicity of $u_{\infty}$ and $v_{\infty}$, $(u_{\infty}(x+ x'), v_{\infty}(x+ x', y))$ is also a stationary solution of  \eqref{ch1sys:fieldroad} with reaction function $f(x+  x', v)$. 
	Define the function 
	\begin{equation}
	\begin{split}
	\alpha(x)&:=u_{\infty}(x+ x') - k^*q_{\infty}(x), \\
	\beta(x)&:= v_{\infty}(x+ x', y) - k^*p_{\infty}(x,y),
	\end{split}
	\end{equation}
	and notice that $\alpha (x)\geq 0$, $\beta(x, y)\geq 0$.
	
	Now suppose that \eqref{ch1caso1} holds.
	We have that
	\begin{equation*}
	\alpha(0)=u_{\infty}( x') - k^*q_{\infty}(0)=0.
	\end{equation*}
	Moreover, $\alpha(x)$ is a solution to the equation
	\begin{equation*}
	-D \alpha''-c\alpha' -\nu \beta|_{y=0} +\mu \alpha =0.
	\end{equation*}
	By the maximum principle, we have that since $\alpha(x)$ attains its minimum in the interior of the domain then $\alpha(x)\equiv \min \alpha =0$. Then, one would have $u_{\infty}(x+ x')\equiv k^* q_{\infty}(x)$ and by the comparison principle \ref{ch1prop:comparison} we have $v_{\infty}(x+ x', y)\equiv k^*p_{\infty}(x+ x', y)$. 
	Subtracting the second equation of system \eqref{ch1sys:fieldroad} for $p_{\infty}$ from the one for $v_{\infty} $ we get
	\begin{equation}\label{ch11948}
	0=f(x+ x', v_{\infty}(x+ x',y))-k^* f(x+ x',p_{\infty}(x,y)).
	\end{equation}
	If by the absurd $k^*<1$, by the KPP hypothesis \eqref{ch1hyp:KPP} we have $k^*f(x+  x', p_{\infty}(x,y)) <f(x+  x', k^* p_{\infty}(x,y)) = f(x+ x', v_{\infty}(x+ x', y))$ and the right hand side of \eqref{ch11948} has a sign, that is absurd since the left hand side is 0. We can conclude that if we are in the case of \eqref{ch1caso1}, then \eqref{ch12337} holds.

	Suppose instead that \eqref{ch1caso2} is true. If  $\{ y_n\}_{n\in\N}$  is bounded, we define $y_n \overset{n\to \N}{\longrightarrow} y'\in\R$.
	Then,
	\begin{equation}\label{ch12007}
	\beta(0, y')= v_{\infty}( x', y')- k^* p_{\infty}(0,y' )=0.
	\end{equation}
	If by the absurd $k^*<1$, then by the  Fisher-KPP hypothesis \eqref{ch1hyp:KPP}  we have	
	\begin{equation}\label{ch12001}
	\begin{split}
	-d \Delta \beta(x,y) -c \partial_x \beta(x,y) &= f(x+ x', v_{\infty}(x+ x',y))- k^* f(x+ x', p_{\infty}(x,y)) \\
	&> f(x+ x', v_{\infty}(x+ x',y))- f(x, k^*p_{\infty}(x,y)).
	\end{split}
	\end{equation}
	Since $f$ is locally Lipschitz continuous in the second variable, one infers from \eqref{ch12001} that there exists a bounded function $b(x)$ such that
	\begin{equation}\label{ch12009}
	-d \Delta \beta -c \partial_x \beta + b \beta >0.
	\end{equation}
	Since that, $\beta \geq 0$ and by \eqref{ch12007} $\beta(0, y')=0$, if $y'>0$ we apply the strong maximum principle and we have $\beta \equiv 0$.  
	If $y'=0$, we point out that by the fact that $v_{\infty}$ and $p_{\infty}$ are solution to \eqref{ch1sys:fieldroad} it holds 
	\begin{equation*}
	d \partial_y \beta(x,0) = \nu (v_{\infty}(x+ x',0)  - k^* p_{\infty}(x,0) )  - \nu (u_{\infty}(x+ x')- k^* q_{\infty}(x)) \leq  0
	\end{equation*}
	By that, the inequality in \eqref{ch12009},  $\beta \geq 0$, $\beta(0, y')=0$,  we can apply Hopf's lemma and get again $\beta \equiv 0$.
	Then for both $y'>0$ and $y'=0$, $v_{\infty}(x+ x', y)\equiv k^*p_{\infty}(x+ x', y)$ and \eqref{ch11948} holds, but we have already saw that this is absurd. So, in the case of \eqref{ch1caso2}, if $\{ y_n\}_{n\in\N}$ is bounded, \eqref{ch12337} is true.
	
	At last, if $\{ y_n\}_{n\in\N}$ is unbounded, we define
	\begin{align*}
	V_n(x,y)&:=v_{\infty}(x+x_n, y+y_n), \\
	P_n(x,y)&:=p(x+x_n, y+y_n).
	\end{align*}
	By Lemma \ref{ch1lemma:mono}, up to subsequences, $V_n$ and $P_n$ converge in $\mathcal{C}_{loc}^{2}$  to some functions $V_{\infty}$ and $P_{\infty}$ solving
	\begin{equation*}
	- d \Delta v - c\partial_x v = f(x+ x', v) \quad \text{for} \ (x,y)\in\R^2 .
	\end{equation*}
	Moreover, if we suppose $k^*< 1$, by the Fisher-KPP hypothesis \eqref{ch1hyp:KPP} we have that
	\begin{equation*}
	k^*f(x+ x', P_{\infty})< f(x+ x', k^* P_{\infty})
	\end{equation*}
	and consequently, calling $\gamma:=V_{\infty} - k^* P_{\infty}$, we get
	\begin{equation*}
	- d \Delta\gamma - c\partial_x \gamma > f(x+ x', V_{\infty}) -  f(x+ x', k^*P_{\infty}) .
	\end{equation*}
	Once again using the local Lipschitz boundedness of $f$ in the second variable, for some bounded function $b$ we have that
	\begin{equation}\label{ch12336}
	- d \Delta\gamma - c\partial_x \gamma + b \gamma >0.
	\end{equation}
	Also, we have that
	\begin{equation*}
	\gamma(0,0)=V_{\infty}(0,0) - k^* P_{\infty} (0,0) = \underset{n\to \infty}{\lim} v_{\infty}(x_n, y_n) - k^* p(x_n, y_n)=0.
	\end{equation*}
	Since that, $\gamma \geq 0$ and \eqref{ch12336}, we  can apply the strong maximum principle and we have $\gamma \equiv 0$ in $\R^2$. Then, $V_{\infty}\equiv k^* P_{\infty}$ and 
	\begin{equation}
	0=- d \Delta\gamma - c\partial_x \gamma = f(x+ x', k^*P_{\infty}) -  k^* f(x+ x', P_{\infty}) >0,
	\end{equation}
	which is absurd. Since this was the last case to rule out, we can conclude that \eqref{ch12337} holds.
	
	From \eqref{ch12337}, we  have that 
	\begin{equation}\label{ch12346}
	(u_{\infty}, v_{\infty}) \geq (q,p).
	\end{equation}

	Now, we can repeat all the argument exchanging the role of $(u_{\infty}, v_{\infty})$ and $(q,p)$. We find 
	\begin{equation*}
	h^* := \sup \left\{  h>0 \ : \  q(x) > h u_{\infty}(x)  \ \forall x\in\R, \  p(x,y) >h  v_{\infty}(x,y) \ \forall(x,y)\in\Omega     \right\} \geq 1.
	\end{equation*}
	and
	\begin{equation*}
	(q,p) \geq  (u_{\infty}, v_{\infty}).
	\end{equation*}
	By that and \eqref{ch12346}, we have that $(u_{\infty}, v_{\infty}) \equiv (q,p)$. Hence, the uniqueness is proven.
\end{proof}

Now we are ready to give a result on the persistence of the population.
%

\begin{proof}[Proof of Theorem \ref{ch1thm:char}, part 1]

	Since $\lambda_1( \Omega)<0$, by Proposition \eqref{ch1prop:romain}, we have that there exists $R>0$ such that $\lambda_1( \Omega_R)<0$. Let us consider $(u_R, v_R)$ the eigenfunctions related to $\lambda_1( \Omega_R)<0$; then, with the argument already used in  the proof of Lemma \ref{ch1lemma:pos_inf} (precisely, in \eqref{ch12220} and \eqref{ch12221}), there exists a value $\varepsilon>0$ such that $(\varepsilon u_R, \varepsilon v_R)$ is a subsolution to \eqref{ch1sys:fieldroad} in $\Omega_R$. 
	Observe also that $u_R(x)=0$ for $x\in\partial I_R$ and $v_R(x,y)=0$ for $(x,y)\in(\partial \Omega_R)\cap\Omega$. Then, we can extend $\varepsilon u_R$ and $ \varepsilon v_R$ outside respectively $I_R$ and $\Omega_R$, obtaining the generalised subsolution $(\varepsilon u_R, \varepsilon v_R)$.
	
	Let us consider the solution $(u,v)$ issued from $(u_0, v_0)$. 
	Then, by the strong parabolic principle we have that
	\begin{equation}\label{ch12225}
	u(1, x)>0\quad \text{and} \quad  v(1,x,y)>0.
	\end{equation}
	Recall that $(u_{\infty}, v_{\infty})$ is the unique stationary solution of \eqref{ch1sys:fieldroad}, and that by Lemma \eqref{ch1lemma:pos_inf} we have
	\begin{equation}\label{ch12300}
	u_{\infty}>0\quad \text{and} \quad  v_{\infty}>0.
	\end{equation}
	By that and \eqref{ch12225},  we have that
	\begin{equation*}
	\delta:=\min\{\underset{x\in I_R}{\min} \, u(1,x), \underset{x\in I_R}{\min} \, u_{\infty}(x), 	\underset{(x,y)\in \Omega_R}{\min}  v(1,x,y), \underset{(x,y)\in \Omega_R}{\min} v_{\infty}(x,y) \} >0.
	\end{equation*}
	Without loss of generality, we can suppose 
	\begin{equation}\label{ch12301}
	\varepsilon <\delta
	\end{equation}
	and thus by \eqref{ch12225}, \eqref{ch12300}, and \eqref{ch12301}, we have
	\begin{equation}\label{ch12306}
	\begin{split}
	u_{\infty}(x) &> \varepsilon u_R(x) \quad  \text{for all}  \ x\in \R, \\
	v_{\infty}(x,y) &> \varepsilon v_R(x,y) \quad   \text{for all} \ (x,y)\in \Omega.
	\end{split}
	\end{equation}

	Now, consider the solution $(\underline{u}, \underline{v})$ issued from $(\varepsilon u_R, \varepsilon v_R)$.
	We point out that, by the comparison principle, for all $t>0$ we have
	\begin{equation}\label{ch10017}
	(\underline{u}(t,x), \underline{v}(t,x,y) \leq ({u}(t+1,x), {v}(t+1,x,y)).
	\end{equation}
	By the standard argument already used in the proof of Theorem \ref{ch1lemma:pbss}, we have that $(\underline{u}, \underline{v})$ is increasing in time and by Lemma \ref{ch1lemma:mono} it converges in $\mathcal{C}_{loc}^2$ to a stationary function $(\underline{u_{\infty}}, \underline{v_{\infty}})$ as $t$ tends to infinity. Since $(\underline{u}, \underline{v})$ is increasing in time and $(\varepsilon u_R, \varepsilon v_R)\not \equiv (0,0)$, by the strong maximum principle we have $(\underline{u_{\infty}}, \underline{v_{\infty}}) > (0,0)$. By \eqref{ch12306}, we also have 
	\begin{equation*}
	(\underline{u_{\infty}}, \underline{v_{\infty}}) \leq ({u_{\infty}}, {v_{\infty}})
	\end{equation*}
	Then, by the uniqueness of the bounded positive stationary solution proved in Theorem \ref{ch1lemma:pbss}, we have $(\underline{u_{\infty}}, \underline{v_{\infty}}) \equiv ({u_{\infty}}, {v_{\infty}})$.  
	
	Next, take  
	\begin{equation}\label{ch10008}
	V:= \max \left\{ M, \sup v_0, \frac{\mu}{\nu} \sup u_0, \sup v_{\infty}, \frac{\mu}{\nu} \sup u_{\infty}  \right\}, \quad U:= \frac{\nu}{\mu} V,
	\end{equation}
	where $M>0$ is the threshold value defined in \eqref{ch1hyp:M}.
	Making use of the hypothesis \eqref{ch1hyp:M} on $f$, one easily check that $(U, V)$ is a supersolution for \eqref{ch1sys:fieldroad}. Let us call $(\overline{u}, \overline{v})$ the solution to \eqref{ch1sys:fieldroad} issued from $(U,V)$.
	By definition, $(U,V)\geq (u_0, v_0)$, hence by the comparison principle for all $t>0$ we have
	\begin{equation}\label{ch10012}
	( u(t,x), v(t, x,y)  ) \leq (\overline{u}(t,x), \overline{v}(t, x,y) ).
	\end{equation}
	Repeating the argument used in the proof of Theorem \ref{ch1lemma:pbss}, we observe that $(\overline{u}, \overline{v})$ is decreasing in time and by Lemma \ref{ch1lemma:mono} it converges in $\mathcal{C}_{loc}^2$ to a stationary function $(\overline{u_{\infty}}, \overline{v_{\infty}})$ as $t$ tends to infinity.
	We have $(\overline{u_{\infty}}, \overline{v_{\infty}}) \leq (U,V)$, so the stationary solution is bounded. 
	Moreover, since by the definition of $(U,V)$ in \eqref{ch10008} we have $ ( {u_{\infty}},  {v_{\infty}}) \leq (U, V) $, by the comparison principle \ref{ch1prop:comparison} we get
	\begin{equation*}
	( {u_{\infty}},  {v_{\infty}}) \leq (\overline{u_{\infty}}, \overline{v_{\infty}}).
	\end{equation*}
	Since $(\overline{u_{\infty}}, \overline{v_{\infty}})$ is a bounded positive stationary solution of \eqref{ch1sys:fieldroad}, by Theorem \ref{ch1lemma:pbss} we have that $( {u_{\infty}},  {v_{\infty}}) \equiv (\overline{u_{\infty}}, \overline{v_{\infty}})$.
	
	By the comparison principle \ref{ch1prop:comparison} and by \eqref{ch10017} and \eqref{ch10012}, for all $t>1$ we have
	\begin{equation*}
	\begin{split}
	\underline{u}(t-1, x) \leq u(t,x) \leq \overline{u}(t,x) \quad \text{for all} \ x\in\R, \\
	\underline{v}(t-1, x,y) \leq v(t,x,y) \leq \overline{v}(t,x,y) \quad \text{for all} \ (x,y)\in\Omega.
	\end{split}
	\end{equation*}
	Since both $(\underline{u}, \underline{v})$ and $(\overline{u}, \overline{v})$ converge to $( {u_{\infty}},  {v_{\infty}})$ locally as $t$ tends to infinity, by the sandwich theorem we have that $(u, v)$ also does. This is precisely the statement that we wanted to prove. 
\end{proof}

\subsection{Extinction}

The first step to prove extinction is to show that there is no positive bounded stationary solution to system \eqref{ch1sys:fieldroad}, that is, the only bounded stationary solution is $(0,0)$.

\begin{lemma}\label{ch1lemma:ext}
	Suppose $c=0$ and $f$ satisfy \eqref{ch1hyp:0}-\eqref{ch1hyp:per}.
	If $\lambda_1( \Omega)\geq 0$, then there is no positive bounded stationary solution to system \eqref{ch1sys:fieldroad}.
\end{lemma}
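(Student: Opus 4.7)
The plan is to argue by contradiction through a sliding argument between the supposed positive bounded stationary solution and the principal eigenfunction on a truncated half-ball. Suppose, toward a contradiction, that $(u,v)$ is a positive bounded stationary solution of \eqref{ch1sys:fieldroad}. Since $\lambda_1(\Omega) \geq 0$ and $\lambda_1(\Omega_R) \searrow \lambda_1(\Omega)$ by Proposition~\ref{ch1prop:romain}, one has $\lambda_1(\Omega_R) \geq 0$ for every $R > 0$; denote by $(u_R, v_R) > 0$ the associated positive eigenfunction of \eqref{ch1sys:halfball}. I would introduce the sliding parameter
\[
t^* := \sup\bigl\{\, t > 0 \,:\, tu_R \leq u \text{ on } \overline{I_R} \text{ and } tv_R \leq v \text{ on } \overline{\Omega_R}\,\bigr\},
\]
and check that $0 < t^* < \infty$ with the supremum attained, using that $u,v$ are strictly positive on the compact sets while $u_R, v_R$ vanish on the Dirichlet portions $\{\pm R\}$ and $\partial\Omega_R \setminus (I_R\times\{0\})$.

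Setting $w_1 := u - t^* u_R \geq 0$ and $w_2 := v - t^* v_R \geq 0$, I would exploit the stationary equations for $(u,v)$, the eigenvalue equations for $(u_R, v_R)$, and the strict KPP inequality $f(x,v) < f_v(x,0)v$ for $v > 0$ coming from \eqref{ch1hyp:KPP}. A direct computation then gives
\[
\mathcal{R}(w_1,w_2) = t^* \lambda_1(\Omega_R)\,u_R \geq 0, \qquad B(w_1, w_2) = 0,
\]
together with the \emph{strictly positive} lower bound
\[
\mathcal{L}(w_2) = \bigl[f_v(x,0)\,v - f(x,v)\bigr] + t^*\lambda_1(\Omega_R)\,v_R > 0 \quad \text{on } \Omega_R,
\]
which is the crucial ingredient and uses $v > 0$ combined with $\lambda_1(\Omega_R) \geq 0$.

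The remainder of the argument rules out every possible location of the contact point. Equality cannot be reached at $\{\pm R\}$ or on the upper part of $\partial\Omega_R$, where $u_R$ or $v_R$ vanishes but $u,v > 0$. If $w_2$ were to vanish at an interior point of $\Omega_R$, the strong maximum principle applied to the nonnegative supersolution $w_2$ of $-\mathcal{L}$ would force $w_2 \equiv 0$, contradicting the strict inequality $\mathcal{L}(w_2) > 0$. If $w_2$ were to vanish at an interior road point $(x_0, 0) \in I_R \times \{0\}$ while remaining positive in the bulk, Hopf's lemma would yield $\partial_y w_2(x_0, 0) > 0$ strictly, whereas the transmission condition $B(w_1, w_2) = 0$ combined with $w_2(x_0, 0) = 0$ and $w_1(x_0) \geq 0$ would give $d\,\partial_y w_2(x_0, 0) = -\mu w_1(x_0) \leq 0$, a contradiction. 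The only remaining possibility is that $w_1(x_0) = 0$ at an interior $x_0 \in I_R$ while $w_2 > 0$ on all of $\overline{\Omega_R}$; this case is handled by invoking the strong maximum principle for the coupled road--field system (a consequence of the Harnack estimate in Theorem~2.3 of \cite{romain} together with the transmission condition), which propagates the vanishing of the road density $w_1$ into a vanishing of the bulk density $w_2$ and forces $(w_1, w_2) \equiv (0,0)$ on $(I_R, \Omega_R)$. This is absurd since $w_1(\pm R) = u(\pm R) > 0$, closing the contradiction.

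I expect the most delicate step to be this last one, where the contact is carried entirely by $w_1$ while $w_2$ stays strictly positive: the standard scalar strong maximum principle leaves this case open, so one must genuinely use the road--field coupling through $B(w_1, w_2) = 0$ to transfer vanishing from the one-dimensional road variable to the two-dimensional field variable.
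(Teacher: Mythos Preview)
Your sliding direction is wrong, and this is a genuine gap, not a cosmetic one. You correctly derive the identity
\[
\mathcal{L}(w_2)=\bigl[f_v(x,0)\,v-f(x,v)\bigr]+t^*\lambda_1(\Omega_R)\,v_R>0\quad\text{in }\Omega_R,
\]
but this means $-d\Delta w_2-(f_v(x,0)+\lambda_1(\Omega_R))w_2<0$: the function $w_2$ is a \emph{subsolution} of the linear operator, not a supersolution. The strong maximum principle controls interior \emph{maxima} of subsolutions, not interior minima, so from $w_2\geq 0$ and $w_2(p_0)=0$ you cannot conclude $w_2\equiv 0$. A one-line counterexample in the scalar setting: $w(x)=|x|^2$ is strictly subharmonic, nonnegative, and vanishes at the origin without vanishing identically. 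The same obstruction kills your Hopf step at the road: the boundary point lemma requires the supersolution inequality, and you only have the reverse.

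The reason the sign comes out wrong is structural. When $\lambda_1(\Omega_R)\geq 0$, any positive multiple of $(u_R,v_R)$ is a \emph{supersolution} of the nonlinear problem, so sliding it from below against the solution $(u,v)$ produces no usable elliptic inequality at the contact point. The natural fix---sliding $(u_R,v_R)$ from above---is blocked because $(u_R,v_R)$ vanishes on the Dirichlet part of $\partial\Omega_R$ while $(u,v)$ stays strictly positive there, so the touching parameter would be $+\infty$. This is exactly why the paper does not use the truncated eigenfunctions at all: it builds a global supersolution out of the \emph{periodic} eigenpair $(u_p,v_p)$ for $\lambda_p(\Omega)\geq 0$, corrected by $\varepsilon\chi(y)\psi_p(x)$ (and truncated by constants when $v_p$ is unbounded) so as to obtain a strictly positive, bounded supersolution with positive infimum on all of $\R\times\overline{\Omega}$. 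Sliding \emph{that} object from above gives $\beta=k^*\tilde v-p\geq 0$ satisfying $-d\Delta\beta-b\,\beta\geq 0$, which is the correct direction for the strong maximum principle and Hopf's lemma; the contradiction then closes via the strict KPP inequality. Your bounded-domain eigenfunctions cannot play this role.
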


\begin{proof}
	\emph{Step 1: construction of a supersolution.}
	Observe that in this case, since $c=0$, by Theorem \ref{ch1thm:eq_1_p} it holds $\lambda_p(\Omega)=\lambda_1(\Omega)\geq 0$.
	We take the couple of eigenfunctions $(u_p, v_p)$ related to $\lambda_p(\Omega)$ as prescribed by Proposition \ref{ch1prop:-3}; recall that $(u_p, v_p)$ are periodic in $x$.
	Suppose $(q,p)$ is a positive bounded stationary solution to \eqref{ch1sys:fieldroad}. Then, there exists $\eta>0$ such that
	\begin{equation}\label{ch12347}
	q(0) > \eta u_p(0).
	\end{equation}
	We now choose a smooth function $\chi :  \R_{\geq 0} \to \R_{\geq 0}$  such that  $\chi(y)=0$ for $y\in[0,\ell]$, $\chi(y)=1$ for $y\in[ 2\ell, +\infty)$. 
	
	By \eqref{ch1eq:-5} and Theorem \ref{ch1thm:1.7inbr}, we have $\lambda_p(-\mathcal{L}, \R)=\lambda_p(-\mathcal{L}, \R^2)=\lambda_1(-\mathcal{L}, \R^2)$. By that, Theorem \ref{ch1thm:ineq2} and the fact that $\lambda_1(\Omega)\geq 0$, we get $\lambda_p(-\mathcal{L}, \R)\geq 0$.
	We call $\psi_p$ the eigenfunction related to $\lambda_p(-\mathcal{L}, \R)$ and, with a slight abuse of notation, we extend it to $\R^2$ by considering it constant with respect to the variable $y$.
	Take $\varepsilon>0$ to be fixed after, and define
	\begin{equation*}
	(\overline{u}(x),  \overline{v}(x,y)):= (\eta u_p(x),  \eta v_p(x,y) + \varepsilon \chi(y) \psi_p(x)).
	\end{equation*}
	Then, it holds that
	\begin{equation}\label{ch12011}
	\begin{split}
	- d \Delta \overline{v}  
	&= -d \left(\Delta \eta v_p +\varepsilon \chi''\psi_p + \varepsilon \chi \psi_p''   \right), \\
	&= \left( f_v(x,0)+\lambda_p( \Omega) \right) \eta v_p + ( f_v(x,0)+ \lambda_p(-\mathcal{L}, \R) ) \varepsilon \chi \psi_p - d \varepsilon \chi''\psi_p, \\
	& = f_v(x,0) \overline{v} + \lambda_p( \Omega) \eta v_p + \lambda_p(-\mathcal{L}, \R) \varepsilon \chi \psi_p - d \varepsilon \chi''\psi_p.
	\end{split}
	\end{equation}
	Using the KPP hypothesis \eqref{ch1hyp:KPP} and the boundedness of $\chi''$, for $\varepsilon$ small enough we have
	\begin{equation*}
	f_v(x,0) \overline{v} - d \varepsilon \chi''\psi_p > f(x, \overline{v}).
	\end{equation*}
	By that, \eqref{ch12011} and the non negativity of $\lambda_p(\Omega)$ and $\lambda_p(-\mathcal{L}, \R)$, we have 
	\begin{equation*}
	- d \Delta \overline{v} > f(x, \overline{v}).
	\end{equation*}
	This means that $\overline{v}$ is a supersolution for the second equation of \eqref{ch1sys:fieldroad}. 
	
	Since by definition for $y\leq \ell$ we have $\chi(y)=0$, it holds that
	\begin{equation}\label{ch12010}
	(\overline{u}(x),  \overline{v}(x,y))\equiv (u_p(x),   v_p(x,y)) \quad \text{for all} \ (x, y)\in \R\times (0, \ell). 
	\end{equation} 
	By the fact that $\lambda_p(\Omega  ) \geq 0$, it is easy to check that $(u_p(x),   v_p(x,y))$ is a supersolution for the first and third equation in \eqref{ch1sys:fieldroad}. By \eqref{ch12010}, the same holds for  $(\overline{u}(x),  \overline{v}(x,y))$.
	This, together with \eqref{ch12011}, gives that $(\overline{u}(x),  \overline{v}(x,y))$ is a supersolution to \eqref{ch1sys:fieldroad}.

	\emph{Step 2: construction of a bounded supersolution}
	Now we distinguish two cases.
	If $ v_p$ is bounded, then we take
	\begin{equation}\label{ch1case1super}
	(\tilde{u}, \tilde{v}):= (\bar{u}, \bar{v})
	\end{equation}
	Otherwise, we proceed as follows.
	Since in this other case $v_p$ is unbounded, and since it is periodic in $x$, this means there exists a sequence $\{(x_n, y_n)\}_{n\in\N}$ such that 
	\begin{equation}\label{ch11721b}
	v_p(x_n, y_n) \to \infty, \ y_n\to \infty \quad \text{as} \ n\to\infty.
	\end{equation}
	Now, consider 
	\begin{equation}\label{ch11418}
	V:= \max \left\{ \underset{[0,\ell]\times [0, 3\ell]}{\max} v_p +1, \ \underset{[0,\ell]}{\max} \, \frac{\nu}{\mu} u_p +1 ,  \ M       \right\},
	\end{equation}
	where $M$ is the quantity defined in \eqref{ch1hyp:M}.
	Take the set $S:=(-\ell, \ell)\times(-\ell, \ell)$ and the constant $C$ of the Harnack inequality (see Theorem 5 in Chapter 6.4 of \cite{evans}) on the set $S$ for the operator $L(\psi)=\mathcal{L}(\psi)+\lambda_1(\Omega)\psi$. Then, by \eqref{ch11721b}, for some $N\in\N$ we have  
	\begin{equation*}
	V \leq \frac{1}{C} v_p(x_N, y_N).
	\end{equation*}
	Then by using that and Harnack inequality on $v_p(x+x_N,y+y_N)$ in the set $S$, we get 
	\begin{equation*}
	V \leq \frac{1}{C} \, \underset{S}{\sup} \, v_p(x, y)
	\leq \underset{S}{\inf} \, v_p(x,y),
	\end{equation*}
	Then, using the periodicity of $v_p$, we get
	\begin{equation}\label{ch1comp1}
	V \leq v_p(x, y_N)  \quad \text{for all} \ x\in\R.
	\end{equation}
	Now, define
	\begin{equation}\label{ch1case2superv}
	\tilde{v}(x,y):= \left\{
	\begin{array}{ll}
	\min \{  V, \bar{v}(x,y) \} & \text{if} \ y \leq y_N, \\
	V & \text{if} \ y > y_N.
	\end{array}
	\right.
	\end{equation}
	Also, we define
	\begin{equation*}
	U := \frac{\nu}{\mu} V
	\end{equation*}
	and
	\begin{equation}\label{ch1case2superu}
	\tilde{u}:= \min\{ U, u_p  \}.
	\end{equation}
	By the definition of $V$ in \eqref{ch11418}, one readily checks that $(U,V)$ is a supersolution for system \eqref{ch1sys:fieldroad} and that 
	\begin{equation}\label{ch1comp2}
	\tilde{u} = u_p \quad \text{and} \quad \tilde{v}(x,0)=v_p(x,0).
	\end{equation}
	We point out that by the definition of $(\tilde{u}, \tilde{v})$, \eqref{ch1comp1} and \eqref{ch1comp2}, for any $(\underline{u}, \underline{v})$ subsolution to system \eqref{ch1sys:fieldroad}, we will be able to apply
	the generalised comparison principle, Proposition 3.3 appeared in \cite{brr}. 
	Moreover, $(\tilde{u}, \tilde{v})$ is bounded from above by $(U,V)$.

	By the fact that $(u_p,  v_p)$ is a couple of generalised periodic eigenfunctions to \eqref{ch1sys:upvp}, by the strong maximum principle we have that 
	\begin{equation}\label{ch12341}
	\begin{split}
	\tilde{u}(x) &\geq \underset{[0,\ell]}{\min} \, \eta u_p(x') >0 \quad \text{for} \ x\in\R, \\
	\tilde{v}(x,y) &\geq \underset{ [0,\ell]\times[0,2\ell]}{\min} \eta  v_p(x',y') >0 \quad \text{for} \ (x,y)\in\R\times[0, 2\ell], \\
	\tilde{v}(x,y) &\geq \min\{\underset{[0,\ell]}{\min} \, \varepsilon\psi_p(x'), V\} >0 \quad \text{for} \ (x,y)\in\R\times(2\ell, +\infty).
	\end{split}
	\end{equation}
	
	\emph{Step 3: comparison with the stationary solution.}
	Next, define
	\begin{equation*}
	k^*:= \inf \{ k\geq 0 \ : \ k( \tilde{u}(x),   \tilde{v}(x,y)) > (q,p) \ \text{for all} \ (x,y)\in\Omega       \}.
	\end{equation*}
	Since by \eqref{ch12341} we have that $ \tilde{u}(x)$ and $ \tilde{v}(x,y)$ are bounded away from $0$, and since $(q,p)$ is bounded by hypothesis, we get that $k^*<+\infty$.
	By \eqref{ch12347}, we have that
	\begin{equation}\label{ch12234}
	k^*>1.
	\end{equation}
	Then, either
	\begin{equation}\label{ch1case1}
	\mbox{there exists a sequence $\{x_n\}_{n\in\N}\subset \R$ such that $k^*  \tilde{u}(x_n) - q(x_n) \overset{n\to\infty}{\longrightarrow} 0$,}
	\end{equation}
	or
	\begin{equation}\label{ch1case2}
	\mbox{
		there exists a sequence $\{(x_n, y_n)\}_{n\in\N}\subset \Omega$ such that $k^*  \tilde{v}(x_n,y_n) -  p(x_n, y_n)  \overset{n\to\infty}{\longrightarrow} 0$.}
	\end{equation}

	As usual, for all $n\in\N$ we take $x_n'\in[0,\ell)$ such that $x_n-x_n'\in\ell \Z$. Up to a subsequence, $\{ x_n'\}_{n\in\N}$ is convergent and we call
	\begin{equation*}
	x'= \underset{n\to\infty}{\lim} x_n' \in[0,\ell].
	\end{equation*}

	\emph{Step 4: $\{y_n\}_{n\in\N}$ is bounded}.
	If $\{y_n\}_{n\in\N}$ is bounded, consider a converging subsequence and call $y'= \underset{n\to \infty}{\lim} y_n$.
	
	We define 
	\begin{equation*}
	(q_n(x), p_n(x,y)):=(q(x+x_n), p(x+x_n,y)). 
	\end{equation*}
	By Lemma \ref{ch1lemma:converg}, $(q_n, p_n)$ converges in $\mathcal{C}_{loc}^2$ to some $(q_{\infty}, p_{\infty})$ such that $(q_{\infty}(x-x'), p_{\infty}(x-x', y))$ solves \eqref{ch1sys:fieldroad}.
	Define the functions 
	\begin{align*}
	\alpha(x) &:= k^*  \tilde{u}(x)-q_{\infty}(x-x'), \\
	\beta(x,y)&: = \tilde{v}(x,y)- p_{\infty}(x-x', y).
	\end{align*}
	
	If we are in the case of \eqref{ch1case1}, then by the periodicity of $\tilde{u}$ we get
	\begin{equation*}
	\alpha(x')= k^*  \tilde{u}(x')-q_{\infty}(0)= \underset{n\to\infty}{\lim} (  k^*  \tilde{u}(x_n)- q(x_n)   )=0.
	\end{equation*}
	Moreover, by the definition of $k^*$, we have that $\alpha\geq 0$. Also, $\alpha$ satisfies
	\begin{equation*}
	-D \alpha ''  -\nu \beta|_{y=0}+ \nu \alpha \geq 0. 
	\end{equation*}
	Then, the strong maximum principle yields that, since $\alpha$ attains its minimum at $x=x'$, then $\alpha\equiv0$. Then, by the comparison principle 3.3 in \cite{brr} we have that $\beta\equiv 0$, hence 
	\begin{equation}\label{ch12226}
	0= -d \Delta \beta \geq k^*f(x,  \tilde{v}) - f (x,p_{\infty}(x-x',y)).
	\end{equation}
	By \eqref{ch12234}, we have that $k^*  \tilde{v} >  \tilde{v}$. Hence, by the Fischer-KPP hypothesis \eqref{ch1hyp:KPP}, we have that
	\begin{equation}\label{ch12249}
	\frac{f(x, k^*  \tilde{v})}{k^*  \tilde{v}} < \frac{f(x,  \tilde{v})}{ \tilde{v}}.
	\end{equation}
	Hence, again by the fact that $\beta\equiv 0$, we  have $p_{\infty}(x-x',y)\equiv  k^* \tilde{v}$; by that and by \eqref{ch12249}, it holds
	\begin{equation}\label{ch12257}
	k^* f(x,  \tilde{v})- f(x,p_{\infty}(x-x',y))=k^* f(x,  \tilde{v})- f(x, k^* \tilde{v})>0.
	\end{equation}
	But this is in contradiction with \eqref{ch12226}, hence this case cannot be possible. 
	
	If instead \eqref{ch1case2} holds, we get that 
	\begin{equation}\label{ch12256}
	\beta(x', y')= k^*  \tilde{v}(x', y')- p_{\infty}(0, y')= \underset{n\to \infty}{\lim} k^*  \tilde{v}(x_n, y_n)- p(x_n, y_n)=0.
	\end{equation}
	By the definition of $k^*$ we also have that $\beta \geq 0$. Moreover, we get that
	\begin{equation*}
	-d \Delta \beta \geq f(x,k^*  \tilde{v}) - f (x,p_{\infty}(x-x',y))
	\end{equation*}
	using the fact that $ \tilde{v}(x,y)$ is a supersolution, $p_{\infty}(x-x',y)$ is a solution, and \eqref{ch12249}.
	Since $f$ is Lipschitz in the second variable, uniformly with respect to the first one, there exists some function $b$ such that
	\begin{equation*}
	-d \Delta \beta  - b \beta \geq 0.
	\end{equation*}
	If $y'>0$, using the strong maximum principle and owing \eqref{ch12256}, we have that $\beta \equiv 0$. 
	If instead $y'=0$, recall that it also holds
	\begin{equation*}
	-d \partial_y \beta|_{y=0} \geq \mu \alpha -\nu \beta.
	\end{equation*}
	Hence, in $(x,y)=(x', y')$, we get that $\partial_y \beta(x',y') \leq 0$. By Hopf's lemma, we get again that $\beta\equiv 0$.
	
	But $\beta\equiv 0$ leads again to  \eqref{ch12226} and \eqref{ch12257}, giving an absurd, hence also this case is not possible.

	\emph{Step 5: $\{y_n\}_{n\in\N}$ is unbounded}.
	We are left with the case of $\{y_n\}_{n\in\N}$ unbounded.
	Up to a subsequence, we can suppose that $\{y_n\}_{n\in\N}$ is increasing.
	We define
	\begin{equation*}
	P_n(x,y):= p(x+x_n, y+y_n).
	\end{equation*}
	By Lemma \ref{ch1lemma:converg} we have that, up to a subsequence, $\{P_n\}_{n\in\N}$ converges in $\mathcal{C}_{loc}^{2,\alpha}(\R^2)$ to some function $P_{\infty}$  such that $P_{\infty}(x-x',y)$ is a solution  to the second equation in \eqref{ch1sys:fieldroad} in $\R^2$.

	Now we have two cases depending on how $(\tilde{u}, \tilde{v})$ was constructed. If $v_p$ is bounded, we have defined the supersolution as in \eqref{ch1case1super}. Then,
	by defining 
	\begin{equation*}
	v_n(x,y):=v_p(x+x_n, y+y_n)
	\end{equation*}
	and applying Lemma \ref{ch1lemma:converg}, we have that $v_n$ converges locally uniformly to  a bounded function $v_{p, \infty}$ such that $v_{p, \infty}(x-x',y)$ satisfies
	\begin{equation}\label{ch11630}
	-d\Delta v_{p, \infty}(x-x',y) = (f_v(x,0)+\lambda_1(\Omega))v_{p, \infty}(x-x',y).
	\end{equation}
	In this case, we define 
	\begin{equation*}
	v_{\infty}(x,y):=\eta v_{p, \infty}(x,y) + \varepsilon \psi_p(x+x').
	\end{equation*}
	We point out that $v_{\infty}(x-x',y)$ is a  periodic supersolution of the second equation in \eqref{ch1sys:fieldroad} by \eqref{ch11630} and \eqref{ch12011}.
	
	If instead $v_p$ is unbounded, by \eqref{ch1case2superu} for $y>y_N$ we have $\tilde{v}=V$. In this case, we choose
	\begin{equation*}
	v_{\infty}:=V.
	\end{equation*}
	By the definition of $V$ in \eqref{ch11418}, we have that $v_{\infty}$ is also a supersolution to \eqref{ch1sys:fieldroad}.
	
	We call $\gamma(x,y):=k^* {v}_{\infty}(x-x',y) - P_{\infty}(x-x',y)$.
	Hence, $\gamma(x,y)\geq 0$ and
	\begin{equation}\label{ch12330}
	\gamma(x', 0)=k^* {v}_{\infty}(0, 0) - P_{\infty}(0,0)= \underset{n\to\infty}{\lim} k^*  \tilde{v}(x_n,y_n) - p(x_n, y_n)=0. 
	\end{equation}
	Notice than that, since \eqref{ch12234} holds, from the Fisher-KPP hypothesis on $f$ \eqref{ch1hyp:KPP}, we get
	\begin{equation*}
	\frac{f(x,k^* {v}_{\infty} )}{k^* {v}_{\infty}} < \frac{f(x, {v}_{\infty} )}{ {v}_{\infty}}.		
	\end{equation*}
	Using that, the fact that $k^* {v}_{\infty}(x-x',y)$ is a supersolution, and the fact that $P_{\infty}(x-x',y)$ is a solution, we obtain
	\begin{equation}\label{ch12333}
	-d \Delta \gamma  > f(x,  k^*{v}_{\infty}(x-x',y)) - f (x,P_{\infty}(x-x',y)).
	\end{equation}
	Since $f$ is Lipschitz in the second variable, uniformly with respect to the first one, there exists some function $b$ such that
	\begin{equation*}
	-d \Delta \gamma  - b \gamma \geq 0.
	\end{equation*}
	Using the strong maximum principle for the case of positive functions, since \eqref{ch12330} holds, we have that $\gamma\equiv 0$. As a consequence, from  \eqref{ch12333} we have
	\begin{equation*}
	f(x,k^*  {v}_{\infty}) - f (x,P_{\infty}) <0.
	\end{equation*} 
	but it also holds that $k^*  {v}_{\infty} \equiv P_{\infty}$, hence we have an absurd.

	Having ruled out all the possible cases, we can conclude that there exists no bounded positive stationary solution $(q,p)$ to \eqref{ch1sys:fieldroad}.
\end{proof}

At last, we are ready to prove the first part of Theorem \ref{ch1thm:char}.

\begin{proof}[Proof of Theorem \ref{ch1thm:char}, part 1]
	Define $$V:=\max \left\{ M, \sup v_0, \frac{\mu}{\nu} \sup u_0  \right\} \quad \text{and} \quad U:= \frac{\nu}{\mu} V.$$
	It is easy to check that $(U, V)$ is a supersolution for \eqref{ch1sys:fieldroad}. Then take $(\overline{U}, \overline{V})$ to be the solution to \eqref{ch1sys:fieldroad} with initial datum $(U,V)$. 
	Notice that by the comparison principle 
	\begin{equation}\label{ch10013}
	(0,0)\leq (u(t,x), v(t,x,y)) \leq (\overline{U}(t,x), \overline{V}(t,x,y)) \quad \text{for all} \ t>0, \ (x,y)\in\Omega.
	\end{equation}
	
	Since $(U, V)$ is a supersolution, we have that
	\begin{equation}\label{ch12317}
	(\overline{U}, \overline{V}) \leq (U,V) \quad \text{for all} \ t\geq 0.
	\end{equation} 
	Consider $\tau>0$ and call $(\tilde{U}, \tilde{V})$ the solution staring with initial datum $(U, V)$ at $t=\tau$. By \eqref{ch12317} we have that $(\overline{U}(\tau,x), \overline{V}(\tau,x, y)) \leq (U,V)$, hence by the comparison principle \eqref{ch1prop:comparison} we have  $(\overline{U}, \overline{V}) \leq (\tilde{U}, \tilde{V})$. By the arbitrariness of $\tau$, we get that $(\overline{U}, \overline{V})$ is decreasing in $t$.
	
	By Lemma \ref{ch1lemma:mono},  $(\overline{U}, \overline{V})$ converges locally uniformly to a stationary solution $(q,p)$. But by Lemma \ref{ch1lemma:ext}, the only stationary solution is $(0,0)$.
	By that and \eqref{ch10013}, we have that $(u(t,x), v(t,x,y))$ converges locally uniformly to $(0,0)$ as $t$ goes to infinity.
	
	Moreover, since $(U,V)$ is constant in $x$, and \eqref{ch1sys:fieldroad} is periodic in x, $(\overline{U}, \overline{V})$ is periodic in $x$.
	Hence, the convergence is uniform in $x$.

	Now suppose by the absurd that the convergence is not uniform in $y$; hence there exists some $\varepsilon>0$ such that for infinitely many $t_n\geq 0$, with $\{t_n\}_{n\in\N}$ an increasing sequence, and $( x_n, y_n)\in\Omega$, it holds  
	\begin{equation}\label{ch12327}
	\overline{V}(t_n,  x_n, y_n) >\varepsilon.
	\end{equation}
	Since $\overline{V}$ is periodic in $x$, without loss of generality we can suppose $x_n\in[0,\ell]$ and that up to a subsequence $\{x_n\}_{n\in\N}$ converges to some $x'\in[0,\ell]$. If $\{y_n\}_{n\in\N}$ were bounded, by \eqref{ch12327} the local convergence to $0$ would be contradicted; hence $y_n$ is unbounded.
	
	Then, define the sequence of functions
	\begin{equation*}
	V_n(t,x,y)=\overline{V}(t, x+x_n, y+y_n).
	\end{equation*}
	By \eqref{ch12327}, we have that
	\begin{equation}\label{ch10109}
	V_n(t_n,0,0)>\varepsilon \quad \text{for all} \ n\in\N.
	\end{equation}
	
	Also, since $V_n$ is bounded, by arguments similar to the ones used in Lemma \ref{ch1lemma:converg} and Lemma \ref{ch1lemma:mono}
	one can prove that, up to a subsequence, 
	$\{V_n\}_{n\in\N}$ converges in $\mathcal{C}_{loc}^2(\R^2)$ to a function $\tilde{V}$  that   solves
	\begin{equation}\label{ch10108}
	\partial_t \tilde{V} - d\Delta \tilde{V}  = f(x+x', \tilde{V}).
	\end{equation}
	Also by \eqref{ch10109}, we have that
	\begin{equation}\label{ch10110}
	\tilde{V}(t_n,0,0 )>\varepsilon \quad \text{for all} \ n\in\N.
	\end{equation}
	Recall that by the fact that $\lambda_1(\Omega)\geq 0$, Corollary \ref{ch1thm:ineq2} and Theorem \ref{ch1thm:1.7inbr}, $\lambda_p(-\mathcal{L}, \R^2)\geq 0$. Then by Theorem \ref{ch1thm:2.6inbhroques} we have that every solution to \eqref{ch10108} converges uniformly to $0$. But this is in contradiction with \eqref{ch10110}, hence we have an absurd and we must refuse the existence such positive $\varepsilon$. So, the convergence of $\overline{V}$ to $0$ is uniform in space.
	As a consequence, the convergence of $(u(t,x), v(t,x,y))$ to $(0,0)$ is uniform in space.
\end{proof}

\end{document}